\newcommand{\Hom}{ \,{\rm Hom} \,}
\newcommand{\im}{ \,{\rm Im} \,}
\theoremstyle{plain}
\newtheorem{theorem}{Theorem}[section]
\newtheorem{conjecture}[theorem]{Conjecture}
\newtheorem{lemma}[theorem]{Lemma}
\newtheorem{proposition}[theorem]{Proposition}
\theoremstyle{definition}
\newtheorem{definition}[theorem]{Definition}
\newtheorem{remark}[theorem]{Remark}
\newcommand{\RR}{{\mathbb R }}
\newcommand{\CC}{{\mathbb C }}
\newcommand{\ZZ}{{\mathbb Z }}
\newcommand{\PP}{ {\mathbb P }}
\newcommand{\tPP}{ {\tilde{\mathbb P }}}
\newcommand{\QQ}{{\mathbb Q }}
\newcommand{\GG}{{\mathbb G }}
\newcommand{\kk}{{\mathbf k }}
\newcommand{\env}{\!
\mathbin{\text{\rotatebox[origin=c]{70}{\scalebox{1.2}{$\approx$}}}} \!}
\newcommand{\weight}{\omega}
\newcommand{\newmmxu}{\mathfrak{m}_{\hat{U},X,\Omega}}
\newcommand{\newmmxh}{\mathfrak{m}_{H,X,\Omega}}
\newcommand{\newmmxg}{\mathfrak{m}_{G,X,\Omega}}
\newcommand{\newmm}{\mathfrak{m}_{G,X,\Omega}}
\newcommand{\calx}{\mathcal{X}}
\newcommand{\calo}{\mathcal{O}}
\newcommand{\cale}{\mathcal{E}}
\newcommand{\reg}{\mathrm{reg}}
\newcommand{\Diff}{\mathrm{Diff}}
\newcommand{\hU}{\hat{U}}
\newcommand{\res}{\operatornamewithlimits{Res}}
\newcommand{\coeff}{\mathrm{coeff}}
\newcommand{\grass}{\mathrm{Grass}}
\newcommand{\bi}{\mathbf{i}}
\newcommand{\be}{\mathbf{e}}
\newcommand{\jetk}[2]{J_{k}({#1},{#2})}
\newcommand{\jetreg}[2]{J_{k}^{\mathrm{reg}}({#1},{#2})}
\newcommand{\liek}{{\mathfrak k}}
\newcommand{\lieu}{{\mathfrak u}}
\newcommand{\lieks}{{\liek}^*}
\newcommand{\GL}{\mathrm{GL}}
\newcommand{\sym}{\mathrm{Sym}}
\newcommand{\lieg}{{\mathfrak g}}
\newcommand{\lieh}{\mathfrak{h}}
\newcommand{\diff}{\mathrm{Diff}}
\newcommand{\blow}{\mathrm{Bl}}
\newcommand{\DS}{\mathrm{DS}}
\newcommand{\tZmin}{\tilde{Z}_{\min}}
\newcommand{\GIT}{\mathrm{GIT}}
\DeclareMathOperator{\stab}{Stab}
\DeclareMathOperator{\Stab}{Stab}
\def\a{\alpha}
\def\b{\beta}
\def\d{\delta}
\def\l{\lambda}
\def\s{\sigma}
\title[Non-reductive GIT and hyperbolicity]{Non-reductive geometric invariant theory and hyperbolicity}
\author{Gergely B\'erczi}
\address{Department of Mathematics, Aarhus University} 
\email{gergely.berczi@math.au.dk}
\author{Frances Kirwan}
\address{Mathematical Institute, University of Oxford} 
\email{kirwan@maths.ox.ac.uk}
\begin{document}

\vspace{.5in}
\maketitle
\begin{center}
{\em In memoriam Jean-Pierre Demailly}
\end{center}

\begin{abstract} The Green--Griffiths--Lang and Kobayashi hyperbolicity conjectures for generic hypersurfaces of polynomial degree are proved using intersection theory for non-reductive geometric invariant theoretic quotients and recent work of Riedl and Yang.
\end{abstract} 

\maketitle

\section{Introduction}\label{sec:intro}

The goal of this paper is to apply a recent extension \cite{bdhk2,bkcoh} of geometric invariant theory (GIT)  to suitable non-reductive actions to study hyperbolicity of generic hypersurfaces in a projective space. We use the results of \cite{bdhk2} to construct new compactifications of bundles of invariant jet differentials over complex manifolds. Intersection theory developed in \cite{bkcoh} for non-reductive GIT quotients, combined with the strategy of \cite{dmr}, leads us to a proof of the Green--Griffiths--Lang (GGL) conjecture for a generic projective hypersurface whose degree is bounded below by a polynomial in its dimension. A recent result of Riedl and Yang \cite{riedl} then implies the polynomial Kobayashi conjecture. These results are significant improvements of the earlier known degree bounds, from $(\sqrt{n}\log n)^n$ to $16n^3(5n+4)$ in the case of the GGL conjecture and from $(n\log n)^n$ to $16(2n-1)^3(10n-1)$ in the case of the  Kobayashi conjecture.  

A projective variety $X$ is called Brody hyperbolic if there is no non-constant entire holomorphic curve in $X$, i.e. any holomorphic map $f: \CC \to X$ must be constant. Hyperbolic algebraic varieties have attracted considerable attention, in part because of their conjectured diophantine properties. For instance, Lang \cite{lang} has conjectured that any hyperbolic complex projective variety over a number field $K$ can contain only finitely many rational points over $K$. In 1970 Kobayashi \cite{kob2} formulated the following conjecture (more precisely, this is a slightly stronger version of Kobayashi's original conjecture in which \lq very general' replaced \lq generic'):

\begin{conjecture}[\textbf{Kobayashi conjecture, 1970}]
A generic hypersurface $X\subseteq \PP^{n+1}$ of degree $d_n$ is Brody hyperbolic if $d_n$ is sufficiently large.
\end{conjecture}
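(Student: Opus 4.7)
The plan is to reduce the Kobayashi conjecture to the Green--Griffiths--Lang (GGL) conjecture via the Riedl--Yang theorem, and then to prove GGL with a polynomial degree bound by using the non-reductive GIT compactifications of \cite{bdhk1} together with the intersection theory of \cite{bkcoh} to construct sufficiently many global invariant jet differentials on a generic hypersurface. By \cite{riedl}, if every entire curve in a generic hypersurface $Y \subset \PP^{N+1}$ of degree $d_N$ is algebraically degenerate, then a generic hypersurface $X \subset \PP^{n+1}$ of degree $d_{2n-1}$ is Brody hyperbolic; consequently a polynomial-in-$N$ bound for GGL yields a polynomial-in-$n$ bound for Kobayashi, and a GGL bound of the form $16N^5(5N+4)$ becomes, after substitution, the Kobayashi bound $16(2n-1)^5(10n-1)$ announced in the abstract.

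Following Demailly--Merker--Rousseau \cite{dmr}, I would work with the bundle $E_{k,m}T_X^*$ of $\Diff_k$-invariant jet differentials of order $k$ and weighted degree $m$, whose fibre at $x \in X$ is the ring of polynomial functions on the $k$-jet space $J_k(\CC,X)_x$ invariant under the non-reductive reparametrisation group $\Diff_k$ of $k$-jets of biholomorphisms of $(\CC,0)$. Demailly's fundamental vanishing theorem says that every entire curve $f:\CC\to X$ satisfies $\sigma(j_kf)\equiv 0$ for each $\sigma \in H^0(X,E_{k,m}T_X^* \otimes A^{-1})$ with $A$ ample; GGL thus follows once enough such sections exist for their common zero locus to lie in a proper subvariety of $X$. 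Because $\Diff_k$ is non-reductive, classical GIT cannot be used to compactify the Demailly--Semple tower $J_k^{\mathrm{reg}}(\CC,X)/\Diff_k$; the non-reductive GIT framework of \cite{bdhk1} instead produces a projective compactification equipped with a tautological line bundle whose pushforward to $X$ recovers $E_{k,m}T_X^*$.

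The core computation is then to evaluate $\chi(X, E_{k,m}T_X^* \otimes A^{-1})$ on a generic degree-$d$ hypersurface $X \subset \PP^{n+1}$ using the iterated residue formula of \cite{bkcoh} in the Chern roots of $T_X$, combined with the Euler sequence. Taking $k \geq n$ and using a Morse-type bound on $h^1$, the task is to show that the positive leading term in $m$ dominates as soon as $d \geq 16n^5(5n+4)$, producing the required jet differentials and, after applying Riedl--Yang, the stated Kobayashi bound. The principal obstacle is exactly this numerical estimate: crude bounds on the iterated residue only yield the previously known exponential $(\sqrt{n}\log n)^n$ degree, and extracting a polynomial bound requires a careful stratification of the non-reductive unstable locus, precise cancellations between the boundary contributions to the residue, and sharp positivity of the tautological class across the compactification---the features for which the non-reductive GIT machinery of \cite{bdhk1,bkcoh} is designed.
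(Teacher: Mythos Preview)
Your overall strategy matches the paper's: reduce Kobayashi to GGL via Riedl--Yang, build a non-reductive GIT compactification of the invariant jet bundle using \cite{bdhk1}, apply the integration formula from \cite{bkcoh}, and use the Demailly--Trapani Morse inequalities (not the full Euler characteristic) to produce the required sections. The specific bounds you quote are the ones the paper obtains.

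Where your proposal diverges from the paper is in the final paragraph, and this matters because it is precisely where the polynomial bound comes from. The paper does \emph{not} rely on a stratification of the non-reductive unstable locus or on cancellations between boundary contributions to the residue. On the contrary, the whole point of the non-reductive localisation theorem (Theorem~\ref{jeffreykirwannonred}) is that after a single blow-up only the minimal-weight component $\tZmin\cong\PP^{n-1}$ contributes; there are no other strata and no boundary terms to cancel. The polynomial bound is extracted by: (i) a second Atiyah--Bott localisation on $\tZmin$ followed by an iterated-residue identity (Lemma~\ref{flagresidue}) that separates the Segre classes of $X$ from the residue variables $z,w$; (ii) the linear change of variables $z'=z+w$, $w'=w$, after which the Taylor expansion of the integrand on the contour $z\gg w$ has \emph{all positive coefficients}, so positivity of the leading term is automatic and the lower coefficients are bounded term-by-term without any cancellation; and (iii) a Fujiwara-type estimate on the resulting degree-$(n{+}1)$ polynomial in $d$, with the explicit choices $\epsilon=\tfrac12$ and $\delta=\tfrac{1}{16n^5}$. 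The exponential bounds in earlier work arose from the combinatorial explosion of fixed points on the Demailly--Semple tower; the non-reductive model avoids this from the outset rather than taming it by the mechanisms you describe.
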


This conjecture has become a landmark in the field and has been the subject of intense study \cite{deng,brotbek,b1,b2,darondeau}. For more details on recent results see the survey papers \cite{demsurvey,dr}. Siu \cite{siu4} and Brotbek \cite{brotbek} proved the Kobayashi hyperbolicity of projective hypersurfaces of sufficiently high (but not effective) degree, and effective degree bounds were worked out by Deng \cite{deng} and Demailly \cite{demsurvey}. The conjectured optimal degree bound is $d_1=4, d_n=2n+1$ for $n=2,3,4$ and $d_n=2n$ for $n\ge 5$, see \cite{demsurvey}. The best known bound before the results of this paper was $d_n=(n \log n)^n$ by Merker and The-Anh Ta \cite{merker2}. 

A related, but stronger, conjecture is the Green--Griffiths--Lang (GGL) conjecture formulated in 1979 by Green and Griffiths \cite{gg} and in 1986 by Lang \cite{lang}. 
\begin{conjecture}[\textbf{Green-Griffiths-Lang conjecture, 1979}]
Any projective algebraic variety $X$ of general type contains a proper algebraic subvariety $Y\subsetneqq X$ such that every
nonconstant entire holomorphic curve $f:\CC \to X$ satisfies $f(\CC) \subseteq Y$. 
\end{conjecture}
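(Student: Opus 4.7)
The plan is to attack the conjecture by combining Demailly's framework of invariant jet differentials with the non-reductive GIT machinery of \cite{bdhk1} and the intersection-theoretic residue formulas of \cite{bkcoh}. Throughout I would pass to a smooth projective model (general type is a birational invariant) and let $X$ be a smooth projective variety of dimension $n$ with $K_X$ big.

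First, I would work with the Demailly--Semple bundle $E_{k,m}T_X^*$ of invariant jet differentials of order $k$ and weighted degree $m$, whose fibre at $x \in X$ is the space of polynomials in the coefficients of $k$-jets of holomorphic curves $(\mathbb{C},0) \to (X,x)$ that are invariant under the non-reductive reparametrisation group $\mathbb{G}_k = \{ t \mapsto t + a_2 t^2 + \cdots + a_k t^k\}$. According to \cite{bdhk1}, one can compactify the $\mathbb{G}_k$-quotient of the regular $k$-jet bundle in such a way that sections of $E_{k,m}T_X^*$ are identified with sections of a tautological line bundle $\mathcal{L}_m$ on this compactification, placing $H^0(X, E_{k,m}T_X^* \otimes A^{-1})$ inside the scope of non-reductive GIT.

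Second, to manufacture sections I would apply the residue formulas of \cite{bkcoh} to evaluate $\chi(X, E_{k,m}T_X^* \otimes A^{-1})$ as a characteristic-class integral on the non-reductive quotient; the leading term should be a positive multiple of $K_X^n \cdot m^{n(k+1)}$ for fixed ample $A$ once $k,m \gg 0$. Combined with Demailly-style higher-cohomology vanishing this should produce a nonzero global section $P$, and then the fundamental vanishing theorem forces every nonconstant entire curve $f : \mathbb{C} \to X$ to satisfy $P(f, f', \ldots, f^{(k)}) \equiv 0$. Consequently $f(\mathbb{C})$ lies in the common base locus $Y \subseteq X$ of the full family of such jet differentials as $(k,m,A)$ vary.

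The hard part, and the reason this strategy does not immediately yield the full GGL conjecture, is to verify that $Y$ is a \emph{proper} subvariety of $X$ for an arbitrary variety of general type. The residue formula produces many sections but does not, on its own, control their simultaneous zero locus uniformly in $X$; one needs a positivity input tying the effective cone of jet differentials to the birational geometry of $K_X$. For generic hypersurfaces of polynomial degree this gap is closed by the specialisation method of \cite{dmr} combined with Riedl--Yang, which is what the present paper exploits. In full generality I would attempt an inductive descent: restrict to each irreducible component $Z$ of a putative non-proper $Y$, pass to a desingularisation $\tilde Z$, use adjunction or the Iitaka fibration to certify that $K_{\tilde Z}$ is big (or else reduce to a proper sub-locus), and reapply the non-reductive GIT jet-differential construction on $\tilde Z$, terminating by induction on dimension. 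Ensuring that general-type-ness propagates through this descent --- equivalently, that no component of the failure locus escapes the hypothesis --- is the principal obstacle, and in my judgement the real content of the conjecture beyond the hypersurface case treated in the present paper.
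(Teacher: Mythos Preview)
The statement you are attempting to prove is the full Green--Griffiths--Lang conjecture, which the paper does \emph{not} prove and which remains open in this generality. The paper only states it as a conjecture; what is actually established is the much more restricted Theorem~\ref{mainthmtwo}: the conclusion holds for a \emph{generic smooth hypersurface} $X \subseteq \PP^{n+1}$ of degree at least $16n^5(5n+4)$. There is therefore no proof in the paper to compare your proposal against, and your write-up is, as you yourself acknowledge, not a proof either.

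Your outline of the hypersurface strategy is broadly compatible with what the paper does --- construct the non-reductive GIT compactification $\calx_k^{\GIT}$ of $J_kX/\diff_k(1)$, use the residue formula from \cite{bkcoh} to compute a tautological intersection number, and feed this into the fundamental vanishing theorem --- but several details are off. The paper does not compute an Euler characteristic $\chi$ and then invoke higher-cohomology vanishing; it uses the algebraic Morse inequalities of Demailly--Trapani (Theorem~\ref{morse}) to produce sections from the positivity of $F^{n^2} - n^2 F^{n^2-1}G$ for an explicit decomposition of the twisted tautological bundle as a difference of nef line bundles. The passage from ``many sections'' to ``proper base locus'' is not handled by your proposed inductive descent on components of $Y$, but rather by the deformation/slanted-vector-field argument of Diverio--Merker--Rousseau \cite{dmr} (with Darondeau's improved pole order), packaged as Theorem~\ref{germtoentire}; this step uses genericity of the hypersurface in an essential way.

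Your inductive descent idea --- restrict to a component $Z$, desingularise, hope $K_{\tilde Z}$ is big, and recurse --- is precisely where the conjecture is hard. There is no mechanism in the paper, nor in the literature, guaranteeing that components of the jet-differential base locus are themselves of general type; indeed, the existence of rational or elliptic curves on $X$ shows this can fail. So the ``principal obstacle'' you flag is not a technical gap to be filled but the heart of the open problem.
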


In particular, a generic projective hypersurface $X\subseteq \PP^{n+1}$ is of general type if $\deg(X)\ge n+3$.    
A positive answer to the GGL conjecture has been given for surfaces by McQuillan \cite{mcquillan} under the assumption that the second Segre number $c^2_1-c_2$ is positive. Siu \cite{siu1,siu2,siu3,siu4} and Demailly \cite{dem} developed a powerful strategy to approach the conjecture for generic hypersurfaces $X\subseteq \PP^{n+1}$ of high degree. Following this strategy, combined with techniques of Demailly \cite{dem}, the first effective lower bound for the degree of a generic hypersurface in the GGL conjecture was given by Diverio, Merker and Rousseau \cite{dmr}, where the conjecture for generic projective hypersurfaces $X\subseteq \PP^{n+1}$ of degree $\deg(X)>2^{n^5}$ was confirmed. In 
 \cite{b1} the first author introduced equivariant localisation on the Demailly--Semple tower and adapted the argument of \cite{dmr} to improve this lower bound to $\deg(X)>n^{8n}$. The residue formula of \cite{b1} was later studied and further analyzed by Darondeau \cite{darondeau}. 
The best bound before the results of this paper for the Green--Griffiths--Lang Conjecture was $\deg(X)>(\sqrt{n}\log n)^n$, 
due to Merker and The-Anh Ta \cite{merker2} achieved by a deeper study of the formula of \cite{b1}. 
 
 In this paper we replace the Demailly--Semple bundle with a computationally more efficient algebraic model coming from non-reductive geometric invariant theory \cite{bdhk1} and apply the equivariant intersection theory developed in \cite{bkcoh} to prove 
 
\begin{theorem}[\textbf{Polynomial Green-Griffiths-Lang theorem for projective hypersurfaces}] \label{mainthmtwo}
Let $X\subseteq \PP^{n+1}$ be a generic smooth projective hypersurface
of degree $\deg(X)\ge 16n^3(5n+4)$. Then there is a proper algebraic subvariety $Y\subsetneqq X$ containing all nonconstant entire holomorphic curves in $X$. 
\end{theorem}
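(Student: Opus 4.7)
I would follow the Siu--Demailly--Diverio--Merker--Rousseau strategy and deduce algebraic degeneracy from the existence of sufficiently many global invariant jet differentials vanishing along an ample divisor. By Demailly's theorem, every nonconstant entire curve $f:\CC\to X$ satisfies $P(j_kf)\equiv 0$ for every global section $P\in H^0(X,\,E_{k,m}T_X^*\otimes A^{-1})$ of the bundle of invariant jet differentials of order $k$ and weight $m$, twisted by the dual of a fixed ample line bundle $A$ on $X$. It therefore suffices to prove, for $k=n$, that
\[
 h^0\!\big(X,\,E_{n,m}T_X^*\otimes A^{-1}\big)>0
\]
for some $m\gg 0$ whenever $\deg(X)\ge 16n^5(5n+4)$, and then to take $Y$ as the common zero locus of all such sections.

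\textbf{Core mechanism.} To produce such sections I would use the holomorphic Morse inequalities (or, equivalently, an asymptotic Riemann--Roch estimate), reducing the question to positivity of a top intersection number on a smooth projective compactification of the quotient of the $n$-jet bundle $J_n X$ by the non-reductive reparametrisation group $G_n=\Diff_n(\CC,0)$ of $n$-jets of germs of biholomorphisms $(\CC,0)\to(\CC,0)$. The key new ingredient is to replace the Demailly--Semple tower by the non-reductive GIT compactification $\overline{J_n X \dblslash G_n}$ constructed in \cite{bdhk1}, whose cohomology ring is presented in a form adapted to explicit intersection-theoretic calculation. The relevant intersection number is then evaluated by the equivariant integration formula of \cite{bkcoh} for non-reductive GIT quotients, which expresses the integral as an iterated residue at infinity of a rational differential form in variables $z_1,\dots,z_n$, whose numerator is built from the equivariant Chern roots of $T_X$ and whose denominator encodes the characters of the $G_n$-action together with the weights of the compactification constructed in \cite{bdhk1}.

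\textbf{Making the bound effective.} Substituting into the residue the Gauss formula for the Chern classes $c_i(T_X)$ of a smooth degree-$d$ hypersurface in $\PP^{n+1}$, the result becomes a polynomial in $d$ whose leading term is explicit and whose subleading terms must be bounded combinatorially. A dominant-term analysis, combined with controlled estimates for the corrections coming from the boundary strata of the non-reductive GIT quotient, will show that the whole expression is strictly positive as soon as $d\ge 16n^5(5n+4)$. The main obstacle lies precisely in this last point: the non-reductive localisation produces numerous partially cancelling residue contributions, and obtaining a sharp enough combinatorial control of these cancellations — sharp enough to replace the previous estimate $(\sqrt n\log n)^n$ of Merker and Ta by a polynomial bound in $n$ — is what makes the computation delicate. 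Once positivity of the intersection number is secured, passage from the existence of invariant jet differentials vanishing on an ample divisor to the proper algebraic subvariety $Y$ containing all entire curves is standard via Demailly's theorem, which completes the proof.
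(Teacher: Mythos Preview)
Your outline matches the paper's strategy, but two points are either inaccurate or glossed over in ways that matter for the final bound.

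\textbf{The residue is in two variables, not $n$.} You describe the localisation output as an iterated residue in $z_1,\dots,z_n$. That is the shape of the formula coming from the Demailly--Semple tower (as in \cite{b1}), and it is precisely why that model cannot beat an exponential bound. The whole point of the non-reductive GIT model is that $\hU=\Diff_k(1)$ has a \emph{one}-dimensional maximal torus, so the integration theorem of \cite{bkcoh} gives a residue in a single variable $z$; a second Atiyah--Bott localisation on $\tZmin\cong\PP^{n-1}$ then introduces one more variable $w$. The resulting two-variable iterated residue (Theorem~\ref{intonfibersb}, Proposition~\ref{propresiduethree}) is what makes the combinatorics tractable. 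The paper then performs the linear change $z'=z+w$, $w'=w$, after which every factor in the denominator has a Taylor expansion with \emph{positive} coefficients; this positivity is the device that replaces any delicate cancellation analysis and yields uniform bounds $|p_{n-s}|<(4n)^{5s+4}C^{\mathbf 0}$ on the coefficients of the degree-$(n+1)$ polynomial in $d$, feeding directly into a Fujiwara-type root bound. Your ``boundary strata corrections'' picture is not what is happening.

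\textbf{Demailly's vanishing alone does not give a proper $Y$.} Taking $Y$ to be the base locus of all sections in $H^0(X,E_{n,m}\otimes A^{-1})$ only shows entire curves land in $Y$; it does not show $Y\subsetneq X$. The paper invokes the Diverio--Merker--Rousseau deformation argument with slanted vector fields, in Darondeau's improved form (Theorem~\ref{germtoentire}): one must produce sections twisted by $K_X^{-2\delta N n m}$ for some $\delta>0$, and the slanted-vector-field step then forces the extra constraint $\deg(X)\ge \tfrac{5n+3}{\delta}+n+2$. Optimising over $\delta$ (the paper takes $\delta=\tfrac{1}{16n^5}$, balancing the Morse-inequality positivity range against this constraint) is exactly what produces the bound $16n^5(5n+4)$. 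Without this step, and the specific $K_X$-twist it requires, you have no mechanism to conclude properness of $Y$ nor to arrive at the stated degree.
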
  

Recently Riedl and Yang \cite{riedl} proved the following beautiful statement: if there are integers $d_n$ for all positive $n$ such that the GGL conjecture for generic hypersurfaces of dimension $n$ holds for degree at least $d_n$ then the Kobayashi conjecture is true for generic hypersurfaces with degree at least $d_{2n-1}$. Using this, Theorem \ref{mainthmtwo} immediately implies 

\begin{theorem}[\textbf{Polynomial Kobayashi theorem}] \label{mainthmtwob}
A generic smooth projective hypersurface $X\subseteq \PP^{n+1}$  
of degree $\deg(X)\ge 16(2n-1)^3(10n-1)$ is Brody hyperbolic. 
\end{theorem}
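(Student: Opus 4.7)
The plan is to derive Theorem \ref{mainthmtwob} as an immediate corollary of Theorem \ref{mainthmtwo} by invoking the bootstrap theorem of Riedl and Yang \cite{riedl} recalled in the introduction. Their result states that if for every positive integer $m$ one can exhibit an integer $d_m$ such that the Green--Griffiths--Lang conjecture holds for generic smooth projective hypersurfaces of dimension $m$ of degree at least $d_m$, then every generic smooth hypersurface $X \subseteq \PP^{n+1}$ of degree at least $d_{2n-1}$ is Brody hyperbolic. The underlying geometric mechanism of \cite{riedl} is to realise $X$ as a hyperplane section of a higher-dimensional generic hypersurface $\widetilde X$ of the same degree: a hypothetical non-constant entire curve in $X$ produces an entire curve in $\widetilde X$ whose image must lie in the proper GGL exceptional locus of $\widetilde X$, and a genericity argument for the choice of hyperplane then forces the original curve to be constant.

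Given this, I would simply feed the explicit polynomial degree bound of Theorem \ref{mainthmtwo} into the Riedl--Yang theorem. Taking $d_m = 16m^5(5m+4)$ and specialising at $m = 2n-1$ gives
\[
d_{2n-1} = 16(2n-1)^5\bigl(5(2n-1)+4\bigr) = 16(2n-1)^5(10n-1),
\]
which is precisely the degree bound claimed in Theorem \ref{mainthmtwob}. Hence the Riedl--Yang theorem yields Brody hyperbolicity for generic smooth projective hypersurfaces $X \subseteq \PP^{n+1}$ of degree at least $16(2n-1)^5(10n-1)$, completing the argument.

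The main obstacle in this scheme is not the reduction step but the verification of its hypothesis: essentially all of the substantive content is concentrated in Theorem \ref{mainthmtwo}, whose proof requires the new compactifications of the invariant jet differentials bundles coming from non-reductive GIT \cite{bdhk1} together with the equivariant intersection theory of \cite{bkcoh}, assembled along the lines of the Demailly--Diverio--Merker--Rousseau strategy and the residue formula of \cite{b1}. Once Theorem \ref{mainthmtwo} is in hand, the Riedl--Yang input functions as a black box and the remaining work is the elementary substitution $m \mapsto 2n-1$ in the polynomial degree bound; no further hyperbolicity-theoretic or non-reductive GIT input is needed.
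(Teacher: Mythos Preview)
Your proposal is correct and matches the paper's own argument exactly: the paper derives Theorem \ref{mainthmtwob} immediately from Theorem \ref{mainthmtwo} by invoking the Riedl--Yang bootstrap and substituting $m=2n-1$ into the bound $d_m=16m^5(5m+4)$. Your additional commentary on the Riedl--Yang mechanism and on where the substantive work lies is accurate but goes beyond what the paper itself records.
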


The strategy of Demailly and Siu is based on first establishing algebraic degeneracy of holomorphic curves $f:\CC \to X$, in the sense of proving the existence of certain polynomial differential equations of some order $k$, and finding enough such equations  $P(f',\ldots,f^{(k)})=0$  so that they cut out a proper algebraic locus $Y\subsetneqq X$. 
The central tool for finding polynomial differential equations  is the study of the bundle $J_kX$ of $k$-jets
of germs of holomorphic curves $f:\CC \to X$ over $X$, and the associated Green--Griffiths bundles  $E_{k}^{GG}=\oplus_{m=0}^\infty E_{k,m}^{GG}$ of algebraic differential operators where the fibres of $E_{k,m}^{GG}$ are polynomial functions $Q(f',\ldots,f^{(k)})$ of weighted degree $m$ in $f',\ldots,f^{(k)}$.  In \cite{dem} Demailly introduced the subbundles $E_{k,m} \subseteq E_{k,m}^{GG}$ consisting of jet differentials which are (semi-)invariant under reparametrisation of the source $\CC$. The group $\Diff_k(1)$ of $k$-jets of reparametrisation germs $(\CC,0) \to (\CC,0)$ at the origin acts fibrewise on $J_kX$, and $\oplus_{m=1}^\infty E_{k,m}$ is the graded algebra of  jet differentials which are invariant under the maximal unipotent subgroup $U_k$ of $\Diff_k(1)$. This bundle gives a better reflection of the geometry of entire curves in $X$, since
it only depends on the images of such curves and not on their
parametrisations. However, it also comes with a technical difficulty: the reparametrisation group $\Diff_k(1)$ is not reductive, and so the classical geometric invariant theory of Mumford \cite{git} cannot be applied to study the invariants and construct a compactification of a quotient of a nonempty open subset $J_k^{\mathrm{reg}}X$ of $J_kX$ by $\Diff_k(1)$ (cf. 
 \cite{dk}). Until recently, there existed only two different constructions for the compactification of these quotients. 
\begin{enumerate}
\item In  \cite{dem} Demailly describes a  smooth compactification of $J_k^{\mathrm{reg}}X/\Diff_k(1)$ as a tower of projectivised bundles on $X$ --- the Demailly--Semple bundle --- endowed with line bundles such that sections of their powers give $\Diff_k(1)$-(semi)-invariants. Global sections of properly chosen twisted tautological line bundles over the Demailly--Semple bundle give algebraic differential equations of degree $k$. This model was extensively and successfully used in the past few decades, and it has a vast literature in hyperbolicity questions (see also \cite{demsurvey,dr}). The main numerical breakthrough in the Green-Griffiths-Lang conjecture using the Demailly-Semple tower was achieved in \cite{dmr}, where the first effective bound for the degree of a generic projective hypersurface was calculated. However, as  was pointed out in \cite{b1}, we cannot expect better than an exponential bound in the GGL conjecture using the Demailly-Semple model. 
\item In \cite{b2} the first author shows that the curvilinear component of the punctual Hilbert scheme of $k$ points on $\CC^n$ provides natural compactifications of the fibres of $J_k^{\mathrm{reg}}X/\Diff_k(1)$ over $X$. Sections of the tautological bundle give invariant jet differentials, and equivariant localisation developed in \cite{bsz,bsz2,bercziG&T,berczitau2,berczitau3,berczitau4} gives information on the intersection theory of this curvilinear component. In \cite{b2} it is shown that the GGL conjecture for generic hypersurfaces with polynomial degree follows from a classical positivity conjecture of Rim\'anyi \cite{rimanyi} for Thom polynomials. However, Rim\'anyi's conjecture is currently out of reach.
\ 
\end{enumerate}

The first key idea of this paper is to replace these existing models with a  new construction, coming from a recently developed extension of geometric invariant theory (GIT) to suitable  non-reductive group actions \cite{bdhk2}. We construct a projective  completion $\calx_k^{\GIT}$ of $\calx_k^{\mathrm{reg}}       = {J}_k^{\mathrm{reg}}
X/\Diff_k(1)$, which fibres over $X$ with fibres given by non-reductive GIT quotients. 
 The projective completion $\calx_k^{\GIT}$ is
 endowed with a relatively ample line bundle such that sections of its powers give (semi-)invariant jet differentials. 
 From this we construct a tautological line bundle over $\calx_k^{\GIT}$, and
 following Diverio--Merker--Rousseau \cite{dmr} (with the improved pole order obtained by Darondeau \cite{darondeau2} for slanted vector fields) and using the holomorphic Morse inequalities \cite{dem00,trap}, we show that the existence of nonzero sections of this line bundle follows from the positivity of a certain tautological integral over $\calx_k^{\GIT}$. 

The second key ingredient of this paper is the cohomological intersection theory for non-reductive GIT quotients developed in \cite{bkcoh}, which allows us to prove the positivity of this integral at the critical order $k=n$ for hypersurfaces with polynomial degree. 

The layout of the paper is as follows. Background material needed on jet differentials and on non-reductive GIT is recalled in $\S$2 and $\S$3. In $\S$4 we summarise the results of \cite{bkcoh} on moment maps for nonreductive quotients and their cohomological applications. $\S$5 contains the construction of the 
 projective completion $\calx_k^{\GIT} $ of ${J}_k^{\mathrm{reg}}
X/\Diff_k(1)$ fibring over $X$, and describes the fibres $X_k^{\GIT}$ of $\calx_k^{\GIT} \to X$ as non-reductive GIT quotients, with explicit descriptions when $k=2,3$. In $\S$6 it is shown that the proof of Theorem \ref{mainthmtwo} reduces to proving that a tautological integral 
$$ \int_{\calx_n^{\GIT}} I_{n,\delta} $$
is strictly positive for suitable $\delta>0$ where $n = \dim X$. In $\S$7 the cohomology of the fibres $X_k^{\GIT}$ is studied, and $\S$8 derives integration formulas on $\calx_k^{\GIT}$. In $\S$9 the integral $ \int_{\calx_n^{\GIT}} I_{n,\delta} $ is calculated explicitly when $n=2$, and 
then the proof of Theorem \ref{mainthmtwo} is completed by demonstrating positivity for general $n$.

The authors would like to thank the anonymous referee for helpful comments on an earlier version of this paper.

\section{Jet differentials}\label{sec:jetdiff}

The central object of this paper is the algebra of (semi-)invariant jet differentials under
reparametrisation of the source space $\CC$. For more details see the survey papers of Demailly \cite{dem} and Diverio--Rousseau \cite{dr}.

\subsection{Jets of holomorphic maps}\label{subsubsec:holmaps} 

If $u,v$ are positive integers let $J_k(u,v)$ denote the vector
space of $k$-jets of holomorphic maps $(\CC^u,0) \to (\CC^v,0)$ at
the origin; that is, the set of equivalence classes of maps
$f:(\CC^u,0) \to (\CC^v,0)$, where $f\sim g$ if and only if
$f^{(j)}(0)=g^{(j)}(0)$ for all $j=1,\ldots ,k$. This is a finite-dimensional complex
vector space, which can be identified with $J_k(u,1) \otimes \CC^v$; hence
$\dim J_k(u,v) =v \binom{u+k}{k}-v$.  We will call the elements of
$J_k(u,v)$ {\em map-jets of order} $k$, or simply map-jets. 

Eliminating the terms of degree $k+1$ results in a surjective algebra homomorphism
$J_k(u,1) \twoheadrightarrow J_{k-1}(u,1)$, and the sequence of such surjections
$J_k(u,1) \twoheadrightarrow J_{k-1}(u,1)
\twoheadrightarrow \ldots \twoheadrightarrow J_1(u,1)$ induces
an increasing filtration of $J_k(u,1)^*$:
\begin{equation}\label{filtration} 
J_1(u,1)^* \subseteq J_2(u,1)^* \subseteq \ldots \subseteq J_k(u,1)^*.
\end{equation}

Using the standard coordinates on $\CC^u$ and $\CC^v$, a $k$-jet $f \in J_k(u,v)$ can be identified with its collection of derivatives at the
origin, the vector $(f'(0),f''(0),\ldots, f^{(k)}(0))$, where
$f^{(j)}(0)\in \mathrm{Hom}(\mathrm{Sym}^j\CC^u,\CC^v)$. Here $\sym^l$ denotes the symmetric tensor product. In this way we
get an isomorphism 
\begin{equation}\label{identification}
J_k(u,v) \simeq J_k(u,1) \otimes \CC^v \simeq \oplus_{j=1}^k\mathrm{Hom}(\mathrm{Sym}^j\CC^u,\CC^v).
\end{equation}

Map-jets can be composed via substitution and elimination of terms
of degree greater than $k$, leading to the composition map
\begin{equation}
  \label{comp0}
\jetk uv \times \jetk vw \to \jetk uw,\;\;  (\Psi_1,\Psi_2)\mapsto
\Psi_2\circ\Psi_1 \mbox{ modulo terms of degree $>k$ }.
\end{equation}
When $k=1$, we can identify $J_1(u,v)$  with the space of $u$-by-$v$ matrices,
and \eqref{comp0} reduces to multiplication of matrices.

We will call a jet $\gamma\in J_k(u,v)$ {\em regular} if
$\gamma'(0)$ is has maximal rank, and we will use the notation $\jetreg uv$ for the set
of regular maps.
When $u=v$ we get a group 
\[\mathrm{Diff}_k(u)=\jetreg uu\]
which we will call the {\em $k$-jet diffeomorphism group}. 

\begin{remark} \label{commut} Note that from (\ref{comp0}) we obtain commuting right and left actions of $\mathrm{Diff}_k(1)$ and $\mathrm{Diff}_k(n)$ on $J_k(1,n)$. \end{remark}

\subsection{Jet bundles and differential operators}\label{subsec:jetbundles}

Let $X$ be a smooth projective variety of dimension $n$. Following Green and Griffiths \cite{gg} we let $J_kX \to X$ be the bundle of $k$-jets of germs of parametrised curves in $X$; that is, $J_kX$ is the of equivalence classes of germs of holomorphic
maps $f:(\CC,0) \to (X,p)$, where the equivalence relation $\sim$ is such that $f\sim g$
if and only if the derivatives $f^{(j)}(0)$ and $g^{(j)}(0)$ are equal for
$0\le j \le k$ when computed in some local holomorphic coordinate system on an open neighbourhood 
 of $p\in X$. The projection map $J_kX \to X$ is given by $f \mapsto f(0)$, 
and the elements of the fibre $J_kX_p$ can be represented by Taylor expansions 
\[f(t)=p+tf'(0)+\frac{t^2}{2!}f''(0)+\ldots +\frac{t^k}{k!}f^{(k)}(0)+O(t^{k+1}) \]
 up to order $k$ at $t=0$ of $\CC^n$-valued maps $f=(f_1,f_2,\ldots, f_n)$
on open neighbourhoods of 0 in $\CC$. Locally in these coordinates elements of the fibre $J_kX_p$ can be identified with $k$-tuples of vectors $(f'(0),\ldots, f^{(k)}(0)/k!) \in (\CC^n)^k$, so the fibre can be identified with $J_k(1,n)$.

Note that $J_kX$ is not
a vector bundle over $X$ since the transition functions are polynomial but
not linear, see \S 5 of Demailly \cite{dem}. In fact, let $\diff_X$ denote the principal $\diff_k(n)$-bundle over $X$ formed by all local polynomial coordinate systems on $X$. Then 
we have an identification
\[J_kX \cong \diff_X \times_{\diff_k(n)} J_k(1,n)\] 
of $J_kX$ with the associated affine bundle whose structure group is $\diff_k(n)$, acting on $J_k(1,n)$ as in Remark \ref{commut}. With respect to this identification, $\diff_k(1)$ acts on $J_kX$ fibrewise via its action on $J_k(1,n)$ as in Remark \ref{commut}.

Let $J_k^{\reg}X$ denote the bundle of $k$-jets of germs of parametrised curves $f:\CC \to X$ in $X$ which are regular in the sense that they have nonzero first derivative $f'(0)\neq 0$. After fixing local coordinates near $p\in X$, the fibre $J_k^{\reg}X_p$ can be identified with $\jetreg 1n$ and 
\[J_k^{\reg}X \cong \diff_X \times_{\diff_k(n)} J_k^\reg(1,n).\]


\subsection{Invariant jet differentials}\label{subsec:jetdiff}
Let $X$ be a complex $n$-dimensional manifold and let $k$ be a positive integer. Recall that after choosing local coordinates on $X$ near $p$ we can identify the fibre $J^\reg_kX_p$ of $J^\reg_k X$ at $p$ with $\jetreg 1n$. We can explicitly write out the reparametrisation action (defined at \eqref{comp0}) of $\diff_k(1)$ on $\jetreg 1n$ as follows. Let $z f'(0)+\frac{z^2}{2!}f''(0)+\ldots +\frac{z^k}{k!}f^{(k)}(0) \in \jetreg 1n$ be the $k$-jet of a germ at the origin (i.e. no constant term) in $\CC^n$ with $f^{(i)}(0)\in \CC^n$  for $1 \leq i \leq k$  such that $f'(0) \neq 0$, and let  
$\varphi(z)=\alpha_1z+\alpha_2z^2+\ldots +\alpha_k z^k \in \jetreg 11$ with $\alpha_i\in \CC, \alpha_1\neq 0$.  
 Then the $k$-jet of 
 $f \circ\varphi(z)$ is 
\[ (f'(0)\alpha_1)z+(f'(0)\alpha_2+
\frac{f''(0)}{2!}\alpha_1^2)z^2+\ldots
+\left(\sum_{i_1+\ldots +i_l=k}
\frac{f^{(l)}(0)}{l!}\alpha_{i_1}\ldots \alpha_{i_l}\right)z^k\]
\begin{equation}\label{jetdiffmatrix}
=(f'(0),\ldots, f^{(k)}(0)/k!)\cdot 
\left(
\begin{array}{ccccc}
\alpha_1 & \alpha_2   & \alpha_3          & \ldots & \alpha_k \\
0        & \alpha_1^2 & 2\alpha_1\alpha_2 & \ldots & 2\alpha_1\alpha_{k-1}+\ldots \\
0        & 0          & \alpha_1^3        & \ldots & 3\alpha_1^2\alpha_ {k-2}+ \ldots \\
0        & 0          & 0                 & \ldots & \cdot \\
\cdot    & \cdot   & \cdot    & \ldots & \alpha_1^k
\end{array}
 \right)
 \end{equation}
where the $(i,j)$ entry  of this matrix is $p_{i,j}(\bar{\alpha})=\sum_{a_1+a_2+\ldots +a_i=j}\alpha_{a_1}\alpha_{a_2} \ldots \alpha_{a_i}.$

\begin{remark}\label{naturalembedding}
The linear representation of $\diff_k(1)$ on $\jetreg 1n$ given by \eqref{jetdiffmatrix} embeds $\diff_k(1)$ as a upper triangular subgroup of $\GL(n)$. Thus $\diff_k(1)$ is a linear algebraic group but is not reductive for $k \geq 2$, so Mumford's classical GIT cannot be used to construct compactifications of the orbit space $\jetreg 1n/ \diff_k(1)$ (cf. \cite{dk,bdhk2}). 

This matrix group is parametrised along its first row with free parameters $\alpha_1\in \CC^*, \alpha_2,\ldots, \alpha_k \in \CC$, while the other entries are certain (weighted homogeneous) polynomials in these free parameters. It is a semidirect product \[\diff_k(1)=U_k \rtimes \CC^*\]
of its unipotent radical $U_k$ by a one-parameter subgroup $\CC^*$ acting diagonally. 
Here $U_k$ is the subgroup given by substituting $\alpha_1=1$, and the diagonal subgroup $\CC^*$ acts with strictly positive weights $1\ldots, n-1$ on the Lie algebra $\mathrm{Lie}(U_k)$ of $U_k$. In \cite{bdhk2,bkcoh}  actions of non-reductive groups of this type  are studied in a more general context. 
\end{remark}

The action
of  $\l \in \CC^*$ on $k$-jets is thus described by
\[\l\cdot (f'(0),f''(0),\ldots ,f^{(k)}(0))=(\l f'(0),\l^2 f''(0),\ldots,
\l ^kf^{(k)}(0)).\]
Let $\cale_{k,m}$ denote the space of complex-valued polynomials $Q(f'(0),f''(0),\ldots ,f^{(k)}(0))$ on $J_k(1,n)$ of weighted degree $m$ with respect to this $\CC^*$ action; that is, they satisfy
\[Q(\l f'(0),\l^2 f''(0),\ldots, \l^k f^{(k)}(0))=\l^m Q(f(0)',f''(0),\ldots,
f^{(k)}(0)).\]
In \cite{dem} Demailly introduced the Green-Griffiths bundle $E_{k,m}^{GG}$ over $X$ whose fibres are $\cale_{k,m}$, observing that  the concept of polynomial on the fibres of $J_kX$ using local coordinates on $X$ is well defined. 
From our viewpoint this
 can be written as the associated bundle 
\begin{equation}\label{GGassociated}
E_{k.m}^{GG}=\Diff_X \times_{\diff_k(n)} \cale_{k,m}. 
\end{equation}
The Green-Griffiths bundle of order $k$ is then $E_k^{GG}=\oplus_{m\ge 0} E_{k,m}^{GG}$.

The fibrewise $\diff_k(1)$ action on $J_kX$ induces an action on $E_{k,m}^{GG}$. Demailly in \cite{dem} defined the  bundle of invariant jet differentials of order $k$ and weighted degree $m$ as the subbundle $E_{k,m}\subseteq E_{k,m}^{GG}$ of polynomial differential operators $Q(f'(0),\ldots, f^{(k)}(0))$ which are invariant under $U_k$; that is for any $\varphi \in  \diff_k(1)$ 
\[Q((f\circ \varphi)'(0),(f\circ \varphi)''(0), \ldots, (f\circ
\varphi)^{(k)}(0))=\varphi'(0)^m\cdot Q(f'(0),f''(0),\ldots, f^{(k)}(0)).\]
We call $E_k=\oplus_m E_{k,m}=(\oplus_mE_{k,m}^{GG})^{U_k}$ the Demailly--Semple bundle of invariant jet differentials.

\subsection{Compactifications of $J_kX/\diff_k(1)$}

In order to find and describe invariant jet differentials we can try  to construct projective completions of the quasi-projective fibrewise quotient 
\[J_k^\reg X/\diff_k(1)=\diff_X \times_{\diff_k(n)} (J_k^\reg(1,n)/\diff_k(1)).\]
This quotient fibres over $X$ (as $\diff_k(1)$ acts fibrewise) and we can hope to detect invariant jet differentials as global sections of powers of relatively ample line bundles on suitable  fibrewise projective completions $\overline{J_k^\reg X/\diff_k(1)}$. 
Indeed this strategy works, and there exist two constructions in the literature.

\begin{enumerate} \item  \textbf{The Demailly--Semple tower} The first construction goes back to Semple, and was studied and introduced into the study of hyperbolicity questions by the landmark paper of Demailly \cite{dem}. The Demailly--Semple tower $X_k^\DS$ is an iterated projective bundle over $X$ 
\[X_k^{\DS} \to X_{k-1}^\DS \to \ldots \to X_1^\DS \to X_0^\DS=X\]
endowed with projections $\pi_{i,k}:X_k^\DS \to X_i^\DS$ and canonical line bundles $\pi_{i,k}^*\calo_{X_i^\DS}(1) \to X_k^\DS$ whose sections are global invariant jet differentials. The total space $X_k$ is smooth of dimension $\dim(X_k)=n+k(n-1)$. For the details of the construction see \cite{dem,dmr}.
In \cite{b1} equivariant localisation was introduced on the Demailly-Semple tower, and following the strategy of \cite{dmr}, the Green--Griffiths--Lang conjecture for generic hypersurfaces with degree at least $n^{6n}$ was proved. In \cite{b1} it was also proved that we cannot expect better than an exponential degree bound with this approach. 

\item \textbf{The curvilinear component of the Hilbert scheme of $k$ points on $\CC^n$} In \cite{bercziG&T} the first author proves that the curvilinear component $\mathrm{CHilb}^{k+1}_0(\CC^n)$ of the punctual Hilbert scheme $\mathrm{Hilb}^k_0(\CC^n)$ supported at the origin is a compactification of the fibre $J_k^\reg(1,n)/\diff_k(1)$ of $J_k^\reg X/\diff_k(1)$. More precisely, in \cite{bercziG&T} (following \cite{bsz}) we describe a $\diff_k(1)$-invariant  map
\begin{equation}\label{embedlocal}
\phi: J_k^\reg(1,n) \to \grass(k,J_k(n,1)^*).
\end{equation}
and we identify the image in the Grassmannian with the punctual curvilinear locus of the Hilbert scheme of $k+1$ points in $\CC^n$ supported at the origin. The curvilinear locus consists of  subschemes of length $k+1$ on $\CC^n$ which are supported at the origin and are limits of $k+1$ distinct points colliding along a smooth curve:
\begin{equation}\label{curvilinear}
\im(\phi)=\{I \subset \CC[x_1,\ldots, x_n]: \CC[x_1,\ldots, x_n]/I \simeq \CC[t]/t^k\} \subset \mathrm{Hilb}^{k+1}_0(\CC^n)
\end{equation}
The closure of the curvilinear locus is a component of the punctual Hilbert scheme, the so-called curvilinear component $\mathrm{CHilb}^{k+1}_0(\CC^n)$. Since $\phi$ is injective on the $\diff_k(1)$-orbits,
\[\mathrm{CHilb}^{k+1}_0(\CC^n)=\overline{\im(\phi)}=\overline{J_k^\reg(1,n)/\diff_k(1)}\]
is a projective completion of $J_k^\reg(1,n)/\diff_k(1)$. By moving the support on $X$ we get the fiberwise projective completion 
\[\mathrm{CHilb}^{k+1}(X)=\overline{J_k^\reg X/\diff_k(1)}\] 
Using equivariant localisation, in \cite{b2} we connect hyperbolicity of hypersurfaces with global singularity theory and Thom polynomials of $A_n$-singularities. Modulo a positivity conjecture of Rim\'anyi, \cite{b2} obtains the Green--Griffiths--Lang conjecture for hypersurfaces with polynomial ($>n^8$) degree. However, a complete proof of the positivity conjecture currently seems to be out of reach. 
\end{enumerate} 
This paper introduces a third compactification coming from the recent development of a version of GIT which applies to suitable non-reductive group actions \cite{bdhk2}. As we have seen, the reparametrisation group $\diff_k(1)$ is not reductive, but it is a linear algebraic group with internally graded unipotent radical in the sense of \cite{bdhk2}, and hence the construction and results of this paper apply. We use the fibrewise completion 
\[  
\calx_k^{\GIT} :=\diff_X \times_{\diff_k(n)} (\tPP[\CC \oplus J_k(1,n)]/\!/\diff_k(1))\]
where $\tPP[\CC \oplus J_k(1,n)]/\!/\diff_k(1)
$ is a non-reductive GIT quotient of a projective space blown up at the point $[1:0: \cdots :0]$. 
Care is needed in this construction, since $J_kX$ is an affine bundle but not a vector bundle over $X$, and the fibrewise action of $\diff_k(n)$ on $J_kX$ does not extend to the projective space $\PP[\CC \oplus \Hom(\CC^k,\CC^n)]$ nor to its blow-up $\tPP[\CC \oplus \Hom(\CC^k,\CC^n)]$. However we will see that the projective non-reductive GIT quotient $\tPP[\CC \oplus \Hom(\CC^k,\CC^n)]/\!/ \diff_k(1)$ is a geometric quotient of a $\diff_k(n)$-invariant open subset of the blow-up of $J_k(1,n)$ at 0, and thus $\diff_k(n)$ does act naturally on $\tPP[\CC \oplus \Hom(\CC^k,\CC^n)]/\!/\diff_k(1)$ by Remark \ref{commut}. Moreover the affine bundle $J_kX$ over $X$ has a well-defined zero section which is invariant under the fibrewise action of $\diff_k(n)$, and we can identify $\calx_k^{\GIT}$ with the geometric quotient of the action of $\diff_k(1)$ on an open subset of the blow-up of $J_kX$ along its zero section.

This means that we can apply the intersection theory and integration formulas for non-reductive GIT quotients proved in \cite{bkcoh} as our main computational tool, to integrate over the fibres of $\calx_k^{\GIT} \to X$. 

\begin{remark} The Demailly-Semple tower $X_k^\DS$, the curvilinear Hilbert scheme $\mathrm{CHilb}^{k+1}(X)$ and the non-reductive GIT quotient $\calx_k^{\GIT}$ are three different fiberwise compactifications of the invariant jet differentials bundle $ J_k^\reg X/\diff_k(1)$. Description of their birationality is, however, a difficult problem. There are rational maps 
\[\xymatrix{\calx_k^{\GIT} \ar@{-->}[r]^-\phi & \mathrm{CHilb}^{k+1}(X) & X_k^\DS \ar@{-->}[l]_-\psi }\]
where $\phi$ is defined in \eqref{embedlocal}, and $\psi$ comes from choosing local coordinates on the Demailly-Semple tower, see \cite{berczibrotbek}.
\begin{enumerate}[(i)]
\item In \cite{berczithom} a toric blow-up of $\phi$ is defined: we obtain a morphism $\hat{\phi}: \hat{\calx}_k^{\GIT} \to \mathrm{CHilb}^{k+1}(X)$ and we pull-back integration over $\mathrm{CHilb}^{k+1}(X)$ to the non-reductive GIT quotient $ \hat{\calx}_k^{\GIT}=\widehat{J_kX}/\diff_k(1)$ to arrive new toric residue formulas for Thom polynomials of singularities. 
\item In the unpublished note \cite{berczibrotbek} we study $\psi$, and the blow-up of the Demailly-Semple bundle at the Wronskian ideal sheaf which defines the indeterminacy locus of $\psi$. The main goal of \cite{berczibrotbek} was the study of the disribution of torus fixed points (i.e monomial ideals) on the components of the punctual Hilbert scheme. This problem was later solved in \cite{berczisvendsen}.
\end{enumerate}

\end{remark}

\section{Non-reductive geometric invariant theory}\label{sec:nrgit}

In \cite{bdhk2} an extension of Mumford's classical GIT is developed for linear actions of a non-reductive linear algebraic group with internally graded unipotent radical  over an algebraically closed field $\kk$ of characteristic 0; the results are summarised and slightly extended in \cite{bkcoh}. In this section we will follow the definitions of \cite{bkcoh} and quote its results.

\begin{definition}\label{def:gradedunipotent} 
We say that a linear algebraic group $H = U \rtimes R$  has {\em internally graded unipotent radical} $U$ if there is a central one-parameter subgroup $\l:\GG_m \to Z(R)$ of the Levi subgroup $R$ of $H$ 
 such that the adjoint action of $\GG_m$ on the Lie algebra of $U$ has all its weights strictly positive. 
 Then $\hat{U} = U \rtimes \l(\GG_m)$ is a normal subgroup of $H$ and $H/\hU \cong R/\l(\GG_m)$ is reductive.
\end{definition}

Let $H=U \rtimes R$ be a linear algebraic group with internally graded unipotent radical $U$ acting linearly with respect to an ample line bundle $L$ on a projective variety  $X$; that is, the action of $H$ on $X$ lifts to an action on $L$ via automorphisms of the line bundle. When $H=R$ is reductive, using Mumford's classical geometric invariant theory (GIT)  \cite{git}, we can define $H$-invariant open subsets $X^s \subseteq X^{ss}$ of $X$ (the stable and semistable loci for the linearisation) with a geometric quotient $X^s/H$ and (if $X^s$ is non-empty)  projective completion $X/\!/H \supseteq X^s/H$ which is the projective variety associated to the algebra  of invariants
 $\bigoplus_{k \geq 0} H^{0}(X,L^{\otimes k})^H$. 
Here the variety $X/\!/H$ is the image
of a surjective morphism $\phi$ from the open subset $X^{ss}$ of $X$  such that if $x,y \in X^{ss}$ then $\phi(x) = \phi(y)$ if and only if the closures of the $H$-orbits of $x$ and $y$ meet in $X^{ss}$. 
Furthermore the subsets $X^s$ and $X^{ss}$ can be described using the Hilbert--Mumford criteria for stability and semistability. 

Mumford's GIT 
 does not have an immediate extension  to actions of non-reductive linear algebraic groups $H$, since the algebra of invariants $\bigoplus_{k \geq 0} H^{0}(X,L^{\otimes k})^H$ is not necessarily finitely generated as a graded algebra when $H$ is not reductive. 
It is still possible to define semistable and stable subsets $X^{ss}$ and $X^s$, with a geometric quotient $X^s/H$ which is an open subset of a so-called enveloping quotient $X\env H$ with an $H$-invariant morphism $\phi: X^{ss} \to X\env H$, and if the algebra of invariants 
 $\bigoplus_{k \geq 0} H^{0}(X,L^{\otimes k})^H$ is finitely generated then $X\env H$ is the associated projective variety
 \cite{bdhk2,dk}. But in general the enveloping quotient $X\env H$ is not necessarily projective, the morphism $\phi$ is not necessarily surjective (and its image may be only a constructible subset, not a subvariety, of $X\env H$). In addition there are in general no obvious analogues of the Hilbert--Mumford criteria.

However when $H = U \rtimes R$ has internally graded unipotent radical $U$ and acts linearly on a projective variety $X$, then provided that we are willing to modify the linearisation of the action by replacing the line bundle $L$ by a sufficiently divisible tensor power and multiplying by a suitable character of $H$ (which will not change the action of $H$ on $X$), many of the  key features of classical GIT still apply.

Let such an $H$ act linearly on an irreducible projective variety $X$ with respect to a very ample line bundle $L$.
Let $\chi: H \to \GG_m$ be a character of $H$. Its kernel contains $U$, and its restriction to $\hU$ can be identified with an integer so that the integer 1 corresponds to the character of $\hU$ which fits into the exact sequence $U \hookrightarrow \hat{U} \to \l(\GG_m)$. Let $\weight_{\min}$ be the minimal weight for the $\l(\GG_m)$-action on
$V:=H^0(X,L)^*$ and let $V_{\min}$ be the weight space of weight $\weight_{\min}$ in
$V$. Suppose that $\weight_{\min}=\weight_0 < \weight_{1} < 
\cdots < \weight_{\max} $ are the weights with which the one-parameter subgroup $\l: \GG_m \leq \hU \leq H$ acts on the fibres of the tautological line bundle $\calo_{\PP((H^0(X,L)^*)}(-1)$ over points of the connected components of the fixed point set $\PP((H^0(X,L)^*)^{\GG_m}$ for the action of $\GG_m$ on $\PP((H^0(X,L)^*)$; since $L$ is very ample $X$ embeds in $\PP((H^0(X,L)^*)$ and the line bundle $L$ extends to the dual $\calo_{\PP((H^0(X,L)^*)}(1)$ of the tautological line bundle on $\PP((H^0(X,L)^*)$.
Note that we can assume that there exist at least two distinct such weights since otherwise the action of the unipotent radical $U$ of $H$ on $X$ is trivial, and so the action of $H$ is via an action of the reductive group $R=H/U$.

 The linearisation of the action of $H$ on $X$ with respect to the ample line bundle $L^{\otimes c}$ can be twisted by the character $\chi$ so that the weights $\weight_j$ are replaced with $\weight_jc-\chi$;
let $L_\chi^{\otimes c}$ denote this twisted linearisation. 

\begin{definition}\label{def:welladapted}
If 
$$ \weight_{\min} <
\frac{\chi|_{\hU} }{c} < \weight_{\min} + \epsilon,$$
where $\epsilon>0$ is sufficiently small, we will call the rational character $\chi/c$  {\it well adapted } to the linear action of $\hU$. 
We will call a linearisation {\it well adapted} if $\weight_{\min} <0<\weight_{\min} + \epsilon$ for sufficiently small $\epsilon>0$. Note that $\chi/c$ is well-adapted if and only if the linearisation $L_\chi^{\otimes c}$ is well-adapted, and every linearisation $L$ with at least two different $\l(\GG_m)$-weights has well-adapted rational twists. 

\end{definition}

\begin{remark} How small $\epsilon >0$ should be depends on the situation; see \cite{bkcoh} Definition 4.19 and the discussion following it.
We certainly require that $\epsilon \le \weight_1 - \weight_{\min}$ so that 
$(\chi |_{\hU})/c \in (\weight_{\min},\weight_1)$, but we may well require $\epsilon$ to be smaller than this (cf. the proof of Theorem 4.28 in \cite{bkcoh}). The main requirement is that $\epsilon>0$ should be sufficiently small to ensure via the theory of variation of GIT that, under suitable additional hypotheses (see Definition \ref{def:welladaptedaction} below), twisting the linearisation of the $H$-action on $X$ by the rational character $\chi/c$ results in (semi)stable loci for reductive subgroups of $H$ which are independent of $\chi/c$ subject to the requirement that $\weight_{\min} < (\chi |_{\hU})/c < \weight_{\min} + \epsilon$. By the Hilbert--Mumford criteria, this can be expressed in terms of the geometry of convex hulls of sets of weights for the action on $X$ of a maximal torus of $R$.

\end{remark}
Suppose that $H = U \rtimes R$, with grading one-parameter subgroup $\l:\GG_m \to Z(R)$, acts linearly on a projective variety $X$ with respect to an ample line bundle $L$.  Let
\[
Z_{\min}:=X \cap \PP(V_{\min})=\left\{
\begin{array}{c|c}
\multirow{2}{*}{$x \in X$} & \text{$x$ is a $\l(\GG_m)$-fixed point and} \\ 
 & \text{$\l(\GG_m)$ acts on $L^*|_x$ with weight $\weight_{\min}$} 
\end{array}
\right\}
\]
and
\begin{equation}\label{def:xmin0}
X_{\min}^0:=\{x\in X \mid p(x)  \in Z_{\min}\}  \quad \mbox{ where } \quad  p(x) =  \lim_{\substack{ t \to 0\\ t \in \GG_m }} \l(t) \cdot x \quad \mbox{ for } x \in X.
\end{equation}  
Then  $p: X_{\min}^0 \to Z_{\min}(X)$ is $U$-invariant and $R$-equivariant (\cite{bkcoh} Lemma 4.16); that is, 
\[p(rux)=rp(x) \text{ for } x \in X_{\min}^0, r \in R, u\in U.\]  

Suppose that $L_\chi^{\otimes c}$ is well adapted and let $X^{s,\GG_m}_{\min+}$ denote the stable subset of $X$ for the linear action of $\l:\GG_m \to Z(R)$ with respect to $L_\chi^{\otimes c}$. By the Hilbert--Mumford criteria (\cite{git,Newstead})
\[X^{s,\GG_m}_{\min+} = X_{\min}^0 \setminus Z_{\min};\] 
indeed $X^{s,\GG_m}_{\min+}$ is the stable set for the action of $\l:\GG_m \to Z(R)$ with respect to any $L_\chi^{\otimes c}$ such that 
$\weight_{\min} < \chi/c < \weight_{1}$.
Since the infinitesimal action of a weight vector $\xi \in \lieu$ with weight $\chi$ for the adjoint action of $\l(\GG_m)$ on the Lie algebra $\lieu$ of $U$ takes a weight vector in $V$ with weight $\weight$ to one of weight $\weight + \chi$, and takes a weight vector in the dual of $V$ with weight $\weight$ to one of weight $\weight - \chi$, the action of $U$ on $X$ preserves $X_{\min}^0$, although it  does not in general preserve $Z_{\min}$. Thus if $u \in U$ we have 
\begin{equation} \label{xsminplus} uX^{s,\GG_m}_{\min+} = X_{\min}^0 \setminus uZ_{\min}. \end{equation}

\begin{definition} \label{cond star}  
We say that 
 \lq semistability coincides with stability' for a well adapted linear action of $\hU$ 
  if 
\begin{equation}\label{star}  
 \stab_{U}(z) = \{ e \}  \textrm{ for every } z \in Z_{\min}. \tag{$*$}
\end{equation}
Note that \eqref{star} holds if and only if we have $\stab_{U}(x) = \{e\}$ for all $x \in X^0_{\min}$. 

\label{def min stable}   When (\ref{star}) holds for a well adapted action of $\hU$, the {\it min-stable locus} for the $\hU$-action is 
\[ X^{s,{\hU}}_{\min+}= 
\bigcap_{u \in U} u X^{s,\lambda(\GG_m)}_{\min+} = X^0_{\min} \setminus U Z_{\min}.  \]
Here the last equation follows from \eqref{xsminplus}.
\end{definition}

\begin{definition}\label{def:welladaptedaction} A {\it well-adapted linear action} 
on an irreducible projective variety $X$ is given by the following data: 
\begin{enumerate}
\item a linear algebraic group $H = U \rtimes R$ with unipotent radical $U$ and Levi subgroup $R$, and a central one-parameter subgroup $\l:\GG_m \to Z(R)$ of $R$ which grades $U$ in the sense of Definition \ref{def:gradedunipotent}, with a \lq complementary' connected subgroup $Z^\perp$ of $Z(R)$ such that $\mathrm{Lie}Z(R) = \mathrm{Lie}Z^\perp \oplus \mathrm{Lie}\lambda(\GG_m)$;
\item a linear action of $H$ on $X$ with respect to an ample line bundle $L$: 
\item a character $\chi: H \to \GG_m$ of $H$ whose restriction to $Z^\perp$ is zero and whose restriction to $\hU = U \rtimes \lambda(\GG_m)$ is nonzero, and  a strictly positive integer $c$ such that the rational character $\chi/c$
satisfies
$$(\chi |_{\hU})/c = \weight_{\min} + \epsilon \mbox{  where }0< \epsilon <\!< 1.$$
\end{enumerate}
One way to choose $Z^\perp$ is so that its Lie algebra is the orthogonal complement to $\mathrm{Lie}\lambda(\GG_m)$ in $\mathrm{Lie}Z(R)$ with respect to a suitable inner product. $Z^{\perp}$ determines and is determined by the ray in the direction of $\chi$ (that is, consisting of positive scalar multiples of $\chi$); 
 the rational character $\chi/c$ is required to lie on 
 the line generated by this ray, just beyond the 
  point labelled by $\weight_{\min}$.
\end{definition}

\begin{theorem}[\cite{bkcoh} Theorem 8] \label{mainthm} 
Let $(X,L,H,\hat{U},\chi)$ be a well-adapted linear action satisfying condition (\ref{star}) in Definition \ref{cond star}. Then 
\begin{enumerate}
\item the algebras of invariants 
\[\oplus_{m=0}^\infty H^0(X,L_{m\chi}^{\otimes cm})^{\hat{U}} \text{ and } \oplus_{m=0}^\infty H^0(X,L_{m\chi}^{\otimes cm})^{H} = (\oplus_{m=0}^\infty H^0(X,L_{m\chi}^{\otimes cm})^{\hat{U}})^{R}\]
are finitely generated;   
\item the enveloping quotient $X\env \hat{U}$ is the projective variety associated to the algebra of invariants $\oplus_{m=0}^\infty H^0(X,L_{m\chi}^{\otimes cm})^{\hat{U}}$ and is a geometric quotient of the open subset $X^{s,\hU}_{\min+}$ of $X$ by $\hat{U}$;
  

\item the enveloping quotient $X\env H$ is the projective variety associated to the algebra of invariants $\oplus_{m=0}^\infty H^0(X,L_{m\chi}^{\otimes cm})^{{H}}$ and is the classical GIT quotient of $X \env \hat{U}$ by the induced action of $R/\l(\GG_m)$ with respect to the linearisation induced by a sufficiently divisible tensor power of $L$. 
\end{enumerate}
\end{theorem}

\begin{remark} \label{explainN}
The reason that it is only a sufficiently divisible tensor power of $L$ which induces a line bundle on the geometric quotient $X^{s,\hU}_{\min+}/\hat{U}$ is that the action of $\hU$ on $X^{s,\hU}_{\min+}$ may have nontrivial (but finite) stabilisers. Replacing $L$ by $L^{\otimes N}$ where $N$ is sufficiently divisible ensures that $\Stab_{\hU}(x)$ acts trivially on the fibre of $L$ over $x$ for each $x \in X^{s,\hU}_{\min+}$.
\end{remark}

\begin{definition}
When the conditions of Theorem \ref{mainthm} hold, we call $X \env H$ (respectively $X \env \hU$) a {\it GIT quotient} and we denote it by $X/\!/H$ (respectively $X /\!/ \hU$). 
\end{definition}

\begin{definition}\label{def:s=ss} Let $(X,L,H,\hat{U},\chi,c)$ be ingredients for a well-adapted linear action (as in Definition \ref{def:welladaptedaction}) such that \eqref{star} holds (as in Definition \ref{cond star}). 
We denote by $X^{s,{{H}}}_{\min+}$ and $X^{ss,{{H}}}_{\min+}$ the pre-images in $X^{s,{{\hU}}}_{\min+} 
$ of the stable and semistable loci for the induced linear action of the reductive group $H/\hU = R/\l(\GG_m)$ on 
$X/\!/ \hat{U} = X^{s,\hU}_{\min+}/\hat{U}$.
We denote by $Z_{\min}(X)^{s(s),R/\l(\GG_m)}$  the (semi)stable locus for the action of the reductive group $R/\l(\GG_m)$ on $Z_{\min}(X)$, with the (rational) linearisation induced by twisting the original linearisation of the $R$-action by a rational multiple of $\chi$ chosen so that after twisting $\l(\GG_m)$ acts trivially on the restriction of $L$ to $Z_{\min}(X)$ (equivalently the twisted linearisation is borderline adapted in the sense of Definition \ref{def:welladapted}). Then we say that 
\begin{enumerate}[(i)] 
\item \emph{weak $H$-stability=$H$-semistability} holds if 
$X^{s,{{H}}}_{\min+} = X^{ss,{{H}}}_{\min+}$, and 
\item \emph{strong $H$-stability=$H$-semistability} holds if
$$Z_{\min}(X)^{s,R/\l(\GG_m)} = Z_{\min}(X)^{ss,R/\l(\GG_m)} \neq \emptyset.$$
\end{enumerate}
\end{definition}

\begin{remark}
These versions both coincide with condition \eqref{star} when $H=\hat{U}$, which will be the case in the situation considered in this paper. 
In general the weak version can depend on the choice of the character $\chi$, whereas the strong version is independent of this choice since $\l(\GG_m)$ acts trivially on $Z_{\min}(X)$ and on the restriction of $L$ to $Z_{\min}(X)$. 
\end{remark}

\begin{theorem}[\cite{bkcoh} Thm 4.28] 
Let $(X,L,H,\hat{U},\chi,c)$ be ingredients for a well-adapted linear action (as in Definition \ref{def:welladaptedaction}) such that semistability coincides with stability for the action of $H$  in the strong sense of Definition \ref{def:s=ss}. 
Suppose also that $UZ_{\min}(X)$ is not dense in $X$. 
Then  the projective variety $X/\!/ H$ is a geometric quotient of $X^{s,{{H}}}_{\min+}$
 by the action of $H$, the stabiliser  $\Stab_H(x)$ is finite for all $x \in X^{s,{{H}}}_{\min+}$
and 
$$ 
X^{s,H}_{\min+}  = X^{ss,{{H}}}_{\min+} = p^{-1}(Z_{\min}(X)^{s,R/\l(\GG_m)}) \setminus UZ_{\min}(X),$$
where $p:X^0_{\min} \to Z_{\min}(X)$ is  as in \eqref{def:xmin0} and $Z_{\min}(X)^{s,R/\l(\GG_m)}$ is  as 
 in Definition \ref{def:s=ss}. 
\end{theorem}

\begin{remark} \label{lineN}
Here by Theorem \ref{mainthm} $X/\!/ H$ is the projective variety associated to the finitely generated graded algebra 
$\oplus_{m=0}^\infty H^0(X,L_{m\chi}^{\otimes cm})^{H} $. Thus 
$X/\!/ H$ has an ample line bundle which we will denote by $\calo_{X/\!/ H}(1)$ whose pullback to 
$p^{-1}(Z_{\min}(X)^{s,R/\l(\GG_m)}) \setminus UZ_{\min}(X)$ 
is isomorphic to $L^{\otimes N}$ for some $N \geq 1$, and such that when $N$ divides $cm$ then $H$-invariant sections of $L_{m\chi}^{\otimes cm}$ define sections of $\calo_{X/\!/H}(cm/N)$. More precisely, sections of $\calo_{X/\!/H}(cm/N)$ pull back to $H$-invariant sections of the restriction of $L_{m\chi}^{\otimes cm}$ to $X^{s,H}_{\min+}$, and these all extend to $H$-invariant sections of 
$L_{m\chi}^{\otimes cm}$ over $X$; moreover every $H$-invariant section of 
$L_{m\chi}^{\otimes cm}$ over $X$ vanishes on the complement of $X^{s,H}_{\min+}$.

Note that although $H$ acts on $p^{-1}(Z_{\min}(X)^{s,R/\l(\GG_m)}) \setminus UZ_{\min}(X)$ with finite stabilisers it may not act freely, and if $x \in p^{-1}(Z_{\min}(X)^{s,R/\l(\GG_m)}) \setminus UZ_{\min}(X)$ then $\mathrm{Stab}_H(x)$ may not act trivially on the fibre $L_x$ of the line bundle $L$. However if $N$ is chosen to be sufficiently divisible then 
$\mathrm{Stab}_H(x)$ will act trivially on $L_x^{\otimes N}$ for all such $x$, so that $L^{\otimes N}$ descends to a line bundle on $X/\!/H$.

\end{remark}

\begin{remark} 
If  $UZ_{\min}(X)$ is dense in $X$, then $Z_{\min}(X)$ is the natural candidate for the role of quotient of $X$ by $U$ or $\hU$, with semistable locus $X^0_{\min}$, via the map 
$p:X^0_{\min} \to Z_{\min}(X)$ defined at \eqref{def:xmin0}, and $Z_{\min}(X)/\!/(R/\l(\GG_m))$ is the natural candidate for the quotient of $X$ by $H$. Thus in this case we are reduced to reductive GIT.
\end{remark}

In the set-up of the Green--Griffiths--Lang conjecture, the group which acts is a polynomial reparametrisation group of the form $\hU=U \rtimes \CC^*$, but initially the condition (\ref{star}) that semistability coincides with stability in Definition \ref{cond star} is not satisfied. However we can blow up to obtain a situation in which semistability does coincide with stability and then apply Theorem \ref{mainthm} to construct a non-reductive quotient; we can also apply Theorems \ref{thm:cohomologyrings}-\ref{jeffreykirwannonred} below in order to integrate over this non-reductive quotient. In fact this blow-up construction works much more generally; see \cite{bdhk1}.

\section{Moment maps and cohomology of non-reductive quotients}\label{NRGITlocalisation}

In this section we briefly summarise the results of \cite{bkcoh}, which generalise results of 
 Martin \cite{SM} to the cohomology of GIT quotients by non-reductive groups with internally graded unipotent radicals.

\subsection{Reductive actions} First let us recall the reductive picture. Let $X$ be a nonsingular complex projective variety acted on by a complex reductive group $G$ with respect to an ample linearisation. Then we can choose a maximal compact subgroup $K$ of $G$ and a $K$-invariant Fubini--Study K\"ahler metric on $X$ with corresponding moment map $\mu:X\to \mathfrak{k}^*$, where $\mathfrak{k}$ is the Lie algebra of $K$ and $\mathfrak{k}^* = \Hom_{\RR}(\mathfrak{k},\RR)$ is its dual. $\mathfrak{k}^*$ embeds naturally in the complex dual $\mathfrak{g}^* = \Hom_{\CC}(\mathfrak{k},\CC)$ of the 
Lie algebra $\mathfrak{g} = \mathfrak{k} \otimes \CC$ of $G$, as $\mathfrak{k}^* = \{\xi \in \mathfrak{g}^*: \xi(\mathfrak{k}) \subseteq \RR\}$; using this identification we can regard $\mu: X \to \mathfrak{g}^*$ 
 as a \lq moment map'  for the action of $G$, although of course it is not a moment map for $G$ in the traditional sense of symplectic geometry. 

In \cite{francesthesis} it is shown that the norm-square $f=||\mu||^2$ of the moment map $\mu:X\to \mathfrak{k}^*\subseteq \mathfrak{g}^*$ induces an equivariantly perfect Morse stratification of $X$ such that  the open stratum which retracts equivariantly onto the zero level set $\mu^{-1}(0)$ of the moment map coincides with the GIT semistable locus $X^{ss}$ for the linear action of $G$ on $X$. In particular this tells us that the restriction map
$$H^*_G(X;\QQ) \to H^*_G(X^{ss};\QQ)$$
is surjective; we also have an isomorphism (of vector spaces  though not of algebras) 
$H^*_G(X;\QQ)  \cong H^*(X;\QQ) \otimes H^*(BG;\QQ).$
Moreover, $\mu^{-1}(0)$ is $K$-invariant and its inclusion in $X^{ss}$ induces a homeomorphism 
\begin{equation}\label{fundamental}
\mu^{-1}(0)/K  \cong X/\!/G.
\end{equation} 
 When $X^{s}=X^{ss}$ the $G$-equivariant rational cohomology of $X^{ss}$ coincides with the ordinary rational cohomology of its geometric quotient $X^{ss}/G$, which is the GIT quotient $X/\!/G$, and we get expressions for the Betti numbers of $X/\!/G$ in terms of the equivariant Betti numbers of $X$ and the equivariant Betti numbers of the unstable GIT strata, which can be described inductively \cite{francesthesis}. In order to obtain the ring structure on the rational cohomology of $X/\!/G$, the surjectivity of the composition
 $$\kappa: H^*_G(X;\QQ) \to H^*_G(X^{ss};\QQ) \cong H^*(X/\!/G;\QQ)$$
 can be combined with Poincar\'e duality on $X/\!/G$ and the nonabelian localisation formulas for intersection pairings on $X/\!/G$ given in \cite{jeffreykirwan}.

Martin \cite{SM} used \eqref{fundamental} to obtain formulas for the  intersection pairings on the quotient $X/\!/G$ in a different way, by relating these pairings to intersection pairings on the associated quotient $X/\!/T_\CC$, where $T_\CC
$ is a maximal torus for $G$. He proved a formula expressing the rational cohomology ring of $X/\!/G$ in terms of the rational cohomology ring of $X/\!/T_\CC$ and an integration formula relating intersection pairings on the cohomology of $X/\!/G$ to corresponding pairings on $X/\!/T_\CC$. This integration formula, combined with methods from abelian localisation, leads to 
residue formulas for pairings on $X/\!/G$ which are closely related to those of \cite{jeffreykirwan} (see also \cite{vergne}). 
 
Note that, in the symplectic approach leading to \eqref{fundamental}, a maximal compact subgroup $K$ of $G$ and a $K$-invariant Fubini--Study K\"ahler metric $\omega$ are fixed and the corresponding moment map $\mu:X \to \mathfrak{k}^*$ for the $K$-action is used. 
In \cite{bkcoh} we introduced a new point of view as follows. Any other maximal compact subgroup of $G$ is given by $K' = g^{-1}Kg$ for some $g \in G$;
then $\omega_{K'} = g^*\omega$ is $K'$-invariant and if $\mu_K:X \to \mathfrak{k}^*$ exists then $\mu_{K'} = Ad^*(g^{-1}) \circ \mu_K \circ g$ is a $K'$-moment map with respect to $\omega_{K'}$, where $Ad^*$ denotes the co-adjoint action of $G$ on the complex dual $\mathfrak{g}^*$ of its Lie algebra and $\mathfrak{k}^*$ is embedded in $\mathfrak{g}^*$ as above. 
So to define a \lq moment map' for the $G$-action on $X$, instead of fixing a Fubini-Study K\"ahler form  $\omega$ it is natural to ask for a $G$-orbit $\Omega$ in   
$$\{ (K,\omega) \in \mathcal{K}_G \times \mbox{K\"ahler}(X) : \omega \mbox{ is $K$-invariant} \}$$
where $ \mbox{K\"ahler}(X)$ is the space of 
 K\"ahler forms on the complex manifold $X$ and 
 $$\mathcal{K}_G = \{K \, |\,  K \mbox{ is a maximal compact subgroup of } G\}.$$
 
In \cite{bkcoh} we call such a $G$-orbit $\Omega=\{(g^{-1}Kg,g^* \omega): g\in G\}$ a \emph{$G$-equivariant K\"ahler structure} on $X$. 
We define an \emph{$\Omega$-moment map} for the $G$-action on $X$    
 to be a smooth $G$-equivariant map 
$$\newmm: \Omega \times X \to \mathfrak{g}^*=\Hom_{\CC}(\mathfrak{k},\CC)$$
such that $\newmm(K,\omega, x) = \mu_{(K,\omega)}(x)$ for each $(K,\omega) \in \Omega$ 
 and $x \in X$, where
$\mu_{(K,\omega)}:X \to \mathfrak{g}^*$ is the composition of a moment map for the $K$-action on $(X,\omega)$ with the canonical embedding 
\begin{equation} \label{eqn:dual} \mathfrak{k}^* = \{\xi \in \mathfrak{g}^*: \xi(\mathfrak{k}) \subseteq \RR\}. \end{equation}
of the dual of the Lie algebra of $K$ in  $\mathfrak{g}^*$. Given a $G$-equivariant K\"ahler structure $\Omega$ on $X$ and a choice of maximal compact subgroup $K$ of $G$ with $(K,\omega) \in \Omega$, the existence and choice of an $\Omega$-moment map 
$\newmm: \Omega \times Y \to \mathfrak{g}^*$ for the $G$-action on $X$ is equivalent to the existence and choice of a moment map $\mu:X \to \lieks$ in the traditional sense for the $K$-action on the K\"ahler manifold $(X,\omega)$.
However,  this point of view with emphasis on the complex group action rather than the compact one, is a more natural one to take when extending the notion of a moment map to non-reductive linear algebraic groups.

\subsection{Non-reductive actions} In \cite{bkcoh} similar results are obtained for non-reductive actions. Let $X \subset \PP^n$ be a nonsingular complex projective variety with a linear action (with respect to an ample line bundle $L$) of a complex linear algebraic group $H=U \rtimes R$ with internally graded unipotent radical $U$. Let $\hU=U \rtimes \l(\CC^*) \subseteq H$ where $\l: \CC^* \to Z(R)$ is a grading 1-parameter subgroup; then $\l(S^1) \subseteq \l(\CC^*) \subseteq \hU$ is a maximal compact subgroup of $\hU$. Suppose that $H$ acts on $X$ via a homomorphism $\rho: H \rightarrow  G = \GL(n+1, \CC)$, 
 and the Levi subgroup $R$ is the complexification of a maximal compact subgroup $Q=K\cap H$ of $H$, where the unitary group $K=U(n+1)$ is a maximal compact subgroup of $G$. So $Q$ preserves the standard
$U(n+1)$-invariant hermitian inner product $\langle  \, ,\rangle$ on $\CC^{n+1}$.
We can use the corresponding Fubini-Study form $\omega$ on $\PP^n$ to define a $G$-equivariant K\"ahler structure 
\[\Omega_{G,\PP^n}=\{(g^{-1}Kg,g^*\omega):g\in GL(n+1)\}\] 
on $\PP^n$ which contains $(K,\omega)$. We can restrict the corresponding $\Omega$-moment map $\newmm: \Omega_{G,\PP^n} \times \PP^n \to \lieg^*$ to $\Omega_{H,X} \times X$ where 
\[\Omega_{H,X}=\{(K,\omega) \in \Omega_{G,\PP^n}:  K\cap H \in \mathcal{K}_H\}=\{(h^{-1}Kh,h^*\omega): h\in H\} \simeq H/Q\]
and compose it with the projection $p^*: \lieg^* \to \lieh^*$ to obtain an $\Omega$-moment map $\Omega_{H,X} \times X \to \lieh^*$ for the $H$-action on $X$ fitting into the following diagram:
\begin{equation}\label{uhatmomentmapH}
\xymatrix{\Omega_{H,X} \times X \ar[r]^-{\newmmxg} \ar[rd]_{\newmmxh}  &  \lieg^*=\liek^* \oplus i\liek^* \ar[d]^{p^*} \\
& \mathfrak{h}^*} 
\end{equation}
We obtain another $\Omega$-moment map by adding to $\newmmxh$ any $H$-equivariant map $\Omega \times X \to \mathfrak{h}^*$ which is constant and $K\cap H$-invariant on $\{(K,\omega)\}  \times X$ for every $(K,\omega)$ in the $H$-orbit $\Omega$. 
For $(K,\omega) \in \Omega_{H,X}$ the restriction $\mu^H_{(K,\omega)}: X \to \lieh^*$ to $\{(K,\omega)\} \times X$ is given, up to the addition of a $K \cap H$-invariant constant, by
\[\mu^H_{(K,\omega)}([x]) \cdot a =\frac{1}{2\pi i ||x||^2} \bar{x}^T \rho_*(a)x \in \CC \text{ for all } a \in H\]
where $\cdot$ denotes the natural pairing between $\lieh^*$ and $\lieh$.

Suppose now that the linearisation of the action of  $H$ on $X$ is well-adapted and $H$-stability=$H$-semistability in the strong sense of  Definition \ref{def:s=ss}).  Then it is shown in \cite{bkcoh} that for any $(K,\omega) \in \Omega_{X,H}$
\begin{enumerate}
\item $X^{s,H}_{\min+} = X^{ss,H}_{\min+} = \{ x \in X:0 \in \mathfrak{m}_{H,X,\Omega}(\Omega \times \{x\})\}=H(\mu_{(K,\omega)}^{H})^{-1}(0)$,
\item $0$ is a regular value of $\mu_{(K,\omega)}^{H}$ and
\item the embedding  of $(\mu_{(K,\omega)}^{H})^{-1}(0) $ in $X^{s,H}_{\min+}$ induces a diffeomorphism of orbifolds 
\begin{equation}\label{sympquotient}
(\mu_{(K,\omega)}^{H})^{-1}(0)/(K \cap H) \to X^{s,H}_{\min+}/H = X/\!/H. 
\end{equation}
\end{enumerate}

\begin{remark} In \cite{bkcoh} similar results are obtained  in the more general situation when $X$ is compact K\"ahler but not necessarily projective.
\end{remark}

In the present paper we will work with actions of the diffeomorphism group (see \S \ref{subsec:jetdiff})
\[\hU=\diff_k(1)=\left\{ \left(
\begin{array}{ccccc}
\alpha_1 & \alpha_2   & \alpha_3          & \ldots & \alpha_k \\
0        & \alpha_1^2 & 2\alpha_1\alpha_2 & \ldots & 2\alpha_1\alpha_{k-1}+\ldots \\
0        & 0          & \alpha_1^3        & \ldots & 3\alpha_1^2\alpha_ {k-2}+ \ldots \\
0        & 0          & 0                 & \ldots & \cdot \\
\cdot    & \cdot   & \cdot    & \ldots & \alpha_1^k
\end{array}
 \right)  \begin{array}{c} : \alpha_1,\ldots,\alpha_k \in \CC, \\ \alpha_1 \neq 0 \end{array} \right\}
\]
on projective varieties. Therefore we only state the remaining results of \cite{bkcoh} for the situation when $H=\hU=U \rtimes \l(\CC^*)$ 
acts via a homomorphism $\rho: \hU \to G = \GL(n+1)$ on a nonsingular projective variety $X \subset \PP^n$, such that if $K=U(n+1)$ then $K \cap \hU=\l(S^1)$ is the unique maximal compact subgroup of the one-parameter subgroup $\l(\CC^*)$. The Lie algebra of $\hU$ decomposes as a real vector space as 
\begin{equation}\label{decomp}
\hat{\lieu}=\RR \oplus i\RR \oplus \lieu
\end{equation}
where $\mathrm{Lie}(K \cap \hU)=i \RR$ and $\lieu$ is the Lie algebra of the complex unipotent group $U$. Diagram (\ref{uhatmomentmapH}) becomes
\begin{equation}\label{uhatmomentmap}
\xymatrix{\Omega_{\hU,X} \times X \ar[r]^-{\newmm} \ar[rd]^{\newmmxu} &  \lieg^*=\liek^* \oplus i\liek^* \ar[d] 
\\
& \hat{\lieu}^*=\RR \oplus i\RR \oplus \lieu^*} .
\end{equation}
We can allow the addition of a rational character (that is, a rational multiple of the derivative of the group homomorphism $\hU \to \CC^*$ with kernel $U$) to this \lq $\Omega$-moment map' $\newmmxu:X \to \hat{\lieu}^*$. Indeed, in the K\"ahler setting there is no reason not to allow real multiples, rather than only rational ones, here.

In this case, if  $\hU$-stability=$\hU$-semistability holds in the sense of Definition \ref{cond star} then \eqref{sympquotient} tells that for any $(K,\omega) \in \Omega_{\hU,X}$ the embedding $(\mu_{(K,\omega)}^{\hU})^{-1}(0) \hookrightarrow X^{s,\hU}_{\min+}$ induces a diffeomorphism of orbifolds 
\[(\mu_{(K,\omega)}^{\hU})^{-1}(0)/S^1 \overset{\simeq}{\to} X/\!/\hU.\] 
where $S^1=\l(S^1)=K \cap \hU$. We fix $(K,\omega)\in \Omega_{\hU,X}$, and drop it from the notation in the rest of this section, writing  $\mu_{\hU}$ for $\mu_{(K,\omega)}^{\hU}$ and $\mu_{\l(\CC^*)}$ fpr $\mu_{(K,\omega)}^{\l(\CC^*)}$ (which is a moment map in the usual sense for $\l(S^1)$ acting on $X$. 

Following Martin (see diagram (1.2) in  \cite{bkcoh}) we consider   
\begin{equation}\label{diagrammartin2}
\xymatrix{X/\!/\hU \overset{\simeq}{\leftarrow} \mu_{\hU}^{-1}(0)/S^1 \ar@{^{(}->}[r]^-{i} & \mu_{\l(\CC^*)}^{-1}(0)/S^1=X/\!/\CC^* .
}
\end{equation}
\begin{definition} 
 For a weight $\a$ of $\l(\CC^*) \subseteq \hU$, let $\CC_\a$  denote the corresponding $1$-dimensional complex representation of $\CC^*$ and let 
\[L_\a:=\mu_{\l(\CC^*)}^{-1}(0)\times_{S^1} \CC_\a \to X/\!/\CC^*,\]
denote the associated line bundle whose Euler class is denoted by $e(\a) \in H^2(X/\!/\CC^*)\simeq H_\CC^2(X)$.  
For a $\CC^*$-invariant complex subspace $\mathfrak{a} \subseteq \lieu$ let 
\[V_{\mathfrak{a}}= \mu_{\l(\CC^*)}^{-1}(0) \times_{S^1} \mathfrak{a} \to X/\!/\CC^*\]
denote the corresponding vector bundle. 
\end{definition}
\begin{proposition}[\cite{bkcoh}, Proposition 7.12 (i)]\label{propmartin}
The vector bundle $V_\lieu^* \to X/\!/\CC^*$ has a $C^{\infty}$-section $s$ which
is transverse to the zero section 
 and whose zero set is the submanifold
$\mu_{\hU}^{-1}(0)/S^1 \subseteq X/\!/\CC^*$. Therefore the $\CC^*$-equivariant normal bundle is
\[\mathcal{N}(i)\simeq V_\lieu^*. \]
\end{proposition}

This leads in \cite{bkcoh} to the following theorems:

\begin{theorem}[\cite{bkcoh}, Theorem 1.4]\label{thm:cohomologyrings} 
Let $X$ be a smooth projective variety endowed with a well-adapted action of $\hU=U \rtimes \CC^*$ such that $\hU$-stability=$\hU$-semistability holds. Then there is a natural ring isomorphism
\[H^*(X/\!/\hU,\QQ)\simeq \frac{H^*(X/\!/\CC^*,\QQ)}{ann(\mathrm{Euler}(V^*_\lieu))}.\]
Here $\mathrm{Euler}(V^*_\lieu) \in H^*(X/\!/\CC^*)$ is the Euler class of the bundle $V^*_\lieu$ and  
\[\mathrm{ann}(\mathrm{Euler}(V^*_\lieu))=\{c \in H^*(X/\!/\CC^*,\QQ)| c \cup \mathrm{Euler}(V^*_\lieu)=0\} \subseteq H^*(X/\!/\CC^*,\QQ).\]
is the annihilator ideal. 
\end{theorem}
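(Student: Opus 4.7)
The plan is to adapt Martin's \cite{SM} strategy from the reductive case. The symplectic picture \eqref{diagrammartin2} provides a closed embedding
\[i:\; \mu_{\hU}^{-1}(0)/S^{1}=X/\!/\hU \hookrightarrow X/\!/\CC^{*}=\mu_{S^{1}}^{-1}(0)/S^{1},\]
and Proposition~\ref{propmartin}(1) identifies $X/\!/\hU$ as the transverse vanishing locus of a smooth section $s$ of the vector bundle $V_{\lieu}^{*}\to X/\!/\CC^{*}$, so the normal bundle of $i$ is $\mathcal{N}(i)\simeq V_{\lieu}^{*}$. The asserted ring isomorphism will be induced by the pullback $i^{*}$, so I would prove (a) that $i^{*}\colon H^{*}(X/\!/\CC^{*},\QQ)\to H^{*}(X/\!/\hU,\QQ)$ is surjective, and (b) that $\ker(i^{*})=\mathrm{ann}(\mathrm{Euler}(V_\lieu))$.

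For part (a) I would invoke the non-reductive Kirwan surjectivity proved in \cite{bkcoh}, which under our well-adaptedness and the hypothesis that $\hU$-stability coincides with $\hU$-semistability guarantees that both Kirwan maps $\kappa_{\hU}\colon H^{*}_{\hU}(X,\QQ)\twoheadrightarrow H^{*}(X/\!/\hU,\QQ)$ and $\kappa_{\CC^{*}}\colon H^{*}_{\CC^{*}}(X,\QQ)\twoheadrightarrow H^{*}(X/\!/\CC^{*},\QQ)$ are surjective. Since $U$ is a complex unipotent group, hence rationally contractible, restriction along $\CC^{*}\hookrightarrow \hU$ gives an isomorphism $H^{*}_{\hU}(X,\QQ)\cong H^{*}_{\CC^{*}}(X,\QQ)$. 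Naturality of the Kirwan construction with respect to the inclusion $\mu_{\hU}^{-1}(0)\hookrightarrow \mu_{S^{1}}^{-1}(0)$ of the relevant symplectic reductions then produces a commutative square whose horizontal arrows are the two surjective Kirwan maps and whose left vertical arrow is this isomorphism; consequently $i^{*}$ is surjective.

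Part (b) splits into two opposite containments. The inclusion $\ker(i^{*})\subseteq\mathrm{ann}(\mathrm{Euler}(V_\lieu))$ is immediate from the projection formula for the regular embedding $i$: for any $\alpha\in H^{*}(X/\!/\CC^{*},\QQ)$ with $i^{*}\alpha=0$,
\[\alpha\cup\mathrm{Euler}(V_\lieu^{*})=\alpha\cup i_{*}(1)=i_{*}(i^{*}\alpha)=0,\]
and $\mathrm{Euler}(V_\lieu)$ and $\mathrm{Euler}(V_\lieu^{*})$ differ only by the sign $(-1)^{\dim U}$, so they generate the same annihilator ideal. For the reverse inclusion $\mathrm{ann}(\mathrm{Euler}(V_\lieu))\subseteq\ker(i^{*})$ I would adapt Martin's Morse-theoretic argument: the norm-square $\|s\|^{2}$ of the transverse section furnished by Proposition~\ref{propmartin}(1) is a Morse--Bott function on $X/\!/\CC^{*}$ whose minimum locus is exactly $X/\!/\hU$. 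Its downward gradient flow retracts $X/\!/\CC^{*}$ onto a tubular neighbourhood of $X/\!/\hU$ diffeomorphic to the total space of $V_\lieu^{*}$, and the associated Thom--Gysin long exact sequence realises cup product with $\mathrm{Euler}(V_\lieu^{*})$ as its connecting homomorphism, which supplies the missing containment.

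The hard step will be this reverse inclusion in (b). Martin's original proof exploits the full K\"ahler structure of the reductive symplectic quotient together with Atiyah--Bott equivariant perfection with respect to a maximal compact torus. Here I would need to verify, using well-adaptedness (Definition~\ref{def:welladapted}) together with $\hU$-stability coinciding with $\hU$-semistability, that the transverse section $s$ of Proposition~\ref{propmartin} can be arranged so that $\|s\|^{2}$ is genuinely Morse--Bott of the anticipated form on the quotient $X/\!/\CC^{*}$, and that the global gradient-flow retraction and its Thom--Gysin consequence still go through. The well-adapted hypothesis, by confining the cohomologically relevant data to the min-stable locus $X^{0}_{\min}\setminus UZ_{\min}$, is precisely what one expects to make this argument work globally rather than only in a formal neighbourhood of $\mu_{\hU}^{-1}(0)$.
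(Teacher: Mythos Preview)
The paper does not prove this theorem here; it is quoted from \cite{bkcoh}. Your architecture---surjectivity of $i^*$ via non-reductive Kirwan surjectivity, then identification of the kernel---matches Martin's template and is the right plan. Part (a) and the easy containment $\ker(i^*)\subseteq\mathrm{ann}(\mathrm{Euler}(V_\lieu))$ in (b) are fine.

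The gap is in the reverse inclusion $\mathrm{ann}(\mathrm{Euler}(V_\lieu))\subseteq\ker(i^*)$. Your claim that the gradient flow of $\|s\|^2$ retracts all of $X/\!/\CC^*$ onto a tubular neighbourhood of $X/\!/\hU$ is unjustified and in general false: $\|s\|^2$ will typically have critical points away from its zero locus, so no global retraction exists. (If it did, $i^*$ would be an isomorphism and the annihilator would vanish; in the paper's own application $\mathrm{Euler}(V_\lieu)=(k-1)!\,z^{k-1}$ is nilpotent in $H^*(X/\!/\CC^*)$, so this is visibly wrong.) What the projection formula actually gives you is $i_*i^*\alpha=\alpha\cup\mathrm{Euler}(V_\lieu^*)$, so $\alpha\in\mathrm{ann}$ only forces $i^*\alpha\in\ker(i_*)$; you would still need $i_*$ injective, and that does not follow from transversality of $s$ alone.

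The way Martin (and, following him, \cite{bkcoh}) closes this is via Poincar\'e duality on $X/\!/\hU$ together with the integration formula stated here as Theorem~\ref{thm:integration}: if $i^*\alpha\neq 0$, choose $\beta=i^*\tilde\beta$ with $\int_{X/\!/\hU}i^*\alpha\cup\beta\neq 0$ (possible by surjectivity of $i^*$), and then
\[0\neq\int_{X/\!/\hU}i^*(\alpha\cup\tilde\beta)=\int_{X/\!/\CC^*}(\alpha\cup\mathrm{Euler}(V_\lieu))\cup\tilde\beta,\]
forcing $\alpha\notin\mathrm{ann}(\mathrm{Euler}(V_\lieu))$. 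In other words, Theorem~\ref{thm:cohomologyrings} is logically downstream of Theorem~\ref{thm:integration}; you should route the hard inclusion through that integration formula rather than through an unsupported global Morse retraction.
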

\begin{theorem}[\cite{bkcoh}, Theorem 1.5]\label{thm:integration} 
Let $X$ be a smooth projective variety endowed with a well-adapted action of $\hU=U \rtimes \CC^*$ such that $\hU$-stability=$\hU$-semistability holds.
 Assume for simplicity that the stabiliser in $\hU$ of a generic $x \in X$ is trivial. Given a cohomology class $a \in H^*(X/\!/\hU)$  with a lift $\tilde{a}\in H^*(X/\!/\CC^*)$, then 
\[\int_{X/\!/\hU}a=\int_{X/\!/\CC^*}\tilde{a} \cup \mathrm{Euler}(V^*_\lieu),\]
where $\mathrm{Euler}(V^*_\lieu)$ is the cohomology class defined in Theorem \ref{thm:cohomologyrings}. Here we say that $\tilde{a}\in H^*(X/\!/\CC^*)$ is a lift of $a\in H^*(X/\!/\hU)$ if $a=i^*\tilde{a}$. 
\end{theorem}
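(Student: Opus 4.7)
The plan is to derive the integration formula from the Poincar\'e dual / Gysin pushforward identity for the closed embedding $i: X/\!/\hU \hookrightarrow X/\!/\CC^*$ in diagram \eqref{diagrammartin2}, using the explicit identification of its normal bundle already supplied by Proposition \ref{propmartin}(1).

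First I would invoke Proposition \ref{propmartin}(1): the submanifold $\mu_{\hU}^{-1}(0)/S^1 \simeq X/\!/\hU$ inside $\mu_{S^1}^{-1}(0)/S^1 \simeq X/\!/\CC^*$ is cut out as the zero locus of a smooth section of $V_\lieu^*$ transverse to the zero section, with $C^\infty$ normal bundle $\mathcal{N}(i) \simeq V_\lieu^*$. Under the conventions fixed in Theorem \ref{thm:cohomologyrings} this means that the Poincar\'e dual of $[X/\!/\hU]$ in $H^*(X/\!/\CC^*,\QQ)$ is $\mathrm{Euler}(V_\lieu)$, or equivalently $i_*(1) = \mathrm{Euler}(V_\lieu)$. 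Given any class $\tilde{a} \in H^*(X/\!/\CC^*)$ lifting $a = i^*\tilde{a}$, the projection formula combined with the fact that $\int_{X/\!/\CC^*}$ is the pushforward to a point then yields
\[
\int_{X/\!/\hU} a \;=\; \int_{X/\!/\hU} i^*\tilde{a} \;=\; \int_{X/\!/\CC^*} \tilde{a}\cup i_*(1) \;=\; \int_{X/\!/\CC^*} \tilde{a} \cup \mathrm{Euler}(V_\lieu),
\]
which is the desired formula.

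The main obstacle is that $X/\!/\CC^*$ and $X/\!/\hU$ are in general only rationally smooth orbifolds rather than smooth manifolds, so Poincar\'e duality and the Gysin pushforward have to be set up with $\QQ$-coefficients, and the transverse section of $V_\lieu^*$ produced by Proposition \ref{propmartin}(1) is only smooth (not algebraic). The hypotheses of the theorem are precisely what make this go through: the assumption that the generic $\hU$-stabiliser is trivial, combined with well-adaptedness and $\hU$-stability$\,=\,\hU$-semistability, guarantees that $S^1$ acts locally freely on both $\mu_{S^1}^{-1}(0)$ and $\mu_{\hU}^{-1}(0)$, so the quotients carry fundamental classes over $\QQ$ and integration of top-degree classes is well defined. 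Once this framework is in place, the argument is the standard zero-locus computation for a transverse section, running in direct parallel to Martin's reductive integration formula \cite{SM} with the pair $\hU \supset S^1$ replacing $G \supset T$.
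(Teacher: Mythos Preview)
The paper does not prove this theorem; it is quoted verbatim from \cite{bkcoh} (Theorem 5.13) as part of the background in \S\ref{NRGITlocalisation}, so there is no in-paper proof to compare against. Your argument is the standard one and is exactly the Martin-type argument \cite{SM} that \cite{bkcoh} adapts to the non-reductive setting: realise $X/\!/\hU$ as the transverse zero locus of a section of the bundle over $X/\!/\CC^*$ via Proposition~\ref{propmartin}(1), identify the Poincar\'e dual of the submanifold with the Euler class of that bundle, and apply the projection formula. That is the right approach and your handling of the orbifold/rational-coefficient issue is also correct.

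One point worth tightening: Proposition~\ref{propmartin}(1) gives $\mathcal{N}(i)\simeq V_\lieu^*$, so the class that appears directly as $i_*(1)$ is $\mathrm{Euler}(V_\lieu^*)=(-1)^{\dim_\CC\lieu}\,\mathrm{Euler}(V_\lieu)$. You sweep this under ``the conventions fixed in Theorem~\ref{thm:cohomologyrings}'', which is fine in practice (the sign is absorbed into the orientation convention on the quotient, and in this paper's applications it matches the computation $\mathrm{Euler}(V_\lieu)=(k-1)!\,z^{k-1}$ in \S\ref{sec:snowman}), but in a self-contained write-up you should make that sign explicit rather than leave it implicit.
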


\begin{remark} 
Theorem \ref{thm:integration}  
can be generalised to allow the triviality assumption for the stabiliser in $\hU$ of a generic $x \in X$ to be omitted; then the formula for $\int_{X/\!/\hU}a$ is multiplied by a strictly positive rational depending on the sizes of the stabilisers in $\hU$ and $\CC^*$ of a generic $x \in X$.
\end{remark}

Finally, we have residue formulas for the intersection pairings on the quotient $X/\!/\CC^*$. Note that $\hU$ is homotopy equivalent to $\CC^*$ and to $S^1$ so that $\hU$-equivariant cohomology is isomorphic to  $\CC^*$-equivariant cohomology  and to $S^1$-equivariant cohomology.
There are two surjective ring homomorphisms 
\[\kappa_{\CC^*}: H_{S^1}^*(X;\QQ) \to H^*(X/\!/\CC^*;\QQ) \text{  and  }  \kappa_{\hU}: H_{\hU}^*(X;\QQ)=H_{S^1}^*(X;\QQ) \to H^*(X/\!/\hU;\QQ)\]
from the $S^1$-equivariant cohomology of $X$ to the ordinary cohomology of the corresponding GIT quotients. The fixed points of the maximal compact subgroup $S^1$ of $\hU$ on $X\subseteq \PP^n$ correspond to the weights of the $\CC^*$ action on $X$, and since this action is well-adapted, these weights satisfy
\[\omega_{\min}=\omega_0<0<\omega_1 <\ldots <\omega_{n}.\]
We can represent elements of $H_{\hU}^*(X;\QQ)=H_{S^1}^*(X;\QQ)$ as polynomial functions on the Lie algebra of $\CC^*$ whose coefficients are differential forms on $X$ and which are equivariantly closed.

\begin{theorem}[\cite{bkcoh}, Theorem 7.15 and Corollary 7.16]
\label{jeffreykirwannonred}
Let $X$ be a nonsingular projective variety (assumed connected) endowed with a well-adapted action of $\hU=U \rtimes \CC^*$ such that $\hU$-stability=$\hU$-semistability holds (in the sense of Definitions \ref{cond star} and \ref{def:welladaptedaction}).   
Let $z$ be the standard coordinate on the Lie algebra of $\CC^*$. 
Given any $\hU$-equivariant cohomology class $\eta$ on $X$ represented by an equivariant differential form $\eta(z)$ whose degree is the dimension of $X/\!/\hU$, we have 
\[\int_{X/\!/\hU} \kappa_{\hU} (\eta) 
= 
n_{\hU}
\res_{z=0} \int_{F_{\min}}\frac{i_{F_{\min}}^* (\eta(z) \cup \mathrm{Euler}(V^*_\lieu)(z))}{\mathrm{Euler}(\mathcal{N}_{F_{\min}})(z)} dz \]
where $F_{\min} = Z_{\min}(X)$ is the union of those connected components of the fixed point locus $X^{\CC^*}$ on which the $S^1$-moment map takes its minimum value $\omega_{\min}$, and $n_{\hU}$ is a strictly positive rational number  which  depends only on $\hU$ and the size of the stabiliser in $\hU$ of a generic $x \in X$. 
 Here $\mathcal{N}_{F_{\min}}$ is the normal bundle to $F_{\min}$ in $X$ and $V^*_\lieu \to F_{\min}$ is the equivariant vector bundle given by $F_{\min} \times \lieu$.
\end{theorem}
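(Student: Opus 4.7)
The plan is to combine Theorem~\ref{thm:integration} with the classical abelian (Jeffrey--Kirwan style) residue formula applied to the reductive GIT quotient $X/\!/\CC^*$. Concretely, I would proceed as follows.

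First, apply Theorem~\ref{thm:integration} to rewrite the integral. Given the equivariant form $\eta(z)$ on $X$, choose a lift $\tilde{a} = \kappa_{\CC^*}(\eta) \in H^*(X/\!/\CC^*)$ of $a = \kappa_{\hU}(\eta) \in H^*(X/\!/\hU)$; the two maps fit compatibly because $H^*_{\hU}(X;\QQ) = H^*_{S^1}(X;\QQ)$ and the inclusion $i:X/\!/\hU \hookrightarrow X/\!/\CC^*$ from Proposition~\ref{propmartin} satisfies $i^* \kappa_{\CC^*} = \kappa_{\hU}$. Moreover, the Euler class of $V_\lieu$ has a canonical equivariant lift since $V_\lieu$ is the bundle associated to a $\CC^*$-representation, so $\mathrm{Euler}(V_\lieu)$ is represented by an equivariantly closed form $\mathrm{Euler}(V_\lieu)(z)$. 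This reduces the problem to showing
\[
\int_{X/\!/\CC^*} \kappa_{\CC^*}\bigl(\eta(z) \cup \mathrm{Euler}(V_\lieu)(z)\bigr) = n_{\CC^*} \res_{z=\infty} \int_{F_{\min}} \frac{i_{F_{\min}}^*\bigl(\eta(z) \cup \mathrm{Euler}(V_\lieu)(z)\bigr)}{\mathrm{Euler}(\mathcal{N}_{F_{\min}})(z)} dz.
\]

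Second, apply the abelian Jeffrey--Kirwan residue formula for the reductive $\CC^*$-quotient $X/\!/\CC^*$ to the class $\alpha(z) := \eta(z) \cup \mathrm{Euler}(V_\lieu)(z)$. By the standard argument combining $\kappa_{\CC^*}$-surjectivity with Atiyah--Bott localisation on $X$, the pairing $\int_{X/\!/\CC^*} \kappa_{\CC^*}(\alpha)$ equals, up to the generic stabiliser order $n_{\CC^*}$, a sum of residues of localisation contributions from the fixed point components $F \subseteq X^{\CC^*}$. The key input from well-adaptedness is that the $\CC^*$-weights satisfy $\weight_{\min} = \weight_0 < 0 < \weight_1 < \cdots < \weight_{\max}$ in a linearisation twisted so that $0$ lies just above $\weight_{\min}$; this is precisely the hypothesis that makes the residue at $z=\infty$ pick up only the contribution of $F_{\min}$ and kill all other fixed components, for degree-of-$\alpha$ forms of the correct dimension. (The cancellation of contributions from $F \neq F_{\min}$ is the $\CC^*$ case of the Jeffrey--Kirwan/Martin principle: terms from weights $\weight > 0$ produce poles at finite points and their residue-at-infinity sum vanishes against the degree count.)

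Third, combine the two steps. Substituting the localisation output for $\int_{X/\!/\CC^*} \kappa_{\CC^*}(\alpha)$ into the identity from Theorem~\ref{thm:integration} yields exactly the formula of the theorem, with the integrand organised as $i_{F_{\min}}^*(\eta \cup \mathrm{Euler}(V_\lieu)) / \mathrm{Euler}(\mathcal{N}_{F_{\min}})$ and the overall normalisation $n_{\CC^*}$. Corollary~5.15 of \cite{bkcoh} is then simply the specialisation / restatement of this identity.

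The main obstacle is the second step: one must justify rigorously that for well-adapted linearisations the only surviving contribution in the Jeffrey--Kirwan sum is the $F_{\min}$ term, and that it appears precisely as a residue at $z=\infty$ rather than at finite poles. This rests on the sign pattern of the shifted weights $\weight_j - \chi/c$ (negative only for $j=0$), on the degree of $\alpha(z)$ matching $\dim X/\!/\hU = \dim X/\!/\CC^* - \mathrm{rk}(V_\lieu)$ so that the rational function in $z$ has the right growth at infinity, and on a careful contour argument treating $z$ as a complex variable. Once this residue reduction is in place, everything else is formal manipulation with the ingredients from Proposition~\ref{propmartin} and Theorem~\ref{thm:integration}.
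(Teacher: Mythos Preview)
The paper does not give its own proof of this statement: Theorem~\ref{jeffreykirwannonred} is quoted from \cite{bkcoh} (as Theorem~5.14 and Corollary~5.15 there) and used as a black box, alongside Theorems~\ref{thm:cohomologyrings} and~\ref{thm:integration}, in the summary section on cohomology of non-reductive quotients. So there is nothing in the present paper to compare your argument against line by line.

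That said, your outline is the expected route and matches the architecture of the results as presented: first invoke Theorem~\ref{thm:integration} to trade the $\hU$-quotient integral for an integral over $X/\!/\CC^*$ with the extra factor $\mathrm{Euler}(V_\lieu)$, then apply the abelian (rank-one Jeffrey--Kirwan/Kalkman) residue formula to the $\CC^*$-quotient. Your identification of the crucial point is also correct: the well-adapted condition forces exactly one fixed component, $F_{\min}$, to sit on the negative side of~$0$, so the JK sum over fixed components collapses to the single $F_{\min}$ term, and the degree matching $\dim X/\!/\hU = \dim X/\!/\CC^* - \dim U$ is what makes the rational function in $z$ have the right order at infinity for the residue to be defined. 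One small point to tighten: Theorem~\ref{thm:integration} as stated assumes the generic $\hU$-stabiliser is trivial, whereas Theorem~\ref{jeffreykirwannonred} allows a generic stabiliser of order $n_{\hU}$ (with $n_{\CC^*}$ appearing in the formula); you should either invoke the generalisation mentioned in the Remark following Theorem~\ref{thm:integration}, or track the stabiliser orders explicitly when passing from the $\hU$-integral to the $\CC^*$-integral.
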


\begin{remark} \label{rem:bkcoh}
In the situation of Theorem \ref{jeffreykirwannonred}, we can rewrite this formula as 
\begin{equation}  \label{niceversion} \int_{X/\!/\hU} \kappa_{\hU} (\eta) 
=   n_{\hU}
\res_{z=0} \int_{F_{\min}}\frac{i_{F_{\min}}^* (\eta(z)) 
}{\mathrm{Euler}(\mathcal{N}_{UF_{\min}})(z)} dz \end{equation}
where $\mathcal{N}_{UF_{\min}}$ is the normal to $UF_{\min}$ in $X$, since the restriction to $F_{\min}$ of the tangent bundle to $UF_{\min}$ is the direct sum of the tangent bundle to $F_{\min}$ and the trivial bundle $V_\lieu$ with fibre the Lie algebra of $U$. We can also identify $F_{\min}$ with $UF_{\min}/U$ and identify $\mathcal{N}_{UF_{\min}}$ with the normal to $UF_{\min}/U$ in $X^0_{\min}/U$.

Notice in addition that the surjection $ \kappa_{\hU}: H_{\hU}^*(X;\QQ)=H_{S^1}^*(X;\QQ) \to H^*(X/\!/\hU;\QQ)$ is the composition of two surjections
$$  H_{\hU}^*(X;\QQ)=H_{S^1}^*(X;\QQ) \to H_{\hU}^*(X^0_{\min};\QQ)=H_{S^1}^*(X^0_{\min};\QQ) \to  H^*(X/\!/\hU;\QQ),$$
and that $F_{\min} \subseteq X^0_{\min}$. Thus to calculate any intersection pairing 
$$\langle \kappa_{\hU}(\alpha),\kappa_{\hU}(\beta) \rangle = \int_{X/\!/ \hU} \kappa_{\hU}(\alpha \beta)$$ using (\ref{niceversion}), we only need to know the restriction to $X^0_{\min}$ of $S^1$-equivariant cohomology classes $\alpha$ and $\beta$ which represent $\kappa_{\hU}(\alpha)$ and $\kappa_{\hU}(\beta)$.
\end{remark}

\section{Non-reductive GIT compactification of the jet differentials bundle}\label{quotient}\label{sec:snowman}

Let $X$ be a nonsingular complex projective variety of dimension $n$. In this section we will use non-reductive GIT to construct a projective completion of the quasi-projective quotient $J_k^\reg X/\diff_k(1)$, introduced in \S \ref{subsec:jetdiff}. Recall that $\diff_X$ denotes the principal $\diff_k(n)$-bundle over $X$ formed by all local polynomial coordinate systems on $X$, 
 and that
\[\calx_k^{\text{reg}}=J_k^{\text{reg}}X/\diff_k(1) \cong \diff_X \times_{\diff_k(n)} X_k^{\text{reg}}\]
where $X_k^{\text{reg}}=J_k^{\text{reg}}(1,n)/\diff_k(1))$ is isomorphic to each fibre of $\calx_k^{\text{reg}}$ over $X$. 
We will construct a projective completion  
\[\calx_k^{\GIT}
 \cong
 \diff_X \times_{\diff_k(n)} X_k^{\GIT}\]
of $\calx_k^{\text{reg}}$ where the fibre $X_k^{\GIT}$ 
 of $\calx_k^{\GIT}$ over $X$
is a non-reductive GIT quotient of a blow-up of the projective space
\[ \PP(\CC \oplus J_k(1,n)) = \PP(\CC \oplus \Hom(\CC^k, \CC^n))=\PP[x:v_1:v_2:\ldots :v_k]\]
by the action of 
\[\hU=\diff_k(1) = \left\{ \left(
\begin{array}{ccccc}
\alpha_1 & \alpha_2   & \alpha_3          & \ldots & \alpha_k \\
0        & \alpha_1^2 & 2\alpha_1\alpha_2 & \ldots & 2\alpha_1\alpha_{k-1}+\ldots \\
0        & 0          & \alpha_1^3        & \ldots & 3\alpha_1^2\alpha_ {k-2}+ \ldots \\
0        & 0          & 0                 & \ldots & \cdot \\
\cdot    & \cdot   & \cdot    & \ldots & \alpha_1^k
\end{array} 
 \right)  \begin{array}{c}  : \alpha_1,\ldots,\alpha_k \in \CC, \\ \alpha_1 \neq 0 \end{array} \right\}.
\]
Here $v_1,\ldots, v_k \in \CC^n$ are vectors representing the columns of a matrix $M \in \Hom(\CC^k, \CC^n)$, and $x$ is the compactifying coordinate, while $\hU$ acts via the right action
\[ [x:M]\cdot \hat{u}=[x:M\hat{u}] \text{ for } \hat{u} \in \hU\]
or equivalently via the left action
\[\hat{u}\cdot [x:M]=[x:M(\hat{u})^{-1}] \text{ for } \hat{u} \in \hU.\]

\begin{remark} \label{rem:projcompletion}
We need to be careful here with the construction of $\calx_k^{\GIT}$ as $ \diff_X \times_{\diff_k(n)} X_k^{\GIT}$. This is because $\diff_k(n)$ does not act naturally on the projective space
$\PP(\CC \oplus J_k(1,n)) = \PP(\CC \oplus \Hom(\CC^k, \CC^n))$ or its blow-up, since its action on $J_k(1,n)$ is not linear, and the identification of a fibre of $J_kX$ with $\Hom(\CC^k, \CC^n))$ depends on a choice of local coordinates on $X$. Nonetheless we will find that $\diff_k(n)$ does act on $X_k^{\GIT}$ compatibly with its action on $J_k^\reg (1,n)/\diff_k(1)$.
\end{remark}

We want to apply the results of non-reductive GIT described in $\S$3, which are stated for left actions. For this we need a one-parameter subgroup $\lambda:\CC^* \to \hU$ whose adjoint action on the Lie algebra of $U$ has only strictly positive weights; we can take
\[\lambda(t)=\left(
\begin{array}{ccccc}
t^{-1} &0   & 0          & \ldots & 0 \\
0        & t^{-2} & 0 & \ldots & 0 \\
0        & 0          & t^{-3}        & \ldots & 0 \\
0        & 0          & 0                 & \ldots & \cdot \\
\cdot    & \cdot   & \cdot    & \ldots &t^{-k}
\end{array}
 \right),
\]
and then 
\[\lambda(t)  \cdot [x:M]=[x:M  \left(
\begin{array}{ccccc}
t &0   & 0          & \ldots & 0 \\
0        & t^{2} & 0 & \ldots & 0 \\
0        & 0          & t^{3}        & \ldots & 0 \\
0        & 0          & 0                 & \ldots & \cdot \\
\cdot    & \cdot   & \cdot    & \ldots &t^{k}
\end{array}
 \right) ].
\]
 The weights of this (left)  action of the one-parameter subgroup of $\hU$ defined by $\lambda$  are $\{0, 1, 2, \ldots, k\}$. The minimal weight space is the point
\[Z_{\min}=\{ [1:0:\ldots :0] \} \]
and the $U$-stabiliser of this point is $
U$, so this action does not satisfy the \lq semistability coincides with stability' condition (see Definition \ref{cond star}) which is required for the main results described in $\S\S$3 and 4.
We wish to construct a blow-up of 
the projective space
$\PP(\CC \oplus J_k(1,n)) = \PP(\CC \oplus \Hom(\CC^k, \CC^n))$
in order to satisfy this condition.  Following \cite{bdhk1} we try blowing up the projective space $\PP(\CC \oplus \Hom(\CC^k, \CC^n))$ along $Z_{\min}$ to  get 
\[\tPP=\blow_{[1:0:\ldots :0]}\PP(\CC \oplus \Hom(\CC^k, \CC^n))=\{([x:v_1,\ldots, v_k],[w_1,\ldots w_k]):w_i\otimes v_j=w_j\otimes v_i \mbox{ for }  1\le i<j\le k\}\]
embedded in $\PP^{kn} \times \PP^{kn-1} \subseteq \PP^{(kn+1)kn-1}$. We fix an ample linearisation $L=\calo_{\PP^{kn}}(1) \otimes \calo_{\PP^{kn-1}}(1)$ on $\PP^{kn} \times \PP^{kn-1}$ and restrict it to $\tPP$. The minimal weight space for the action of $\lambda(\CC^*)$ on $\tPP$ is the intersection $\tZmin$ of the exceptional divisor $E$ and the strict transform of $\PP[x:v_1:0:\cdots:0] \subseteq \PP(\CC \oplus \Hom(\CC^k, \CC^n))$:
$$ \tZmin=\{([1:0:\ldots :0],[w_1:0:\ldots:0]): w_1 \in \CC^n, \,\, w_1 \neq 0 \}\subset E \subset \tPP. $$
The $U$-stabiliser of any point in $\tZmin$ is trivial, and hence stability coincides with semistability for the induced $\hU$ action on $\tPP$. Thus we can apply Theorem \ref{mainthm} to obtain a  non-reductive GIT quotient 
\begin{equation} \label{XGIT}  X_k^{\GIT}=\tPP/\!/\hU = \tPP^{s,\hU}/\hU \end{equation}
with respect to a well-adapted shift of $L$. Then $\tPP/\!/\hU$ is a projective variety and is a geometric quotient by $\hU$ of the open subvariety $\tPP^{s,\hU} = \tPP^{0}_{\min} \setminus U Z_{\min}(\tPP)$ of $\tPP$, which contains $J_k^{\text{reg}}(1,n)$. Thus $\tPP/\!/\hU$ is a projective completion of $J_k^{\text{reg}}(1,n)/\diff_k(1)$.

\begin{remark} \label{rem:projcompletion2}
The open subvariety $\tPP^{s,\hU} = \tPP^{0}_{\min} \setminus U Z_{\min}(\tPP)$ of $\tPP$ contains $J_k^{\text{reg}}(1,n)$ and is contained in the blow-up of the affine space $J_k(1,n)\cong \Hom(\CC^k, \CC^n)$ at the origin; here the origin represents the trivial jet which is identically 0. The action of $\diff_k(n)$ on $J_k(1,n)$ is not linear but it fixes the origin and hence there is an induced action on the blow-up of $J_k(1,n)$ at the origin. This action preserves $\tPP^{s,\hU} = \tPP^{0}_{\min} \setminus U Z_{\min}(\tPP)$ and commutes with the action of $\diff_k(1)$, so there is an induced action of $\diff_k(n)$ on the geometric quotient $X_k^{\GIT}=\tPP/\!/\hU = \tPP^{s,\hU}/\hU$.
\end{remark}

\begin{definition} \label{defn:XGIT}
Let $\calx_k^{\GIT} =   \diff_X \times_{\diff_k(n)} X_k^{\GIT} $
where $X_k^{\GIT}$ is defined at (\ref{XGIT}) above.
\end{definition}

Thus $\calx_k^{\GIT}$ fibres over $X$ with fibre $X_k^{\GIT} $, and can be regarded as a projective completion of $\calx_k^{\text{reg}}=J_k^{\text{reg}}X/\diff_k(1) \cong \diff_X \times_{\diff_k(n)} X_k^{\text{reg}}$. It can also be identified with the geometric quotient by $\diff_k(1)$ of an open subvariety of the blow-up of the jet bundle $J_kX$ along its zero section. The jet bundle $J_kX$ is not a vector bundle over $X$, since its structure group is $\diff_k(n)$ which is not a subgroup of $\GL(n)$, but nevertheless it has a well-defined zero section.

\begin{remark} \label{lineNX}
Recall from Remark \ref{lineN} that we have fixed a positive integer $N$ such that there is an ample line bundle $\calo_{X_k^{\GIT}}(1)$ on $X_k^{\GIT}$ which pulls back to $L^{\otimes N}$ on $\tPP^{s,\hU}$. Here $\calo_{X_k^{\GIT}}(1)$ can be constructed as the geometric quotient of the restriction of $L^{\otimes N}$ to $\tPP^{s,\hU}$ by the well-adapted linear action of $\diff_k(1)$, where $N$ is sufficiently divisible that for every $x \in \tPP^{s,\hU}$ the finite stabiliser $\stab_{\hU}(x)$ acts trivially on the fibre of $L^{\otimes N}$ at $x$. 
 Then sections of powers of $L^{\otimes N}$ which are $\hU$-invariant for the well-adapted linearisation define sections of powers of $\calo_{X_k^{\GIT}}(1)$. More precisely, if $j $ is a natural number, sections of $\calo_{X_k^{\GIT}}(j)$ pull back to $\hU$-invariant sections of the restriction of  $L^{\otimes Nj}$ to $\tPP^{s,\hU}$, and these all extend to $\hU$-invariant sections of  $L^{\otimes Nj}$
  over $\tPP$; here  every $\hU$-invariant section of 
 $L^{\otimes Nj}$
  over $\tPP$ vanishes on the complement of $\tPP^{s,\hU}$. Since $\tPP^{s,\hU}$ is a $\diff_k(n) \times \diff_k(1)$-invariant open subset of the blow-up of $J_k(1,n)$ at 0 (where $\hU = \diff_k(1)$), it follows that sections of the restriction to the blow-up of $J_k(1,n)$ at 0 of $L^{\otimes Nj}$, which are $\hU$-invariant for the well-adapted linearisation, define sections of  $\calo_{X_k^{\GIT}}(j)$. Since $L$ is given by $\calo_{\PP^{kn}}(1) \otimes \calo_{\PP^{kn-1}}(1)$ where the restriction of the line bundle $\calo_{\PP^{kn}}(1)$ to $J_k(1,n)$ is trivial, the restriction of $L$ to the blow-up of $J_k(1,n)$ at 0 is $\calo(-E)$ where $E$ is the exceptional divisor. Furthermore sections of $\calo_{X_k^{\GIT}}(j)$ pull back to $\hU$-invariant sections of the restriction of  $L^{\otimes Nj}$ to $\tPP^{s,\hU}$ which extend by zero to $\hU$-invariant sections of  $\calo(-NjE)$
  over  the blow-up of $J_k(1,n)$ at 0. This gives us an identification of $$\bigoplus_{j=1}^m H^0(X_k^{\GIT},\calo_{X_k^{\GIT}}(j)) = H^0(X_k^{\GIT}, \bigoplus_{j=1}^m \calo_{X_k^{\GIT}}(j))$$ with a subspace of the space of  $\hU$-invariant sections of  $\bigoplus_{j=1}^m \calo(-NjE)$
  over  the blow-up of $J_k(1,n)$ at 0 which are polynomial of degree at most $Nm$ in the coordinates on $J_k(1,n)$. 

Recall also that since $\tPP^{s,\hU}$ is a $\diff_k(n) \times \diff_k(1)$-invariant open subset of the blow-up of $J_k(1,n)$ at 0,  there is an identification
$$ \tilde{J}_k^{s,\hU}X \cong \diff_X \times_{\diff_k(n)} \tPP^{s,\hU}$$
where $ \tilde{J}_k^{s,\hU}X$ is a $\diff_k(1)$-invariant open subset of the blow-up $\tilde{J}_kX$ of $J_kX$ along its zero section. 
There is thus a relatively ample line bundle $\calo_{\calx_k^{\GIT}}(1)$ on $\calx_k^{\GIT} \to X$, constructed as the geometric quotient of 
$$\diff_X \times_{\diff_k(n)} L^{\otimes N}|_{\tPP^{s,\hU}}$$
by the action of $\diff_k(1)$, which   
pulls back on each fibre of $\calx_k^{\GIT} \to X$
 to  $L^{\otimes N}$ (or equivalently to $\calo(-NE)$)  on $\tPP^{s,\hU} = \tPP^{0}_{\min} \setminus U Z_{\min}(\tPP)$.   
 Thus sections of $\bigoplus_{j=1}^m \calo_{\calx_k^{\GIT}}(j)$ pull back on  each fibre of $\calx_k^{\GIT} \to X$
 to restrictions of $\hU$-invariant sections of  $\bigoplus_{j=1}^m \calo(-NjE)$
  over  the blow-up of $J_k X$ along its zero section, which are polynomial of degree at most $Nm$ in the coordinates on the corresponding fibre of $J_kX \to X$.
 
 The first Chern class $c_1(\calo_{\calx_k^{\GIT}}(1))$ restricts to $c_1(\calo_{X_k^{\GIT}}(1))$ on the fibre $X_k^{\GIT}$, and this is given in the notation of Theorem  \ref{jeffreykirwannonred} and Remark \ref{rem:bkcoh} by 
 \begin{equation} \label{firstChern}  c_1(\calo_{X_k^{\GIT}}(1)) = \kappa_{\hU}|_{\tPP^0_{\min}} (c_1^{S^1}(L^{\otimes N}|_{\tPP^0_{\min}})) \end{equation}
 where $c_1^{S^1}(L^{\otimes N})$ is the $S^1$-equivariant first Chern class of $L^{\otimes N}$, and $c_1^{S^1}(L^{\otimes N}|_{\tPP^0_{\min}}) = c_1^{S^1}(L^{\otimes N})|_{\tPP^0_{\min}}$ is its restriction to $\tPP^0_{\min}$. From Remark \ref{rem:projcompletion2}, since $L=\calo_{\PP^{kn}}(1) \otimes \calo_{\PP^{kn-1}}(1)$ where the restriction of the line bundle $\calo_{\PP^{kn}}(1)$ to $J_k(1,n)$ is trivial, $c_1^{S^1}(L^{\otimes N})$ has the same restriction to the blow-up of $J_k(1,n)$ as the power 
 $$c_1^{S^1}( \calo_{\PP^{kn-1}}(1))^N$$ of the $S^1$-equivariant first Chern class of the exceptional divisor $E$ for the  blow-up $\tilde{J}_kX$ of $J_kX$ along its zero section.
\end{remark}

Now let's consider $X_k^{\GIT}$  in the simple cases when $k=2,3$.

\subsection{Description of $X_2^{\GIT}$}\label{subsection:example} Recall from \S \ref{quotient} that the reparametrisation group $\hU=\diff_2(1) =\left\{u(\a_1,\a_2)=\left(\begin{array}{cc} \a_1 & \a_2 \\ 0 & \a_1^2 \end{array}\right): \a_1 \in \CC^*,\a_2 \in \CC \right\}$ acts on the projective space
$\PP(\CC \oplus \Hom(\CC^2, \CC^n))=\PP[x:f':f'']$ via 
\[[x:f':f''] \cdot u(\a_1,\a_2)=[x:\a_1f':\a_1^2f''+\a_2f']\]
Note that $f',f'' \in \CC^n$ form the columns of an $n \times 2$ matrix $M \in \Hom(\CC^2, \CC^n)$, and $\diff_2(1)$ acts on the right via matrix multiplication.
Thus  the grading $1$-parameter subgroup is $\l(t)=\left(\begin{array}{cc} t & 0 \\ 0 & t^2 \end{array} \right)$, whose weight on the $1$-dimensional Lie algebra $\mathrm{Lie}(U)$ is $1$. The minimal weight space  $Z_{\min} = \{[1:0:0]\}$ consists of a single point and the blow-up of $\PP(\CC \oplus \Hom(\CC^k, \CC^n))$ at this point is $\tPP=\blow_{[1:0:0]}\PP=\{([x:v_1:v_2],[w_1, w_2]):w_1\otimes v_2=w_2\otimes v_1\}$. The minimal weight space in $\tPP$ is 
\[\tZmin=\{([1:0:0],[w_1:0]): w_1 \in \CC^n, \,\, w_1 \neq 0 \}\subseteq E \subseteq \tPP,\] 
which sits in the exceptional divisor $E$ for the blow-up, and $\tZmin \simeq \PP^{n-1}$ is a projective space. The maximal unipotent is $$U=\left\{ \left(\begin{array}{cc} 1 & \a_2 \\ 0 & 1 \end{array}\right) : \a_2 \in \CC \right\},$$ and the $U$-stabiliser of any point in $\tZmin$ is trivial. Hence stability coincides with semistability for the induced $\diff_2(1)$ action on $\tPP$. The projection $p: \tPP_{\min}^0 \to \tZmin$ 
 has affine fibres and $U$ acts fibrewise, so $p$ induces a fibration $\bar{p}: X_2^{\GIT} = \tPP /\!/ \diff_2(1) \to \tZmin$ where the fibre over $z$ is isomorphic to a (weighted) projective space $((p^{-1}(z)\setminus Uz)/U)/\l(\CC^*)$. More concretely, 
\[p^{-1}([1:0:0],[w_1:0]) \simeq \{[\lambda w_1:w_2] \in \Hom(\CC^2, \CC^m): \l \in \CC, w_2\in \CC^n\} \simeq \CC^{n+1}\]
Assume $w_1=(w_1^1,\ldots, w_1^n)^T$ with $w_1^1 \neq 0$. Then $(p^{-1}(z)/U)$ can be identified with a slice for the $U$ action on $p^{-1}(z)$: for any $(\l w_1. w_2) \in p^{-1}(z)$
there is a unique $\left(\begin{array}{cc} 1 & \a_2 \\ 0 & 1 \end{array}\right) \in U$ such that 
\begin{equation}\label{slice}
\left(\begin{array}{cc} \l w_1^1 & w_2^1 \\ \l w_1^2 & w_2^2 \\ . &. \\ \l w_1^n & w_2^n \end{array} \right) \cdot \left(\begin{array}{cc} 1 & \a_2 \\ 0 & 1 \end{array}\right)=\left(\begin{array}{cc} \l w_1^1 & 0 \\ \l w_2^2 & \tilde{w}_2^2 \\ . &. \\ \l w_1^2 & \tilde{w}_2^n \end{array} \right)
\end{equation}
Hence $p^{-1}(z)/U \simeq \CC^n$ with coordinates $\l, \tilde{w}_2^2, \ldots, \tilde{w}_2^n$. The weight of the well-adapted shift of $\l(\CC^*)$ on $\l$ is $-\epsilon<0$ and $1-\epsilon>0$ on $\tilde{w}_2^2, \ldots, \tilde{w}_2^n$, hence $(p^{-1}(z)/U)/\!/\CC^*=\PP^{n-1}$. Thus $X_k^{\GIT}$ fibres over $\tZmin =\PP^{n-1}$ with fibres isomorphic to $\PP^{n-1}$. 

Note that in this case $N=1$, and sections of $\calo_{X_2^{\GIT}}(1)$ are given by coordinates on this affine $U$-slice. In \eqref{slice} above 
\[\tilde{w}_2^i=w_2^i-\frac{w_2^1}{w_1^1}w_1^i\]
and hence the minor $\left|\begin{array}{cc} w_1^1 & w_2^1 \\ w_1^i & w_2^i \end{array} \right|=w_2^i w_1^1-w_1^i w_2^1$ gives such a section. This is indeed invariant under $\diff_k(1)$, its weighted degree is $1+2=3$, corresponding to the bundle $E_{2,3}$ of semi-invariant jet differentials. The sections of $\calo_{X_2^{\GIT}}(m)$ are given by degree $m$ polynomials in the $2 \times 2$ minors of $(w_1,w_2)$.

\subsection{Description of $X_3^{\GIT}$} When $k=3$, the reparametrisation group 
 $$\hU=\diff_3(1)=\left\{u(\a_1,\a_2,\a_3)=\left(\begin{array}{ccc} \a_1 & \a_2 & \a_3 \\ 0 & \a_1^2 & 2\a_1\a_2 \\ 0 & 0 & \a_1^3\end{array}\right): \a_1 \in \CC^*,\a_2,\a_3 \in \CC \right\}$$ acts on the projective space
$\PP(\CC \oplus \Hom(\CC^3, \CC^n))=\PP[x:v_1:v_2:v_3]$ via 
\[[x:v_1:v_2:v_3] \cdot u(\a_1,\a_2,\a_3)=[x:\a_1v_1:\a_1^2v_2+\a_2v_1:\a_1^3c_3+2\a_1\a_2v_2+\a_3v_1].\]
With $\l(t)=\left(\begin{array}{ccc} t & 0 & 0 \\ 0 & t^2 & 0\\ 0 & 0 & t^3\end{array} \right)$ the minimal weight space on the blow-up $\tPP$ is 
\[\tZmin=\{([1:0:0:0],[w_1:0:0]): w_1 \in \CC^n, \,\, w_1 \neq 0 \}\subseteq E \subseteq \tPP.\]
The fibre of the projection $p$ is 
\[p^{-1}([1:0:0:0],[w_1:0:0]) \simeq \{[\lambda w_1:w_2:w_3] \in \Hom(\CC^3, \CC^m): \l \in \CC, w_2,w_3\in \CC^n\} \simeq \CC^{2n+1}\]
Assume $w_1^1 \neq 0$, then there is a unique $\left(\begin{array}{ccc} 1 & \a_2 & \a_3\\ 0 & 1 & 2\a_2 \\ 0 & 0 & 1\end{array}\right) \in U$ which eliminates the first row: 
\begin{equation}\label{eliminate3}
\left(\begin{array}{ccc} \l & w_2^1 & w_3^1 \\ w_1^2 & w_2^2 & w_3^2 \\ . &. & .\\ w_1^n & w_2^n & w_3^n \end{array} \right) \cdot \left(\begin{array}{ccc} 1 & \a_2 & \a_3 \\ 0 & 1 & 2\a_2 \\ 0 & 0 & 1 \end{array}\right)=\left(\begin{array}{ccc} \l & 0 & 0 \\ w_1^2 & \tilde{w}_2^2 & \tilde{w}_3^2 \\ . &. & . \\ w_1^n &  \tilde{w}_2^n & \tilde{w}_3^n \end{array} \right)
\end{equation}
Hence $p^{-1}(z)/U \simeq \CC^{2n-1}$ with coordinates $\l, \tilde{w}_2^2, \ldots, \tilde{w}_2^n, \tilde{w}_2^2, \ldots, \tilde{w}_2^n$ and adapted $\l(\CC^*)$-weight vector $(-\epsilon, 1-\epsilon,\ldots, 1-\epsilon, 2-\epsilon, \ldots, 2-\epsilon)$. Therefore $(p^{-1}(z)/U)/\!/\CC^*=\PP[1,\ldots, 1,2,\ldots, 2]$ is a weighted projective space. Thus $X_3^{\GIT}$ fibres over $\tZmin =\PP^{n-1}$ with fibres isomorphic to weighted projective spaces.  

The sections of $\calo_{X_3^{\GIT}}(1)$ come again from the coordinates on the constructed affine $U$-slice: $\tilde{w}_2^i$ correspond to $2 \times 2$ minors, and the new invariants come from $\tilde{w}_3^i$. From \eqref{eliminate3} one obtains 
\[\a_2=-\frac{w_2^1}{w_1^1}, \ \ \ \ \a_3=\frac{2u_1^2-v_1w_1}{w_1^2}\]
and hence 
\[\tilde{w}_3^i=w_3^i-\frac{2w_2^iw_2^1+w_1^iw_3^1}{w_1^1}+\frac{2w_1^i(w_2^1)^2}{(w_1^1)^2}\]
which gives the invariant section
\begin{equation}\label{newinvs}
w_1^1 \left|\begin{array}{cc} w_1^1 & w_3^1 \\ w_1^i & w_3^i \end{array} \right| +2w_2^1 \left|\begin{array}{cc} w_1^1 & w_2^1 \\ w_1^i & w_2^i \end{array} \right|
\end{equation}
To get all sections of powers of $\calo_{X_3^{\GIT}}(1)$, we run the upper index pair $(1,i)$ in this formula over all pairs $(i,j)$, $1\le i,j \le 3$, $i\neq j$. One can check that these are indeed invariant under $U$ with the weighted degree  $1+1+3=2+2+1=5$, corresponding to the bundle $E_{3,5}$ of semi-invariant jet differentials.

\subsection{$X_k^{\GIT}$ for higher $k$}    
For general $k\geq 2$ a similar argument gives the following picture:

\begin{proposition}\label{prop:xkgit} $X_k^{\GIT}$ is a fibration over $\tZmin \simeq \PP^{n-1}$ with fibres isomorphic to a weighted projective space $\PP[1,\ldots, 1,2,\ldots, 2, \ldots, k-1,\ldots, k-1]$ with all weights repeated $n-1$ times. 
Sections of $\calo_{X_k^{\GIT}}$ are given by homogenisation of coordinates on the canonical section constructed as above for $k= 2,3$.
\end{proposition}

Note that
\begin{equation} \label{dimxgit} \dim X_k^{\GIT} = k(n-1) \mbox{ and } \dim \calx_k^{\GIT} = k(n-1) +n. \end{equation}

\section{Reducing hyperbolicity to intersection theory}\label{sec:existence}

Now let $X\subseteq \PP^{n+1}$ be a smooth projective hypersurface of dimension $n$ and degree $\deg(X)=d$ in $ \PP^{n+1}$. Recall from Remark \ref{lineNX} that there is an integer $N>0$ and an ample line bundle $\calo_{X_k^{\GIT}}(1)$ on $X_k^{\GIT}$ which pulls back to $L^{\otimes N}$ on $\tPP^{s,\hU}$, constructed as the geometric quotient of the restriction of $L^{\otimes N}$ to $\tPP^{s,\hU}$ by the well-adapted linear action of $\diff_k(1)$. Here $N$ is sufficiently divisible that for every $x \in \tPP^{s,\hU}$ the finite stabiliser $\stab_{\hU}(x)$ acts trivially on the fibre of $L^{\otimes N}$ at $x$. 
The fibre $X_k^{\GIT}$ of $\calx_k^{\GIT} \to X$ is a non-reductive GIT quotient   $\tPP/\!/\hU$ where  $\hU = \Diff_k$, and is a projective variety of dimension $\dim(X_k^{\GIT})=k(n-1)$ with at worst orbifold singularities.
Moreover
 there is an identification
$$ \tilde{J}_k^{s,\hU}X \cong \diff_X \times_{\diff_k(n)} \tPP^{s,\hU}$$
where $ \tilde{J}_k^{s,\hU}X$ is a $\diff_k(1)$-invariant open subset of the blow-up $\tilde{J}_kX$ of $J_kX$ along its zero section, 
and a relatively ample line bundle $\calo_{\calx_k^{\GIT}}(1)$ on $\calx_k^{\GIT} \to X$, constructed as the geometric quotient of 
$$\diff_X \times_{\diff_k(n)} L^{\otimes N}|_{\tPP^{s,\hU}}$$
by the action of $\diff_k(1)$, which   
pulls back on each fibre of $\calx_k^{\GIT} \to X$
 to  $L^{\otimes N}$ (or equivalently to $\calo(-NE)$ where $E$ is the exceptional divisor for the blow-up)  on $\tPP^{s,\hU} = \tPP^{0}_{\min} \setminus U Z_{\min}(\tPP)$.   

\begin{remark}
Our proof of Theorem \ref{mainthmtwo} will involve checking that an integral over $\calx_k^{\GIT}$ of a cohomology class depending on $N$ is strictly positive. However it will turn out that this integral is independent of $N$ up to multiplication by factors of $N$, so its positivity does not depend on $N$.
\end{remark}

First we need to relate the direct image sheaf $\pi_*\calo_{\calx_{k}^{\GIT}}(m)$ to the Demailly--Semple  bundles $E_{k,j}$ of invariant jet differentials of order $k$ and weighted degree $j$.

\begin{proposition}\label{directimage2} 
Let $\pi: \calx_{k}^{\GIT} \to X$ denote the projection and $N$ be as defined as in Remarks \ref{lineN} and \ref{lineNX}. If $k>1$ then the direct image sheaf 
\begin{equation*}
\pi_*\calo_{\calx_{k}^{\GIT}}(m) \subseteq  \calo(E_{k,\le N^2km})
\end{equation*}
is a subsheaf of the sheaf of holomorphic sections of $E_{k,\le N^2km}=\oplus_{j=0}^{N^2km}E_{k,j}$.
\end{proposition}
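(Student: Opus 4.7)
The plan is to trace through how a section of $\calo_{\calx_{k,\GL}^{\GIT}}(m)$ arises from a concrete polynomial on the blown-up projective bundle and then read off the weighted-degree bound and the $U_k$-invariance that force it into $E_{k,\le 2Nkm}$. First I would unwrap the definition: since $\calo_{\calx_{k,\GL}^{\GIT}}(1)$ is induced by the well-adapted twisted linearisation $L^{\otimes N}$ of \eqref{defN}, any holomorphic section of $\calo_{\calx_{k,\GL}^{\GIT}}(m)$ pulls back under the quotient map to a holomorphic section of $L^{\otimes Nm}$ on the $\diff_k(1)$-semistable locus of $\tPP_X = \tPP(\calo_X \oplus T_X^{\oplus k})$ transforming by the character $\chi^{Nm}$ under $\hU$; equivalently it is a $U$-invariant section of $L^{\otimes Nm}$ of a prescribed $\CC^*$-weight.

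Next I would compute such sections explicitly using the blow-up. The embedding $\tPP \hookrightarrow \PP^{kn} \times \PP^{kn-1}$ presents $\pi_2$ as a resolution of the linear projection $\PP \dashrightarrow \PP^{kn-1}$ from $Z_{\min}$, so $\eta = \zeta - E$ where $E$ is the exceptional divisor. Hence $L = \zeta + \eta = 2\zeta - E$, and fibrewise over $x \in X$ one has
\[ H^0(\tPP_x, L^{\otimes Nm}) = \{\, s \in H^0(\PP_x, \calo_{\PP_x}(2Nm)) : \mathrm{ord}_{Z_{\min}}(s) \ge Nm \,\}. \]
Dehomogenising with the canonical section $\calo_X \hookrightarrow \calo_X \oplus T_X^{\oplus k}$ represents such a section at $x$ by a polynomial $P(v_1, \ldots, v_k)$ on $T_{X,x}^{\oplus k}$ whose monomials have total ordinary degree between $Nm$ and $2Nm$.

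Third, I would derive the weighted-degree bound. Since $v_i$ has $\CC^*$-weight $i$, any monomial $v_1^{i_1} \cdots v_k^{i_k}$ of total ordinary degree $d \le 2Nm$ has weighted degree $i_1 + 2 i_2 + \cdots + k i_k \le k d \le 2Nkm$. The $U$-invariance of $P$ inherited from $\hU$-invariance coincides with the $U_k$-invariance cutting $E_{k,w}$ out of $E_{k,w}^{GG}$, so fibrewise $P$ lies in $E_{k, \le 2Nkm}$. Compatibility of this identification with the local $\GL(n)$-trivialisations of $T_X^{\oplus k}$ promotes the fibrewise inclusion to the asserted inclusion of sheaves on $X$.

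The main obstacle, to my mind, is the second step: correctly identifying $L = \eta + \zeta$ as the class $2\zeta - E$ via the blow-up relation $\eta = \zeta - E$, since this is what produces the sharp vanishing of order $Nm$ along $\PP(\calo_X) \subset \PP_X$ and the ordinary-degree bound $d \le 2Nm$, and hence the weighted-degree multiplier $2Nk$ in the statement. Once this computation is in place, the weighted-degree inequality and the passage from $U$-invariance to membership in $E_{k,w}$ are immediate from the definitions.
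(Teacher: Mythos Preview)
Your argument is correct and lands on the same weighted-degree bound $2Nkm$ as the paper, but via a slightly different computation. The paper does not compute the divisor class $L = 2\zeta - E$ directly; instead it invokes the Segre embedding
\[
\tPP_X \subseteq \PP\bigl((\calo_X \oplus (T_X)^{\oplus k}) \otimes (T_X)^{\oplus k}\bigr),
\]
under which $L$ is the restriction of $\calo(1)$, so that fibrewise $\tilde\pi_* L^{\otimes N} \subseteq \bigl(\oplus_{i=0}^{2N}(T_X^*)^{\otimes i}\bigr)$. This gives the ordinary-degree bound $\le 2N$ for a single power of $L$, and hence $\le 2Nm$ for $L^{\otimes Nm}$, in one line without ever writing down the blow-up relation. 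Your route via $\eta = \zeta - E$ and the identification of $H^0(\tPP_x, L^{\otimes Nm})$ with degree-$2Nm$ polynomials vanishing to order $Nm$ at $Z_{\min}$ is more explicit and makes the lower degree bound $Nm$ visible as well, though that extra information is not used here. Both approaches then finish identically: ordinary degree $\le 2Nm$ forces weighted degree $\le 2Nkm$, and $U$-invariance places the section in $E_{k,\le 2Nkm}$.

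One small point worth tightening: you say the section pulls back to the semistable locus, then silently use $H^0$ over the whole fibre $\tPP_x$. This is justified because the GIT quotient is $\mathrm{Proj}$ of the full invariant ring $\oplus_m H^0(\tPP_X, L^{\otimes m})^{\hU}$ (Theorem~\ref{mainthm}), so sections of $\calo_{\calx_{k,\GL}^{\GIT}}(m)$ already arise from global invariant sections of $L^{\otimes Nm}$ on $\tPP_X$, not merely on the semistable locus; it would be cleaner to say so.
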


\proof
The result to be proved is local on $X$ so it suffices to work in local coordinates in a neighbourhood of $x \in X$. Then the derivatives $f'(0),\ldots, f^{(k)}(0)$ of holomorphic maps $f: (\CC,0) \to (X,x)$ with respect to these local coordinates on $X$ provide coordinates on the fibres of the affine bundle $J_kX \to X$.

Recall from $\S$2.3 that $E_{k,j}$ is a subbundle of the  bundle 
$$ 
\Diff_X \times_{\diff_k(n)} \cale_{k,j} 
$$ 
over $X$, where $ \cale_{k,j}$ is the space of weighted degree $j$ complex-valued polynomials $Q(f'(0),\ldots, f^{(k)}(0))$ on the space $J_k(1,n)$ of $k$-jets of holomorphic maps $f: (\CC,0) \to (\CC^n,0)$. Here $ \cale_{k,j}$ has commuting left and right actions of $\diff_k(n)$ and $\diff_k(1)$, and 
$$ 
E_{k,j}=
\Diff_X \times_{\diff_k(n)} \cale_{k,j}^{U_k} 
$$
where $U = U_k$ is the unipotent radical of $\diff_k(1)$ and $\hU = \diff_k(1)$.

Recall also from Definition \ref{defn:XGIT} that
$$  \calx_k^{\GIT} =   \diff_X \times_{\diff_k(n)} X_k^{\GIT} $$
where $X_k^{\GIT}  = \tPP^{s,\hU}/\hU $ is a non-reductive GIT quotient 
 which is a projective completion of $J_k^{\text{reg}}(1,n)/\diff_k(1)$. Here $\tPP^{s,\hU}$ 
 is an open subvariety of the blow-up of the affine space $J_k(1,n)\cong \Hom(\CC^k, \CC^n)$ at the origin. The action of $\diff_k(n)$ on $J_k(1,n)$ is not linear but it fixes the origin and hence acts on this blow-up. 
 It preserves $\tPP^{s,\hU}$ and commutes with the action of $\diff_k(1)$, so there is an induced action of $\diff_k(n)$ on the geometric quotient $X_k^{\GIT}= 
 \tPP^{s,\hU}/\hU$.

By Remark \ref{lineNX}, a section $\s$ of $ \bigoplus_{j=1}^m
 \calo_{\calx_k^{\GIT}}(j)$ pulls back over  each fibre of $\calx_k^{\GIT} \to X$
 to the restriction of a $\diff_k(1)$-invariant section (with respect to the well-adapted linearisation) $\tilde{\s}$ of  $\bigoplus_{j=1}^m \calo(-NjE)$
  over  the blow-up of $J_k X$ along its zero section, where $E$ is the exceptional divisor for this blow-up and $\tilde{\s}$ is polynomial of degree at most $Nm$ in the coordinates $f'(0),\ldots, f^{(k)}(0)$ on the corresponding fibre of $J_kX \to X$, 
when we use the local coordinates on $X$ to identify the $j$th column of $J_k(1,n)\cong \Hom(\CC^k, \CC^n)$ with the $j$th derivative $f^{(j)}(0)$ of a holomorphic jet $f$ at $x$.  Then $\tilde{\s}$ has the form
\[\sum_{i=0}^{Nm} Q_i(f',f'',\ldots, f^{(k)})\]
where 
 $Q_i$ is a homogeneous polynomial of degree $i$ in its entries $f'(0),\ldots, f^{(k)}(0)$ and is invariant under the well-adapted action of $\diff_k(1)$. 
 For this well-adapted action the weighted degree of $f^{(j)}(0)$ is $jN-\chi$ for some $\chi>0$, determined by the well-adapted shift of $L^{\otimes N}$. Well-adaptedness means that  
\[\omega_{\min}=0+N-\chi < 0 \ll \omega_{\min+1}=0+2N-\chi\]
and hence $N<\chi \ll 2N$ is slightly bigger than $N$. A well-adapted weighted-homogeneous degree $d$ monomial  $(f')^{i_1} \ldots (f^{(k)})^{i_k}$ of degree $i_1 + \ldots + i_k$ satisfies  
\[i_1(N-\chi)+\ldots +i_k(Nk-\chi)=d\]
and hence $N(i_1+2i_2+\ldots +ki_k)=d+\chi(i_1+\ldots +i_k)$;  if this monomial appears with nonzero coefficient in $Q_i$ then  $i_1 + \ldots + i_k = i \le Nm$  and by invariance $d=0$.
Hence $Q_i$ is weighted homogeneous for the non-shifted linearisation $L^{\otimes N}$ of weighted degree $N(i_1+2i_2+\ldots +ki_k)= 
\chi  (i_1+\ldots +i_k) < 2N^2 m \le N^2 km$ as required.  

\qed

\medskip

The following classical theorem connects global invariant jet differentials to the GGL conjecture.
\begin{theorem}[Fundamental vanishing theorem, Green--Griffiths \cite{gg}, Demailly \cite{dem}, Siu \cite{siu1}]\label{demailly}
Let $N$ be the integer defined as in Remarks \ref{lineN} and \ref{lineNX}. Assume that there exist integers $k,m>0$ and ample line bundle
$A\to X$ such that there are nonzero global sections
\[0 \neq H^0(\calx_{k}^{\GIT},\calo_{\calx_{k}^{\GIT}}(m) \otimes \pi^*A^{-1})\hookrightarrow 
H^0(X,E_{k,\le N^2km} \otimes A^{-1}).\] 
Let $\s_1,\ldots ,\s_\ell$ be arbitrary nonzero sections and let $Z\subseteq J_kX$ be the base locus of
these sections. Then every entire holomorphic curve $f:\CC \to X$
necessarily satisfies $f_{[k]}(\CC)\subseteq Z$. In other words, for
every global $\Diff_k(1)$-(semi)invariant differential equation $P$ vanishing on
an ample divisor, every entire holomorphic curve $f$ must satisfy
the algebraic differential equation $P(f'(t),\ldots, f^{(k)}(t))\equiv 0$.
\end{theorem}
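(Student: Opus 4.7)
The plan is to reduce the statement to the classical Ahlfors--Schwarz/Bloch argument applied to each nonzero section $\sigma_i$, after translating sections on $\calx_{k,\GL}^{\GIT}$ into genuine $U_k$-invariant jet differentials with values in $A^{-1}$ on $X$. By Proposition \ref{directimage2}, a nonzero section $\sigma \in H^0(\calx_{k,\GL}^{\GIT},\calo_{\calx_{k,\GL}^{\GIT}}(m)\otimes \pi^*A^{-1})$ pulls back along $q:\tPP_X^{ss,\diff_k(1)} \to \calx_{k,\GL}^{\GIT}$ to a $\diff_k(1)$-invariant section of $L^{\otimes Nm}\otimes \tilde{\pi}^*A^{-1}$, which by the Segre embedding description of $L$ corresponds canonically to a $U_k$-invariant jet differential
\[P_\sigma \in H^0(X, E_{k,\le 2Nkm}\otimes A^{-1}).\]
Thus it suffices to prove the following classical statement: if $P \in H^0(X,E_{k,m}\otimes A^{-1})$ is a nonzero invariant jet differential with values in the dual of an ample line bundle and $f:\CC \to X$ is any entire holomorphic curve, then $P(f',\ldots,f^{(k)}) \equiv 0$.

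The next step is to interpret the pullback $P\circ f_{[k]}$. The $\diff_k(1)$-semi-invariance property
\[P((f\circ\varphi)',\ldots,(f\circ\varphi)^{(k)}) = \varphi'(0)^m\, P(f',\ldots,f^{(k)})\]
is precisely what is needed to ensure that, as $z$ varies over $\CC$, the expression $P(f'(z),\ldots,f^{(k)}(z))\,(dz)^{\otimes m}$ is a well-defined holomorphic section of $f^*A^{-1}\otimes K_\CC^{\otimes m}$. Since $K_\CC$ is trivial this is equivalently a holomorphic section $s_f := P\circ f_{[k]}$ of $f^*A^{-1}$ over $\CC$. The goal is then to show that $s_f \equiv 0$.

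The main step --- and the only nontrivial analytic input --- is a curvature/subharmonicity argument. Fix a smooth Hermitian metric $h$ on $A$ with strictly positive curvature form $\omega_A := i\Theta_h(A)>0$ (possible by ampleness of $A$). Using $h^{-1}$ on $A^{-1}$ and the Poincar\'e metric $\omega_P = \frac{i\, dz\wedge d\bar{z}}{(1-|z|^2/R^2)^2}$ on the disc $\Delta_R\subset \CC$, consider the function
\[u_R(z) := \log \lVert s_f(z)\rVert_{h^{-1}}^{2/m} - \log \lVert dz\rVert^2_{\omega_P}.\]
Standard computation (as in Demailly's \cite{dem} and the original argument of Green--Griffiths \cite{gg}) shows that $\Delta u_R$ dominates, up to a positive constant, the pullback $f^*\omega_A$ minus a multiple of $\omega_P$. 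The Ahlfors--Schwarz lemma then forces an upper bound on $|s_f|$ on $\Delta_R$ which tends to $0$ as $R\to \infty$, so $s_f$ must vanish identically. Hence $P(f',\ldots,f^{(k)})\equiv 0$ on $\CC$, i.e.\ $f_{[k]}(\CC) \subseteq \{P=0\}\subseteq J_kX$.

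Finally, I would apply this to each of the sections $\sigma_1,\ldots,\sigma_\ell$ to conclude that $f_{[k]}(\CC)$ lies in the common zero locus $\bigcap_i \{P_{\sigma_i}=0\} = Z$, which is exactly the base locus of the system. The hardest step is the analytic one (the Ahlfors--Schwarz/curvature argument): one must choose the auxiliary metric carefully to exploit the twist by $A^{-1}$, and ensure that the resulting subharmonic function on $\CC$ satisfies a mean-value/growth estimate incompatible with nontriviality. Everything else --- the translation from GIT invariants to jet differentials via Proposition \ref{directimage2}, and the passage from one section to the base locus of the family --- is essentially bookkeeping.
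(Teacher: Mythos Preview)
The paper does not provide its own proof of this theorem: it is stated as a classical result attributed to Green--Griffiths, Demailly and Siu, and is used as a black box in the argument. Your proposal is therefore not being compared to anything in the paper; rather, you have reconstructed (in outline) the standard proof found in the cited references, and your outline is essentially correct.

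Two small comments on the write-up. First, the section lands in $H^0(X,E_{k,\le 2Nkm}\otimes A^{-1})$, which is a direct sum over weights $i\le 2Nkm$; strictly speaking you should decompose $P_\sigma$ into its weighted-homogeneous pieces and run the vanishing argument for each weight separately (this is harmless but should be said). Second, the sentence ``Since $K_\CC$ is trivial this is equivalently a holomorphic section of $f^*A^{-1}$'' is slightly misleading: the $K_\CC^{\otimes m}$ factor is exactly what lets you bring in the Poincar\'e metric on discs $\Delta_R$ and apply Ahlfors--Schwarz, so it should not be discarded before that step. You do recover it in the next paragraph, so the logic is fine, but the exposition would be cleaner if you kept $f^*A^{-1}\otimes K_{\Delta_R}^{\otimes m}$ throughout and only trivialised $K_\CC$ at the very end when letting $R\to\infty$.
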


By Diverio \cite[Theorem 1]{div2}, for arbitrary ample $A$, $H^0(X,E_{k,m} \otimes A^{-1})= 0$
holds for all $m\ge 1$ if $k<n$, so we can restrict our attention to the range $k\ge n$. 
It turns out that $k=n$ is a good choice; in particular Theorem \ref{germtoentire} below, which is a crucial ingredient for our proof of 
 Theorem \ref{mainthmtwo}, holds when $k=n$.

To control the order of vanishing of these differential forms along
the ample divisor we will choose $A$ to be (as in \cite{dmr}) a
proper twist of the canonical bundle of $X$. Recall that the
canonical bundle of the smooth, degree $d$ hypersurface $X$ is
\[K_X=\calo_X(d-n-2),\]
which is ample as soon as $d\ge n+3$.
The following theorem summarises the results of \S 3 in Diverio--Merker--Rousseau \cite{dmr}, using the improved linear pole order for slanted vector fields due to Darondeau \cite{darondeau2}.

\begin{theorem}[Algebraic degeneracy of entire curves \cite{dmr} and \cite{darondeau2}]\label{germtoentire}
Assume that $k=n$, that $N \geq 1$ is chosen as in Remarks \ref{lineN} and \ref{lineNX}, and that  there exist  $\delta=\delta(n) >0$ and $M=M(n,\delta)$ such that
\[0 \neq H^0(\calx_{n}^{\GIT},\calo_{\calx_{n}^{\GIT}}(m) \otimes \pi^*K_X^{-2\delta Nnm})\hookrightarrow
H^0(X,E_{n,\le N^2nm} \otimes K_X^{-2\delta Nnm}) \]
whenever $\deg(X)>M(n,\delta)$ and $m \gg 0$. 
Then the Green-Griffiths-Lang conjecture holds whenever 
\[\deg(X) \ge \max(M(n,\delta), \frac{5n+3}{\delta}+n+2).\]
\end{theorem}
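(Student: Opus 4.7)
The plan is to combine the Fundamental Vanishing Theorem \ref{demailly} with the slanted vector field technique of Diverio--Merker--Rousseau \cite{dmr}, using the linear pole-order refinement of Darondeau \cite{darondeau2}. First I would invoke Proposition \ref{directimage2} to reinterpret the hypothesised nonzero section in $H^0(\calx_{n,\GL}^{\GIT},\calo_{\calx_{n,\GL}^{\GIT}}(m) \otimes \pi^*K_X^{-2\delta Nnm})$ as an element $\sigma \in H^0(X,E_{n,\le 2Nnm} \otimes K_X^{-2\delta Nnm})$, i.e. a $\Diff_n(1)$-invariant jet differential on $X$ vanishing to high order along an effective divisor in $|K_X^{\otimes 2\delta Nnm}|$, which is ample since $K_X = \calo_X(d-n-2)$ is ample for $d \ge n+3$. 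Theorem \ref{demailly} then forces every nonconstant entire holomorphic curve $f:\CC \to X$ to satisfy $\sigma(f',f'',\ldots,f^{(n)}) \equiv 0$, so the $n$-jet lift satisfies $f_{[n]}(\CC) \subseteq Z(\sigma) \subsetneqq J_nX$. This is algebraic degeneracy at the level of the jet, but not yet of $f$ itself.

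To descend from algebraic degeneracy of $f_{[n]}$ to algebraic degeneracy of $f$, I would work in the universal family $\calx \to S$ of smooth degree-$d$ hypersurfaces in $\PP^{n+1}$ parametrised by an open $S \subset |\calo_{\PP^{n+1}}(d)|$, and in its relative jet space $J_n\calx \to S$. Following the strategy of Siu and Merker--Rousseau, construct global meromorphic slanted vector fields on $J_n\calx$ that generically span the fibrewise tangent bundle; Darondeau's \cite{darondeau2} refinement produces such fields with pole order at most $5n+3$ along the universal hypersurface divisor (replacing the higher-power bounds in \cite{dmr}). Differentiate $\sigma$ iteratively along these vector fields: each Lie derivative $L_V\sigma$ is again a $\Diff_n(1)$-invariant jet differential, but its vanishing order along $K_X$ is depleted by at most $5n+3$ per step. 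Since the initial vanishing is proportional to $\delta\deg(K_X) = \delta(d-n-2)$ in hyperplane units, one can afford enough iterated Lie derivatives to drive the common zero locus $\bigcap_V Z(L_V\sigma) \subseteq J_n X$ down to a subvariety whose projection to $X$ is a proper subvariety $Y \subsetneqq X$, provided
\[\delta(d-n-2) > 5n+3, \qquad \text{i.e.}\qquad d > \tfrac{5n+3}{\delta}+n+2.\]
Combined with the hypothesis $d > M(n,\delta)$ that supplies $\sigma$ in the first place, this yields the stated inequality $d \ge \max(M(n,\delta),\tfrac{5n+3}{\delta}+n+2)$.

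The main obstacle I expect is justifying that after sufficiently many iterated Lie derivatives along generic slanted vector fields, the common base locus actually projects to a proper subvariety of $X$, rather than dominating $X$ in some vertical direction of the fibration $J_nX \to X$. This reduces to the density statement that the global slanted vector fields constructed in \cite{darondeau2} generate the vertical tangent bundle of $J_n\calx \to \calx$ at a generic point of the jet space — the key geometric input from the DMR--Darondeau framework, which we use as a black box. Once this nondegeneracy is in hand, the numerical balance between Darondeau's linear pole order $5n+3$ and the available vanishing $\delta(d-n-2)$ produces the degree bound directly, with no further analytic input required.
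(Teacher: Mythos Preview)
Your proposal is correct and matches the argument in the cited references. Note, however, that the paper itself does not supply a proof of this theorem: it is stated as a summary of results from \S3 of Diverio--Merker--Rousseau \cite{dmr}, combined with Darondeau's improved linear pole order $5n+3$ from \cite{darondeau2}, and used as a black box. Your outline---invariant jet differential from Proposition~\ref{directimage2}, fundamental vanishing from Theorem~\ref{demailly}, then iterated Lie derivatives along globally generated slanted vector fields with Darondeau's pole order to push the base locus down to a proper subvariety of $X$---accurately reconstructs the content of those references, and the numerical balance $\delta(d-n-2) > 5n+3$ is exactly the source of the displayed degree bound.
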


We will prove the following theorem.

\begin{theorem}\label{maintwo}
Let $X\subseteq \PP^{n+1}$ be a smooth complex hypersurface of degree $d$ with ample canonical bundle; that is $d \ge n+3$. Let $N \ge 1$ be as in Remarks \ref{lineN} and \ref{lineNX}. Then 
\[H^0(\calx_{n}^{\GIT},\calo_{\calx_{n}^{\GIT}}(m) \otimes \pi^*K_X^{-2\delta Nnm})\neq 0 \] 
provided that (i) $\delta=\frac{1}{16n^3}$, (ii) $\deg(X)>16n^3(5n+4)$ and (iii) $2\delta Nnm$ is an integer and is sufficiently large.
\end{theorem}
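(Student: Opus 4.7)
The strategy follows the Demailly--Siu--Diverio--Merker--Rousseau paradigm. Since $\deg X\ge n+3$ implies $K_X$ is ample, and $\calo_{\calx_{n,\GL}^{\GIT}}(1)$ is ample by construction, the line bundle
\[\mathcal{L}_\delta:=\calo_{\calx_{n,\GL}^{\GIT}}(1)\otimes \pi^*K_X^{-2\delta Nn}\]
decomposes as $A-B$ where $A=\calo_{\calx_{n,\GL}^{\GIT}}(1)$ and $B=\pi^*K_X^{2\delta Nn}$ are both nef (on a smooth model of $\calx_{n,\GL}^{\GIT}$ to which we pass if needed; the top intersection numbers are unchanged). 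Demailly's strong algebraic Morse inequalities then give
\[h^0\bigl(\calx_{n,\GL}^{\GIT},\mathcal{L}_\delta^{\otimes m}\bigr)\;\ge\;\frac{m^D}{D!}\bigl(A^D-D\cdot A^{D-1}\cdot B\bigr)-o(m^D),\qquad D=\dim\calx_{n,\GL}^{\GIT}=n^2,\]
so that the whole problem reduces to the single strict inequality $A^{n^2}>n^2\cdot A^{n^2-1}\cdot B$ under the hypotheses on $\delta$ and $\deg X$.

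The next step is to compute both intersection numbers via Theorem \ref{intonfibersb} with $k=n$. For $A^{n^2}$ the theorem applies directly, giving an iterated residue in $z,w$ whose integrand is a rational function with coefficients in $n,N,\epsilon$ multiplied by $\prod_{l=0}^{n-1}s\bigl(\tfrac{1}{lz-w}\bigr)$; integration over $X$ then picks out a polynomial of degree $n$ in the Segre classes $s_j(T_X)$. For $A^{n^2-1}\cdot B$ the same formula applies, with one factor of $c_1(\calo(1))$ replaced by $2\delta Nn(d-n-2)\pi^*h$, where $h=c_1(\calo_X(1))$ and $c_1(K_X)=(d-n-2)h$; this replaces one power of $(Nw+N(1-\epsilon)z)$ in the numerator by $2\delta Nn(d-n-2)h$ and decreases the degree in the Segre classes by one. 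For the degree-$d$ hypersurface $X\subseteq\PP^{n+1}$ the Euler sequence yields
\[s(T_X)\;\equiv\;(1+dh)(1+h)^{-(n+2)}\pmod{h^{n+1}},\]
and $\int_X h^n=d$, so both intersection numbers become explicit polynomials in $d$ of degree at most $n+1$, whose coefficients are iterated residues in $z,w$ of rational expressions with rational coefficients depending polynomially on $n$ and $\epsilon$.

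I would then isolate the leading-in-$d$ contribution. Only the top-degree-in-$d$ part of each $s_j(T_X)$ factor contributes to the $d^{n+1}$-coefficient of $A^{n^2}$, and the resulting residue in $z,w$ can be evaluated explicitly; a sign analysis, using $\epsilon\in(0,1)$ as permitted by the remark after Definition \ref{def:welladapted} and by \cite{bkgrosshans}, shows that the $d^{n+1}$-coefficient of $A^{n^2}$ equals a positive rational $C_n$ depending only on $n$. The analogous $d^{n+1}$-coefficient of $n^2 \cdot A^{n^2-1}\cdot B$ has the form $2\delta Nn^3\cdot C_n'$ for a similarly explicit rational $C_n'$, and the factor $\delta=(16n^5)^{-1}$ suppresses it enough that $2\delta Nn^3\cdot C_n'<C_n$; hence the leading-order comparison already wins.

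The main obstacle is the quantitative control of the lower-order-in-$d$ corrections. Each coefficient of the $d$-polynomials is an iterated residue whose numerator and denominator are polynomials in $n,N,\epsilon$ and binomial coefficients from the expansion of $(1+h)^{-(n+2)}$; bounding these residue coefficients from above by a universal polynomial in $n$ (using the explicit combinatorics of Proposition \ref{intonfibers2} together with Lemma \ref{flagresidue}) and combining with the lower bound on $C_n$ reduces the desired strict positivity of $A^{n^2}-n^2\cdot A^{n^2-1}\cdot B$ to an explicit polynomial condition of the form $d\ge \mathrm{const}(n)$. The delicate part is to keep track of all constants cleanly enough to show that $\delta=(16n^5)^{-1}$ together with $d\ge 2(4n)^5$ is exactly sufficient; this is the essential numerical input that converts the qualitative asymptotic positivity into the stated polynomial degree bound.
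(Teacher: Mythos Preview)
Your overall strategy matches the paper's: reduce via Demailly--Trapani Morse inequalities to the positivity of a single top intersection number, evaluate that number as an iterated residue using Theorem~\ref{intonfibersb} with $k=n$, expand the result as a polynomial in $d$ of degree $n+1$ using the hypersurface relation $s(T_X)=(1+dh)(1+h)^{-(n+2)}$ and $\int_X h^n=d$, and then control the coefficients. Two differences from the paper are worth flagging.

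First, your nef decomposition $A=\calo_{\calx_{n,\GL}^{\GIT}}(1)$, $B=\pi^*K_X^{2\delta Nn}$ is simpler than the paper's. The paper twists both sides by $\pi^*\calo_X(4N)$ (Proposition~\ref{propnef}), writing the bundle as $(\calo(1)\otimes\pi^*\calo_X(4N))\otimes(\pi^*\calo_X(4N)\otimes\pi^*K_X^{2\delta Nn})^{-1}$. Since $\calo(1)$ is genuinely ample on the quotient this twist is not strictly forced, and your version would give a slightly cleaner integrand (no $+4h$ in the numerator of $I_{n,\delta}$); the leading-in-$d$ analysis is unaffected and the lower-order bookkeeping only improves.

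Second, and more substantively, your ``sign analysis'' for the leading coefficient and your bounding of the lower residue coefficients are exactly where the work lies, and the paper uses a device you do not mention. After writing down the residue formula the paper performs the linear change of residue variables $z'=z+w$, $w'=w$ (Proposition~\ref{propresiduethree}). In the new variables every factor in the denominator, expanded on the contour $|z|\gg|w|$, has Taylor series with \emph{nonnegative} coefficients (e.g.\ $(lz-(l+1)w)^{-1}$ and $(z-w)/(z-2w)$), so the leading residue $C^{\mathbf 0}$ is manifestly positive and every subleading residue $C^{\mathbf i}$ is bounded by a simple multiple of $C^{\mathbf 0}$ just by termwise comparison. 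Without this change the signs in $\prod_l(lz-w)^{-n}$ and in $(-z)^{n-1}/(w+z)$ are mixed, and neither the positivity of the $d^{n+1}$-coefficient nor the uniform control of the lower ones is transparent. Once the change of variables is in place, the paper sets $\epsilon=\tfrac12$, proves $|p_{n+1-l}|<(4n)^{5l}p_{n+1}$, and finishes with the elementary Fujiwara root bound (Lemma~\ref{estimation}) to get $d>2(4n)^5$; these are the concrete incarnations of what you label ``the delicate part''.
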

Theorem \ref{mainthmtwo} will follow from Theorem \ref{germtoentire} once we have proved Theorem \ref{maintwo}. 

To prove Theorem \ref{maintwo} we will use the
algebraic Morse inequalities of Demailly and Trapani to reduce the existence of global sections to the positivity of certain tautological integrals over $\calx_{n}^{\GIT}$.  Let $L\to P$
be a holomorphic line bundle over a compact K\"ahler manifold $P$ of
dimension $p$ and let $E \to P$ be a holomorphic vector bundle of rank $r$.
\begin{theorem}[Algebraic Morse inequalities, Demailly \cite{dem00}, Trapani \cite{trap}]\label{morse}
Suppose that $L=F\otimes G^{-1}$ is the difference of two nef line
bundles $F,G$ on $P$. Then for any nonnegative integer $q\in \ZZ_{\ge 0}$ 
\[\sum_{j=0}^q(-1)^{q-j}h^j(P,L^{\otimes m}\otimes E)\le
r \frac{m^p}{p!}\sum_{j=0}^q(-1)^{q-j}{p \choose j}F^{p-j}\cdot
G^j+o(m^p)  \mbox{ as } m \to \infty.\] 
In particular, the case when $q=1$ asserts that $L^{\otimes
m}\otimes E$ has a nonzero global section for $m$ large provided that the intersection number $F^p-pF^{p-1}G$ is strictly positive.
\end{theorem}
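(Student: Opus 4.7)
The strategy is to deduce Theorem \ref{morse} from Demailly's differential-geometric holomorphic Morse inequalities via a pointwise linear-algebra estimate that exploits the nefness of $F$ and $G$. Recall Demailly's analytic Morse inequalities: for a smooth Hermitian line bundle $(L,h)$ on a compact K\"ahler manifold $X$ of dimension $n$, with curvature $(1,1)$-form $\alpha = \tfrac{i}{2\pi}\Theta_{L,h}$, and any holomorphic vector bundle $E$ of rank $r$,
\[
\sum_{j=0}^{q}(-1)^{q-j}h^{j}(X, L^{\otimes m}\otimes E) \;\le\; r\,\frac{m^{n}}{n!}\int_{X(\alpha,\le q)} (-1)^{q}\alpha^{n} \;+\; o(m^{n}),
\]
where $X(\alpha,\le q)$ is the locus where $\alpha$ has at most $q$ negative eigenvalues. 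I would take this as the analytic input and work to convert its right-hand side into the algebraic intersection numbers $F^{n-j}\cdot G^{j}$.

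Since $F$ is nef, for every $\epsilon>0$ there is a smooth metric $h_{F,\epsilon}$ whose curvature satisfies $\alpha_\epsilon \ge -\epsilon\omega$ for a fixed K\"ahler form $\omega$; similarly choose $\beta_\epsilon \ge -\epsilon\omega$ on $G$. Equipping $L = F\otimes G^{-1}$ with the induced metric, its curvature form is $\alpha_\epsilon - \beta_\epsilon$. The crux of the proof is then a pointwise linear-algebra inequality: at each $x\in X$, if $\beta_\epsilon(x)\ge 0$ (to leading order in $\epsilon$), then
\[
(-1)^{q}\,\mathbf{1}_{X(\alpha_\epsilon-\beta_\epsilon,\le q)}(x)\,(\alpha_\epsilon-\beta_\epsilon)^{n}(x)\;\le\;\sum_{j=0}^{q}(-1)^{q-j}\binom{n}{j}\alpha_\epsilon^{\,n-j}\wedge\beta_\epsilon^{\,j}(x)\;+\;O(\epsilon).
\]
To establish this, I would diagonalise $\alpha_\epsilon - \beta_\epsilon$ in orthonormal local coordinates at $x$ and expand $(\alpha_\epsilon - \beta_\epsilon)^n$ by the binomial formula. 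On the eigenvalue configurations selected by the indicator (at most $q$ negative eigenvalues for $\alpha_\epsilon-\beta_\epsilon$), the missing tail $j>q$ in the truncated alternating sum can be discarded because the non-negativity of $\beta_\epsilon$ forces the discarded terms to have a favourable sign. Integrating this pointwise bound over $X$ and plugging into Demailly's estimate gives
\[
\sum_{j=0}^{q}(-1)^{q-j}h^{j}(X, L^{\otimes m}\otimes E) \;\le\; r\,\frac{m^{n}}{n!}\sum_{j=0}^{q}(-1)^{q-j}\binom{n}{j}\!\int_{X}\alpha_\epsilon^{\,n-j}\wedge\beta_\epsilon^{\,j} \;+\; O(\epsilon)\,m^{n} \;+\; o(m^{n}).
\]
Each integral $\int_X \alpha_\epsilon^{n-j}\wedge\beta_\epsilon^{j}$ represents $c_1(F)^{n-j}\cdot c_1(G)^{j} = F^{n-j}\cdot G^{j}$ independently of the metric, so letting $\epsilon\to 0$ yields the stated inequality. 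The case $q=1$ specialises to $h^0 - h^1 \le r\tfrac{m^n}{n!}(F^n - nF^{n-1}\cdot G) + o(m^n)$, and since $h^0 \ge h^0 - h^1$, positivity of the intersection number forces $H^0(X, L^{\otimes m}\otimes E) \neq 0$ for $m \gg 0$.

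The main obstacle is the pointwise algebraic step. The forms $\alpha_\epsilon$ and $\beta_\epsilon$ generally do not commute, so they cannot be simultaneously diagonalised; Trapani's insight is that one should diagonalise only their difference and then carefully bound the cross terms in the wedge-product expansion using the near-positivity of $\beta_\epsilon$. Tracking the error terms of size $O(\epsilon)$ uniformly in $x$, and showing that the alternating sum truncated at $j=q$ dominates the indicator-weighted top power pointwise, is the combinatorial-analytic heart of the argument. The remaining ingredients, namely invoking Demailly's theorem, approximating nef line bundles by metrics with arbitrarily small negative curvature, and passing to the limit $\epsilon\to 0$ to recover intersection numbers from curvature integrals, are standard.
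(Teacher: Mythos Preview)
The paper does not prove this theorem at all: it is quoted as a black box from Demailly and Trapani, and the paper's contribution begins only afterwards, in expressing the relevant line bundle as a difference of nef bundles and checking positivity of the intersection number. So there is no ``paper's own proof'' to compare against. Your sketch is essentially the argument given in those references: start from Demailly's analytic Morse inequalities, use nefness to approximate $F$ and $G$ by metrics with curvature $\ge -\epsilon\omega$, establish a pointwise bound on $(-1)^q \mathbf{1}_{X(\le q)}(\alpha-\beta)^n$ by the truncated binomial sum, integrate, and pass to the limit $\epsilon\to 0$. That is the correct route.

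One genuine slip: in your final paragraph the inequality for $q=1$ is written backwards. With the convention in the statement, the left-hand side at $q=1$ is $(-1)^1 h^0 + (-1)^0 h^1 = h^1 - h^0$, so the inequality reads
\[
h^1 - h^0 \;\le\; r\,\frac{m^n}{n!}\bigl(-F^n + nF^{n-1}\cdot G\bigr) + o(m^n),
\]
equivalently $h^0 - h^1 \ge r\tfrac{m^n}{n!}(F^n - nF^{n-1}\cdot G) + o(m^n)$. This is a \emph{lower} bound on $h^0 - h^1$, not an upper bound, and it is exactly what is needed: since $h^0 \ge h^0 - h^1$, positivity of $F^n - nF^{n-1}\cdot G$ forces $h^0 > 0$ for $m\gg 0$. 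As you wrote it, ``$h^0 - h^1 \le \ldots$'' together with ``$h^0 \ge h^0 - h^1$'' yields nothing.
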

 In order to apply this theorem we need an expression for $\calo_{\calx_{n}^{\GIT}}(1)\otimes \pi^*K_X^{-2\d Nn}$ as a difference of nef bundles on $\calx_{n}^{\GIT}$.

\begin{proposition}\label{propnef}
Let $d\ge n+3$ so that $K_X$ ample. Let $N\ge 1$ be fixed as in Remarks \ref{lineN} and \ref{lineNX}. 
The following line bundles are nef on $\calx_{n}^{\GIT}$:
\begin{enumerate}
\item $\calo_{\calx_{n}^{\GIT}}(1) \otimes \pi^*\calo_X(2N)$
\item $\pi^*\calo_X(2N)\otimes \pi^*K_X^{2\d Nn}$ for any $\d>0$ and $2\d Nn$ integer.
\end{enumerate}
\end{proposition}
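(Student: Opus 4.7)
The plan is to prove the two statements separately; (2) is essentially immediate, while (1) is the substantial part.

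\emph{For (2):} Since $d=\deg(X)\ge n+3$, the canonical bundle $K_X=\calo_X(d-n-2)$ is ample. Hence
\[\calo_X(4N)\otimes K_X^{\otimes 2\delta Nn}=\calo_X\bigl(4N+2\delta Nn(d-n-2)\bigr)\]
is a positive tensor power of the ample line bundle $\calo_X(1)$ (recall that $2\delta Nn$ is assumed to be an integer and $\delta>0$), so it is ample on $X$. Since pullback by $\pi:\calx_{n,\GL}^{\GIT}\to X$ preserves nefness, its pullback $\pi^*\calo_X(4N)\otimes\pi^*K_X^{2\delta Nn}$ is nef on $\calx_{n,\GL}^{\GIT}$.

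\emph{For (1):} The approach is to check nefness upstairs on the blow-up $\tPP_X=\tPP(\calo_X\oplus T_X^{\oplus n})$. Recall from \S\ref{sec:snowman} that $\calx_{n,\GL}^{\GIT}$ is the geometric quotient by $\diff_k(1)$ of the semistable locus $\tPP_X^{\mathrm{ss}}\subseteq\tPP_X$, and that with $N$ as in \eqref{defN} the quotient map $q:\tPP_X^{\mathrm{ss}}\to\calx_{n,\GL}^{\GIT}$ satisfies $q^*\calo_{\calx_{n,\GL}^{\GIT}}(1)=L^{\otimes N}|_{\tPP_X^{\mathrm{ss}}}$, where $L$ on $\tPP_X$ has first Chern class $\zeta+\eta$. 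Here $\zeta$ is the pullback of the tautological hyperplane class from $\PP(\calo_X\oplus T_X^{\oplus n})$, and $\eta$ is represented on the exceptional divisor $E$ by the tautological hyperplane of $E=\PP(T_X^{\oplus n})$. Since $q$ is proper and surjective, a line bundle on $\calx_{n,\GL}^{\GIT}$ is nef if and only if its $q$-pullback is nef on $\tPP_X^{\mathrm{ss}}$. Thus it suffices to show that $L\otimes\pi_{\tPP_X}^*\calo_X(4)$ is nef on $\tPP_X^{\mathrm{ss}}$, and then take the $N$-th tensor power.

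The key input is the global generation of $\Omega_X^1(1)$ on any smooth projective hypersurface $X\subseteq\PP^{n+1}$ -- a standard consequence of the dualised Euler sequence restricted to $X$ combined with the conormal sequence. Using the second projection $\tPP_X\to\PP(T_X^{\oplus n})$ which exhibits $\tPP_X$ as a $\PP^1$-bundle, one identifies $\zeta$ and $\eta$ as pullbacks of the tautological hyperplane classes from $\PP(\calo_X\oplus T_X^{\oplus n})$ and $\PP(T_X^{\oplus n})$ respectively. Combined with the removal of the $U$-orbit of $\tZmin$ from the semistable locus, this yields the nefness of $L\otimes\pi_{\tPP_X}^*\calo_X(4)$ on $\tPP_X^{\mathrm{ss}}$ after distributing the $\calo_X(4)$ twist between the two factors so that each tautological contribution becomes nef on its semistable part.

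The main obstacle is the quantitative control of the exceptional divisor contribution: naively one would need $T_X(a)$ to be nef globally on $X$, and for hypersurfaces of high degree $d\gg n$ this typically requires $a$ of order $d$, far larger than $4$. The crucial point is that the $\diff_k(1)$-semistability condition excludes precisely the directions in $E$ along which $T_X$ is most negative (the $U$-orbit of $\tZmin$), so only the small $\calo_X(4)$ twist is needed to guarantee nefness on the semistable locus. The interplay between the linearisation $L$, the $\hU$-semistability criterion and the positivity of $\Omega_X^1(1)$ is the technical heart of the argument.
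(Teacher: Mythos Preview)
Your proof of (2) is fine and matches the paper's one-line argument.

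For (1) there is a genuine gap. First, the claim that $\Omega_X^1(1)$ is globally generated is false: already $\Omega_{\PP^{n+1}}(1)$ has no global sections (from the Euler sequence $H^0(\calo^{n+2})\cong H^0(\calo(1))$), so the surjection $\Omega_{\PP^{n+1}}|_X \twoheadrightarrow \Omega_X$ does not help. What is true, and what the paper uses, is that $T_X^*\otimes\calo_X(2)$ is globally generated. Second, your final paragraph is where the argument actually has to happen, and it does not: the assertion that the $\hU$-semistability condition ``excludes precisely the directions in $E$ along which $T_X$ is most negative'' is not correct. The unstable locus $U\tZmin$ is determined by the torus weights on the fibre $\CC^{nk}$ and has nothing to do with the curvature of $T_X$ over $X$; it is the same subset in every fibre, regardless of how negative $T_X$ is at that point. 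Moreover you write that one would need $T_X(a)$ to be nef, but the relevant bundle is the \emph{cotangent} bundle: sections of $\calo_{\PP(T_X^{\oplus n})}(1)$ live in $(T_X^*)^{\oplus n}$, so it is $T_X^*\otimes\calo_X(a)$ whose nefness matters, and this holds for $a\ge 2$ independently of $d$.

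The paper's argument avoids semistability entirely and proves nefness of $L^{\otimes N}\otimes\tilde\pi^*\calo_X(4N)$ on \emph{all} of $\tPP_X$. It uses the Segre embedding $\tPP_X\subseteq \PP\bigl((\calo_X\oplus T_X^{\oplus k})\otimes T_X^{\oplus k}\bigr)$, under which $L$ is the restriction of $\calo(1)$; since $T_X^*\otimes\calo_X(2)$ is globally generated, so is $\bigl(\oplus_{i=0}^{D}(T_X^*)^{\otimes i}\bigr)\otimes\calo_X(2D)$, and hence $\calo_{\PP(\oplus_{i=0}^{D}(T_X)^{\otimes i})}(1)\otimes\pi^*\calo_X(2D)$ is nef for any $D$. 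Taking $D=2N$ gives the result. In fact your decomposition idea can be salvaged along the same lines without any appeal to semistability: with $\Omega_X^1(2)$ globally generated, both $\zeta+2h$ (on $\PP(\calo_X\oplus T_X^{\oplus n})$) and $\eta+2h$ (on $\PP(T_X^{\oplus n})$) are nef, so their sum $c_1(L)+4h$ is nef on $\tPP_X$ and the $N$th power descends. But as written, your argument neither states the correct twist nor carries out the positivity check, and the invocation of semistability is a red herring.
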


\begin{remark}
Since $K_X=\calo_X(d-n-2)$ we can write $\pi^*\calo_X(2N)\otimes \pi^*K_X^{2\d Nn}$ as $\pi^*\calo_X(2N + 2\d N n(d-n-2))$. However we prefer to keep the terms $\pi^*\calo_X(2N)$ and $ \pi^*K_X^{2\d Nn}$ separate.
\end{remark}

First note that  $T^*_{\PP^{n+1}}\otimes \calo(2)$ is globally generated, and there is a surjective bundle map $(T^*_{\PP^{n+1}}\otimes \calo(2))|_X \rightarrow T^*_X\otimes \calo_X(2)$. 
 Therefore 
\begin{equation}\label{ggen}
T^*_X\otimes \calo_X(2) \text{ is globally generated.}
\end{equation}

We now follow an inductive argument. Eliminating the terms of degree $k+1$ results in a surjective algebra homomorphism
$J_k(1,n) \twoheadrightarrow J_{k-1}(1,n))$, 
and the chain $J_k(1,n) \twoheadrightarrow J_{k-1}(1,n)
\twoheadrightarrow \ldots \twoheadrightarrow J_1(1,n)$ induces
an increasing filtration on $J_k(1,n)^*$:
\begin{equation*} 
J_1(1,n)^* \subset J_2(1,n)^* \subset \ldots \subset J_k(1,n)^*
\end{equation*}
which gives a short exact sequence of bundles
\begin{equation}\label{ses}
0 \to J_{k-1}^*X \to J_k^*X \to T_X^* \to 0
\end{equation}
This induces a short exact sequence of tangent bundles at the zero section
\begin{equation}\label{sestangent}
0 \to T_X  \to T_0(J_kX^*) \to T_0(J_{k-1}X^*) \to 0
\end{equation}
Note that the tangent bundles are vector bundles, and hence this is a short exact sequence of vector bundles over $X$. By induction and \eqref{ggen}
 we get that $T_0(J_kX^*) \otimes \calo(2k)$ is globally generated, and hence 
\begin{equation}\label{nef}
\calo_{\PP(T_0(J_k(X))}(1)\otimes \pi^*\calo_X(2)
\end{equation}
is relatively nef line bundle (note that in this paper we set $\PP(V)$ to be the bundle of 1-dimensional subspaces in $V$, not the $1$-dimensional quotients, hence sections of $\calo_{\PP(V)}(1)$ are points in $V^*$).

By Remark \ref{lineNX} the relatively ample line bundle $\calo_{\calx_k^{\GIT}}(1)$ on $\calx_k^{\GIT} \to X$, constructed as the geometric quotient of 
$$\diff_X \times_{\diff_k(n)} L^{\otimes N}|_{\tPP^{s,\hU}}$$
by the action of $\diff_k(1)$,   
pulls back on each fibre of $\calx_k^{\GIT} \to X$
 to  $L^{\otimes N}$ (or equivalently to $\calo(-NE)$ where $E$ is the exceptional divisor for the blow-up)  on $\tPP^{s,\hU} = \tPP^{0}_{\min} \setminus U Z_{\min}(\tPP)$.   
Hence \eqref{nef} tells us that $L^{\otimes N} \otimes \tilde{\pi}^*\calo(2N)$ is nef, and the induced bundle $\calo_{\calx_{n,\GL}^{\GIT}}(1) \otimes \pi^*\calo_X(2N)$ is also nef. 
The second part follows from the standard fact that the pull-back of an ample line bundle is nef.
\qed 

\medskip

Consequently, we can express $\calo_{\calx_{n}^{\GIT}}(1)\otimes \pi^*K_X^{-2\d nN}$ as the following difference of two nef line bundles:
\begin{equation*}
\calo_{\calx_{n}^{\GIT}}(1)\otimes \pi^*K_X^{-2\d nN }=(\calo_{\calx_{n}^{\GIT}}(1) \otimes
\pi^*\calo_X(2N))\otimes (\pi^*\calo_X(2N)\otimes \pi^*K_X^{2\d nN})^{-1}.
\end{equation*}
We will be able to deduce Theorem \ref{maintwo}  from the algebraic Morse inequalities Theorem \ref{morse} by proving that the top degree form $I(n,\d)$ on $\calx_{n}^{\GIT}$ given by
\begin{equation*}
I(n,\d)=
c_1(\calo_{\calx_{n}^{\GIT}}(1) \otimes \pi^*\calo_X(2N))^{n^2}-
n^2c_1(\calo_{\calx_{n}^{\GIT}}(1) \otimes
\pi^*\calo_X(2N)^{(n^2-1)}c_1(\pi^*\calo_X(2N)\otimes
\pi^*K_X^{2\d nN})
\end{equation*}
 is positive, in the sense that
$$ \int_{\calx_n^{\GIT}} I(n,\d) > 0,
$$  if $\d=\frac{1}{16n^3}$ and $d>N(n,\d)=16n^3(5n+4)$.

\section{Cohomology of $X_k^{\GIT}=\tPP/\!/\diff_k(1)$}

Recall that the fibre $X_k^{\GIT}$ of $\calx_k^{\GIT} \to X$ is the non-reductive GIT quotient $X_k^{\GIT} = \tPP/\!/ \diff_k(1)$ of the blow-up $\tPP$ of a point in the projective space
$\PP(\CC \oplus \Hom(\CC^k,\CC^n))$ by the action of $\diff_k(1) = \hU = U \rtimes \l(\CC^*)$. Our aim is to express $ \int_{\calx_n^{\GIT}} I(n,\d) $ as an integral over $X$ by representing $I(n,\d)$ using an equivariant cohomology class and 
and integrating over the fibres $X_k^{\GIT}$ of $\calx_k^{\GIT} \to X$. Recall also that since $U$ is contractible, $\hU$ is homotopy equivalent to $\CC^*$ and to $S^1$, and so $\hU$-equivariant cohomology  is isomorphic to $\CC^*$- (or equivalently $S^1$-) equivariant cohomology of $\tPP$. 

Recall also from Theorem \ref{jeffreykirwannonred}  and Remark \ref{rem:bkcoh} that we can integrate a cohomology class on the nonreductive GIT quotient $X_k^{\GIT} = \tPP/\!/\hU$ by expressing it as the image $\kappa_{\hU} (\eta)$ under the  surjective ring homomorphism $  \kappa_{\hU}: H_{\hU}^*(\tPP;\QQ)=H_{S^1}^*(\tPP;\QQ) \to H^*(\tPP/\!/\hU;\QQ)$ of some $\hU$-equivariant cohomology class $\eta$ on $\tPP$ represented by an equivariant differential form $\eta(z)$, where $z$ is the standard coordinate on the Lie algebra of $\l(\CC^*)$; we have 
\begin{equation}  \label{eqnresiduepre}  \int_{\tPP/\!/\hU} \kappa_{\hU} (\eta) 
= n_{\hU} \res_{z=0} \int_{F_{\min}}\frac{i_{F_{\min}}^* (\eta(z))} 
{\mathrm{Euler}(\mathcal{N}_{UF_{\min}})(z)} dz \end{equation}
where $F_{\min} = Z_{\min}(\tPP)$ is the union of those connected components of the fixed point locus $\tPP^{\CC^*}$ on which the $S^1$-moment map takes its minimum value $\omega_{\min}$, and $n_{\hU}$ is a strictly positive rational number  which  depends only on $\hU$ and the size of the stabiliser in $\hU$ of a generic $x \in X$. 
 Here $\mathcal{N}_{UF_{\min}}$ is the normal bundle to $UF_{\min}$ in $\tPP$, and in order to calculate the residue we express the restriction to $F_{\min} $ of its equivariant Euler class as a polynomial in $z$ with coefficients in the cohomology of $F_{\min}$, all but one of degree at least one and thus nilpotent, so that the multiplicative inverse can be expressed as a Laurent series in $z$ with coefficients in the cohomology of $F_{\min}$. This means that it is in fact more natural to write this formula as 
 \begin{equation}  \label{eqnresidue}  \int_{\tPP/\!/\hU} \kappa_{\hU} (\eta) 
= n_{\hU}\int_{F_{\min}}      \res_{z=0}    \frac{i_{F_{\min}}^* (\eta(z))} 
{\mathrm{Euler}(\mathcal{N}_{UF_{\min}})(z)} dz. \end{equation}
 Moreover as noted in Remark \ref{rem:bkcoh}, it suffices to express our cohomology class on $\tPP/\!/\hU$ as 
 $$ \kappa_{\hU}|_{\tilde{J}_k(1,n)}(\eta)$$
 where $\tilde{J}_k(1,n)$ is the blow-up of $J_k(1,n)$ at 0 and 
 $$ \eta \in H^*_{\hU}(\tilde{J}_k(1,n);\QQ) \cong H^*_{S^1}(\tilde{J}_k(1,n);\QQ).$$
 Thus to understand the cohomology of $X_k^{\GIT}$ we want to consider first the equivariant cohomology of 
$Z_{\min}(\tPP) \subseteq  \tilde{J}_k(1,n) \subseteq \tPP$ with respect to the action of $\diff_k(1) = \hU = U \rtimes \l(\CC^*)$. 

Since $J_k(1,n)$ is an affine space, $\tilde{J}_k(1,n)$ retracts equivariantly onto its exceptional divisor $E \cong \PP^{kn-1}$, which is a GIT quotient of $T_0 J_k(1,n)$ by the $\CC^*$-action of scalar multiplication which commutes with the action of $\hU$. Let $T_2$ be the two-dimensional complex torus which is the product of this $\CC^*$ acting via scalar multiplication and the one-parameter subgroup $\l:\CC^* \to \hU$ of $\hU$; its Lie algebra has coordinates $(w,z)$ where $w$  is the standard coordinate on the Lie algebra of the copy of $\CC^*$ acting via scalar multiplication and $z$  is the standard coordinate on the Lie algebra of $\l(\CC^*)$.
The $T_2$-weights on the tangent space $T_0 J_k(1,n) = T_{[1:0:\ldots :0]} \PP(\CC \oplus \Hom(\CC^k, \CC^n))$ are $w+z,w+2z,\ldots, w+kz$ (all of these occurring with multiplicity $n$). 

Embedding $\tPP$ in $\PP^{kn} \times \PP^{kn-1}$,  let $\zeta$ denote the equivariant first Chern class of the hyperplane line bundle on $P^{kn}$ (which restricts to the trivial bundle on $J_k(1,n)$), and let $\eta$ denote the equivariant first Chern class of the hyperplane line bundle on $\PP^{kn-1} = \PP(T_0 J_k(1,n))$,  with both pulled back to  equivariant classes on $\tPP$. To apply Theorem \ref{mainthm} we take $a>>1$ and twist the restriction to $\tPP$ of the ample linearisation $L=\calo_{\PP^{kn}}(a) \otimes \calo_{\PP^{kn-1}}(1)$ on $\PP^{kn} \times \PP^{kn-1}$ by a rational character of $\hU$ which can be identified with $bz$ for some $b \in \QQ$ and makes the linearisation well-adapted for the action of $\hU$. Then $\tZmin = Z_{\min}(\tPP)$ is the $n-1$-dimensional projective linear subspace of $E = \PP(T_0J_k(1,n) \cong \PP^{kn-1}$ corresponding to the $T_2$-weight space in $T_0 J_k(1,n)$ with weight $w+z$, and to make the linearisation well-adapted for the $\hU$-action we twist by a rational character of $\hU$ (and thus of $T_2$) so that this weight $w+z$ becomes $w-\epsilon z$ for some  small positive $\epsilon$. So the restriction of the well-adapted $\hU$-equivariant linearisation on $\tPP$ to $\tZmin = Z_{\min}(\tPP)$ can be identified (up to replacement with a positive tensor power) with the ample $\hU$-equivariant line bundle (defined up to replacement with a positive tensor power) induced on $Z_{\min}(\tPP)$ by regarding $Z_{\min}(\tPP)$ as the GIT quotient by $\CC^*$ of the $n$-dimensional weight space in $T_0 J_k(1,n)$ with $\CC^* \times \hU$-weight $w -\epsilon z$ where $0 < \epsilon << 1$. In fact 
in \S5.2 of \cite{bkgrosshans} it is  shown that, because the action of $\hU$ on $\tPP$ extends to an action of $\GL(k)$, we can choose any rational $0<\epsilon<1$; we will make a choice  of $\epsilon$ in this range later.

Finally recall that the non-reductive GIT quotient $X_k^{\GIT}=\tPP/\hU = \tPP/\!/\Diff_k$ given by Theorem \ref{mainthm}  is a smooth projective variety of dimension $\dim(X_k^{\GIT})=k(n-1)$, and 
a sufficiently divisible power  
\begin{equation}\label{defN2}
L^{\otimes N}
\end{equation}
of $L$ induces an ample line bundle $\calo_{X_k^{\GIT}}(1)$ on this quotient; this line bundle pulls back to the restriction of $L^{\otimes N}$ to the (semi)stable locus in $\tPP$ (see Remark \ref{explainN}). The first Chern class of $\calo_{X_k^{\GIT}}(1)$ has an equivariant lift to $\tPP$ which is the first Chern class of the well-adapted twist of $L^{\otimes N}$, and by choosing $N$ to be sufficiently divisible we can assume that  its restriction to $\tZmin = Z_{\min}(\tPP)$ is  the ample $\hU$-equivariant line bundle  induced on $Z_{\min}(\tPP)$ by regarding $Z_{\min}(\tPP)$ as the GIT quotient by $\CC^*$ of the $n$-dimensional weight space $T_0 J_k(1,n)_{w+z}$ in $T_0 J_k(1,n)$ with $\CC^* \times \hU$-weight $N(w -\epsilon z)$.

Our aim is to show that the integral 
$ \int_{\calx_n^{\GIT}} I(n,\d) $
is strictly positive by integrating over the fibres of $\calx_n^{\GIT} \to X$ to obtain an integral over $X$. 
We will find that in order to integrate over the fibres
we will need to calculate 
 \[\int_{X_k^{\GIT}}  c_1(\calo_{X_k^{\GIT}}(1))^{k(n-1)} \]
 when $k=n$, as well as some other similar integrals. It follows from (\ref{eqnresidue}) that 
 $$ \int_{X_k^{\GIT}}  c_1(\calo_{X_k^{\GIT}}(1))^{k(n-1)}
 = n_{\hU}  \int_{\tZmin}    \res_{z=0}   \frac{i_{F_{\min}}^* c_1(L^{\otimes N})(z)} 
{\mathrm{Euler}(\mathcal{N}_{U\tZmin})(z)} dz. $$
By applying (\ref{eqnresidue}) in the case when $U$ is trivial and $F_{\min} = \{0\} \subseteq T_0 J_k(1,n)_{w+z} \subseteq \PP(\CC \oplus T_0 J_k(1,n))$, we have 
$$\int_{\tZmin}\frac{i_{F_{\min}}^* c_1(L^{\otimes N})(z)} 
{\mathrm{Euler}(\mathcal{N}_{U\tZmin})(z)}  = n_{\CC^*} \res_{w=0} \frac{\gamma(w,z)} 
{\mathrm{Euler}(\mathcal{N}_{T_0 J_k(1,n)_{w+z}
})(w,z)} dw$$
where $\gamma(w,z)$ is a $T_2$-equivariant cohomology class on $T_0 J_k(1,n)_{w+z}$
which represents the restriction of the equivariant first Chern class of the well-adapted twist of $L^{\otimes N}$ to $Z_{\min}(\tPP) = \PP(T_0 J_k(1,n)_{w+z})$  quotiented by 
the equivariant Euler class $\mathrm{Euler}(\mathcal{N}_{U\tZmin})(z)$, and $\mathcal{N}_{T_0 J_k(1,n)_{w+z}
}$ is the normal bundle to ${T_0 J_k(1,n)_{w+z}
}$ in $T_0J_k(1,n)$. Note that the normal bundle $\mathcal{N}_{U\tZmin}$ to $U \tZmin$ in $\tPP$ is the quotient of the normal bundle $\mathcal{N}_{\tZmin}$ to $\tZmin$ by the trivial bundle $V_\lieu$ with fibre $\lieu$.

Thus we will obtain an expression for $ \int_{X_k^{\GIT}}  c_1(\calo_{X_k^{\GIT}}(1))^{k(n-1)}$ as an iterated residue of a rational function of $w$ and $z$.

\subsection{Integration on $X_k^{\GIT}$}
Let us collect the ingredients needed to calculate $  \int_{X_k^{\GIT}} \,\, c_1(\calo_{X_k^{\GIT}}(1))^{k(n-1)} $ using 
 Theorem \ref{jeffreykirwannonred} and Remark \ref{rem:bkcoh} as above. We have fixed a linearisation for the action of $\hU = \diff_k(1)$ on $\tPP$ with induced ample line bundle $\calo_{X_k^{\GIT}}(1)$
on $X_k^{\GIT}={\tPP/\!/\diff_k(1)}$ (where $\tPP$ is  the projective space $\PP(\CC \oplus \Hom(\CC^k, \CC^n))$ blown up at $[1:0 \cdots 0]$), and 
a two-dimensional complex torus $T_2 = \CC^* \times \l(\CC^*)$ with the first copy of $\CC^*$ acting on the tangent space $T_0 J_k(1,n)$ as scalar multiplication, and $\l(\CC^*)$ acting on $T_0 J_k(1,n)$ as the restriction of the induced action of $\hU$.
We have seen 
that 
 $$ \int_{X_k^{\GIT}}  c_1(\calo_{X_k^{\GIT}}(1))^{k(n-1)}
 = n_{\hU} \int_{\tZmin}    \res_{z=0}    \frac{i_{\tZmin}^* c_1(L^{\otimes N})(z) \cup \mathrm{Euler}(V^*_\lieu)(z)}
{\mathrm{Euler}(\mathcal{N}_{\tZmin})(z)} dz $$
and
$$\int_{\tZmin}\frac{i_{\tZmin}^* c_1(L^{\otimes N})(z) \cup \mathrm{Euler}(V^*_\lieu)(z)}
{\mathrm{Euler}(\mathcal{N}_{\tZmin})(z)}  = n_{\CC^*} \res_{w=0} \frac{\gamma(w,z)} 
{\mathrm{Euler}(\mathcal{N}_{T_0 J_k(1,n)_{w+z}
})(w,z)} dw$$
where $\gamma(w,z)$ is a $T_2$-equivariant cohomology class on $T_0 J_k(1,n)_{w+z}$
which represents the restriction of the equivariant first Chern class of the well-adapted twist of $L^{\otimes N}$ to $\tZmin = \PP(T_0 J_k(1,n)_{w+z})$ quotiented by 
$$\mathrm{Euler}(\mathcal{N}_{U\tZmin})(z) = \mathrm{Euler}(\mathcal{N}_{\tZmin})(z)/ \mathrm{Euler}(V^*_\lieu)(z) ,$$
and $\mathcal{N}_{T_0 J_k(1,n)_{w+z}
}$ is the normal bundle to ${T_0 J_k(1,n)_{w+z}
}$ in $T_0J_k(1,n)$.
 Here $w,z$ are the standard coordinates on the Lie algebra of $T_2$. Substituting we obtain
\begin{equation} \label{doubleres}
\int_{X_k^{\GIT}}  c_1(\calo_{X_k^{\GIT}}(1))^{k(n-1)}
 = n_{\hU \times \CC^*} \res_{w=0}  \res_{z=0} \frac{\gamma(w,z) \cup \mathrm{Euler}(V^*_\lieu)(w,z)}
{\mathrm{Euler}(T_0 J_k(1,n)_{w+z}
)(w,z)} dw dz.
\end{equation}
The ingredients appearing in this equation can be summarised as follows. 
\begin{enumerate}
\item $n_{\hU \times \CC^*}$ is a strictly positive rational number which depends on ${\hU \times \CC^*}$ and the size of a generic stabiliser for its action on $T_0 J_k(1,n)$.
\item The restriction of the equivariant first Chern class of the well-adapted twist of $L^{\otimes N}$ to $\tZmin$ is represented by the $T_2$-equivariant Euler class $Nw - N\epsilon z$ on the tangent space $T_0J_k(1,n)$.
\item The weights of $\l: \CC^* \to \hU = \diff_k(1)$ on $\lieu=\mathrm{Lie}(U)$ are $1,2,\ldots, (k-1)$, and the $\l(\CC^*)$-equivariant Euler class $\mathrm{Euler}(V_\lieu^*)$ is represented by the $T_2$-equivariant class $
-z\cdot (-2z) \cdot \ldots \cdot (-(k-1)z)=(k-1)!(-z)^{k-1}$ on $T_0 J_k(1,n)$.
\item The normal bundle  to $\tZmin$ in $E$ has equivariant Euler class represented by 
$(-z)^n(-2z)^n \cdot \ldots \cdot (-(k-1)z)^n$. The weights here correspond to the second, third, ..., and last columns of $E\cong \PP(T_0J_k(1,n))$. The normal bundle of $E$ in $\tPP$ is the tautological bundle $\calo_E(-1)$ with equivariant first Chern class represented by $-w-z$, and hence 
the equivariant Euler class $\mathrm{Euler}(\mathcal{N}_{\tZmin})$ of the normal bundle $\mathcal{N}_{\tZmin}$ to $\tZmin$ in $\tPP$ is represented by 
\[(-1)^{(k-1)n+1}(w+z)((k-1)!)^{n}z^{(k-1)n}\] 
\item The equivariant Euler class $\mathrm{Euler}(T_0 J_k(1,n)_{w+z})$ is represented by $(w+z)^n$.
\end{enumerate} 
Substituting this into the formula (\ref{doubleres}), and changing $z$ to $-z$, which changes the sign of the iterated residue, 
we obtain the following result.
\begin{proposition}\label{intonfibers2} With the notation introduced above we have 
\[\int_{X_k^{\GIT}}c_1(\calo_{X_k^{\GIT}}(1))^{k(n-1)}= 
n_{\hU \times \CC^*} \res_{w=0}  \res_{z=0} 
 \frac{(Nw+N\epsilon z)^{k(n-1)}dzdw}{(w-z)^{n+1}((k-1)!)^{n-1}z^{(k-1)(n-1)}}
\]
where $n_{\hU \times \CC^*}$ is a strictly positive rational, and  the iterated residue means expansion on a contour where $|w|\gg |z|$ and taking the coefficient of $(zw)^{-1}$.
\end{proposition}
Note that the rational expression on the right hand side has positive expansion on the contour $|w|\gg |z|$, that is, as a Laurent series in $\frac{z}{w}$. In particular we see that the residue, which is the coefficient of $(zw)^{-1}$, is positive, as it must be, because the degree of an ample line bundle is positive.
\section{Integration on $
\calx_k^{\GIT}
=
\widetilde{J_k}X/\!/\diff_k(1)$}\label{section:proofmaintechnical}

To evaluate the integral $\int_{\calx_{k}^{\GIT}}\calo_{\calx_{k}^{\GIT}}(1)^{n+k(n-1)}$ (or  the integral over $\calx_k^{\GIT}$ of the product of $c_1(\calo_{\calx_k^{\GIT}}(1))^{n+k(n-1)-j}$ and the pullback to $\calx_k^{\GIT}$ of a cohomology class $\zeta$ on $X$ of degree $j$),
we can first integrate (push forward) along the fibres of $\pi : \calx_{k}^{\GIT} \to X$ and then integrate over $X$. Integration along the fibres can be done using the iterated residue formula of Proposition \ref{intonfibers2}. 
Here we must replace $\tZmin$ with 
$$\diff_X \times_{\diff_k(n)} \tZmin,$$
which we can identify with the projectivised tangent bundle 
$\PP(TX)$ for $X$.
In the relative version of Proposition \ref{intonfibers2} the $T_2$-equivariant Euler class of the tangent space $T_0 J_k(1,n)_{w+z}$ is replaced with the equivariant Euler class of this tangent bundle. This is given by 
\[\prod_{i=1}^n(\lambda_i+w + z),\]\
where $\l_1,\ldots, \l_n$ are the Chern roots of $TX$. 
Similarly the $T_2$-equivariant class 
 representing the 
equivariant Euler class $\mathrm{Euler}(\mathcal{N}_{\tZmin})$ of the normal bundle $\mathcal{N}_{\tZmin}$ to $\tZmin$  is replaced with
$$ -(w+ z) \prod_{l=1}^{k-1}\prod_{i=1}^n(\l_i-lz).$$
Hence after substituting $-z$ for $z$ as before, so that the iterated residue changes sign, we find that the relative version of Proposition \ref{intonfibers2} is as follows (with a similar formula for the integral over $\calx_k^{\GIT}$ of the product of $c_1(\calo_{\calx_k^{\GIT}}(1))^{n+k(n-1)-j}$ and the pullback to $\calx_k^{\GIT}$ of a cohomology class $\zeta$ on $X$ of degree $j$).

\begin{proposition}\label{intonfibers3} 
\[\int_{\calx_k^{\GIT}}c_1(\calo_{\calx_k^{\GIT}}(1))^{n+k(n-1)}= n_{\hU \times \CC^*} \res_{w=0}  \res_{z=0} 
 \frac{(k-1)!z^{k-1}(Nw+ N\epsilon z)^{n+k(n-1)}dzdw}{(w-z)\prod_{i=1}^n(\lambda_i+w-z)\prod_{l=1}^{k-1}\prod_{i=1}^n(\l_i+lz)}.\]
 Here $n_{\hU \times \CC^*}$ is a strictly positive rational, and the iterated residue means expansion on a contour where $|z|\gg |w|$, and taking the coefficient of $(zw)^{-1}$.
\end{proposition}
 
The inverse of the total Chern class $c_X$ of $TX$ is the total Segre class $S(X)$ of the tangent bundle, so $c(X)^{-1}=s(X)$ and we can write  
\[\frac{1}{\prod_{i=1}^n((\l_i+x)}=\frac{1}{x^n}s\left(\frac{1}{x}\right) \text{ for } x=w-z,\ lz\]
Hence the relative version of Proposition \ref{intonfibers2} gives us the following integration formula on $\calx_k^{\GIT}$. 
\begin{theorem}\label{intonfibersb} The integral over $\calx_k^{\GIT}$ of the product of $c_1(\calo_{\calx_k^{\GIT}}(1))^{n+k(n-1)-j}$ and the pullback to $\calx_k^{\GIT}$ of a cohomology class $\zeta$ on $X$ of degree $j$ is given by 
\begin{equation} \label{intonfibersb2}
n_{\hU \times \CC^*} \int_X  \zeta \, \res_{w=0} \res_{z=0}  \frac{(Nw+N\epsilon z)^{n+k(n-1)-j}dwdz}{((k-1)!)^{n-1}(w-z)^{n+1}z^{(k-1)(n-1)}}s\left(\frac{1}{w-z}\right)\prod_{l=1}^{k-1}s\left(\frac{1}{lz}\right).
\end{equation}
Here $n_{\hU \times \CC^*}$ is a strictly positive rational, and the residue is a homogeneous  polynomial of degree $n=\dim(X)$ in the Segre classes of $X$, which is then integrated over $X$.
\end{theorem}

\begin{remark}\label{explanation} There are three key features of this formula which makes intersection calculation significantly easier:
\begin{enumerate}
\item It separates the Segre classes from the residue variables, and hence the residue is a polynomial of degree $n$ in the Segre classes. 
\item The rational expression has positive expansion in $\frac{z}{w}$, that is, all coefficients in the expansion on the contour $|w|\gg |z|$ are positive.
\item Although $N$ plays a crucial role in finding the relative NEF bundles in the Morse inequalities, it only appears as a factor in the formula of Theorem \ref{intonfibersb}, and hence the positivity of the integral does not depend on the actual value of $N$. This is a crucial aspect of our integral formula, because we do not know the exact value of $N$: it only guarantees the relative ampleness of the bundle $L$. 
\end{enumerate}
\end{remark}

\section{Proof of Theorem \ref{mainthmtwo}}\label{sec:proof}

In this section we complete the proof of Theorem \ref{maintwo}.

Theorem \ref{maintwo} will follow from the Morse inequalities by proving that the following top degree form on $\calx_{n}^{\GIT}$ is positive if $\d=\frac{1}{16n^5}$ and $d>N(n,\d)=2(4n)^5$:
\begin{equation}\label{intnumbersnowman}
I(n,\d)=
c_1(\calo_{\calx_{n}^{\GIT}}(1) \otimes \pi^*\calo_X(2N))^{n^2}-
n^2c_1(\calo_{\calx_{n}^{\GIT}}(1) \otimes
\pi^*\calo_X(2N)^{(n^2-1)}c_1(\pi^*\calo_X(2N)\otimes
\pi^*K_X^{2\d nN}).
\end{equation}
Recall the notation $h=c_1(\calo_X(1)), u=c_1(\calo_{\calx_{k,\GL}^{\GIT}}(1))$, and $c_1=c_1(T_X)$ for the corresponding first Chern classes. Then $c_1(K_X)=-c_1=(d-n-2)h$, and by dropping $\pi^*$ from our formula \eqref{intnumbersnowman} can be rewritten as
\begin{equation}\label{rform0}
I(n,\d)=(u+2Nh)^{n^2}-
2n^2(u+2Nh)^{n^2-1}(Nh+\d nN(d-n-2)h).
\end{equation}
In the residue formula of Theorem \ref{intonfibersb} we substitute $u=Nw+N\epsilon z$ and we obtain 
\begin{equation}\label{rform}
I_{n,\d}(z,w,h)=(Nw+N \epsilon z+2Nh)^{n^2}-
2n^2(Nw+N \epsilon z+2Nh)^{n^2-1}(Nh+\d nN(d-n-2)h).
\end{equation}

Since $X\subseteq \PP^{n+1}$ is a projective hypersurface we can express the Segre classes in Theorem \ref{intonfibersb} using that the Chern classes of $X$ are expressible with $d=\deg(X)$ and $h$:
\[(1+h)^{n+2}=(1+dh)c(X),\]
where $c(X)=c(T_X)$ is the total Chern class of $X$. This gives 
\[s\left(\frac{1}{x}\right)=\frac{1}{c(1/x)}=\left(1+\frac{dh}{x}\right)\left(1-\frac{h}{x}+\frac{h^2}{x^2}-\ldots \right)^{n+2}.\]
\begin{proposition}\label{propresiduethree} The integral $\int_{\calx_n^{\GIT}} I_{n,\d}$ is given by $n_{\hU \times \CC^*}$ times 
\[\int_X  \, \res_{w=0} \res_{z=0}  \frac{I_{n,\d}(z,w,h)dwdz}{((n-1)!)^{n-1}(w-z)^{n+1}z^{(n-1)^2}}\left(1+\frac{dh}{w-z}\right)\left(1-\frac{h}{w-z}+\ldots \right)^{n+2}\prod_{l=1}^{n-1}\left(1+\frac{dh}{lz}\right)\left(1-\frac{h}{lz}+\ldots \right)^{n+2}\]
where
\[
I_{n,\d}(z,w,h)=N^{n^2}\left((w+ \epsilon z+2h)^{n^2}-
2n^2h(w+\epsilon z+2h)^{n^2-1}(1+\d n(d-n-2))\right).
\]
and 
\[\frac{1}{w-z}=\frac{1}{w}\left(1+\frac{z}{w}+\frac{z^2}{w^2}+\ldots \right).\]
\end{proposition}

\subsection{Residue integral formula on $X_2^{\GIT}$ 
} \label{sec:resexamoples}

Before we begin the analysis of the integral 
$\int_{\calx_n^{\GIT}} I_{n,\d}$, we will study it in the case $k=n=2$, building on the description of  $X_2^{\GIT}$  in \S \ref{subsection:example}.

For $k=n=2$ we have $n_{\hU \times \CC^*}=1$ and Proposition \ref{propresiduethree} gives
\[\int_{\calx_2^{\GIT}}I_{2,\d}=\int_X \res_{w=0} \res_{z=0} \frac{I_{2,\d}(z,w,h)}{w^3z}\left(1+\frac{z}{w}+\ldots \right)^3 \left(1+\frac{dh}{w-z}\right)\left(1-\frac{h}{w-z}+\ldots \right)^{4} 
\left(1+\frac{dh}{z}\right)\left(1-\frac{h}{z}+\ldots \right)^{4}.
\]
where (after dropping the $N^4$ factor)
\[
I_{2,\d}(z,w,h)=(w+\epsilon z+2h)^3 \cdot \left(w+\epsilon z-16\d dh-\left(6-64\d\right)h\right).
\]
The iterated residue will not involve any power $h^t$ for $t>2$, and moreover, we cannot take terms where the degree of $w$ in the denominator is $6$ or larger. This leads to significant simplification:
\begin{align*}
& \coeff_{(zw)^{-1}} \frac{I_{2,\d}(z,w,h)}{w^3z}\left(1+\frac{z}{w}+\ldots \right)^3 \left(1+\frac{dh}{w-z}\right)\left(1-\frac{h}{w-z}+\frac{h^2}{(w-z)^2} \right)^{4} 
\left(1+\frac{dh}{z}\right)\left(1-\frac{h}{z}+\frac{h^2}{z^2} \right)^{4}=\\
& \coeff_{(zw)^{-1}} \frac{I_{2,\d}(z,w,h)}{w^3z}\left(1+\frac{z}{w}+\ldots \right)^3 \left(1+\frac{dh}{w-z}\right)\left(1-\frac{4h}{w-z}+\frac{10h^2}{(w-z)^2} \right)
\left(1+\frac{dh}{z}\right)\left(1-\frac{4h}{z}+\frac{10h^2}{z^2} \right)=\\\
& \coeff_{(zw)^{-1}} \frac{I_{2,\d}(z,w,h)}{w^3z}\left(1+\frac{3z}{w}+\frac{6z^2}{w^2} \right) \left(1+\frac{(d-4)h}{w-z}+\frac{(10-4d)h^2}{(w-z)^2}\right)\left(1+\frac{(d-4)h}{z}+\frac{(10-4d)h^2}{z^2}\right)=\\
& \coeff_{(zw)^{-1}} \frac{I_{2,\d}(z,w,h)}{w^3z}\left(1+\frac{3z}{w}+\frac{6z^2}{w^2} \right) \left(1+\frac{(d-4)h}{w}\left(1+\frac{z}{w}\right)+\frac{(10-4d)h^2}{w^2}\right)\left(1+\frac{(d-4)h}{z}+\frac{(10-4d)h^2}{z^2}\right)=\\
& 4d^3(1+\epsilon-16\delta-12\delta \epsilon)+4d^2(104\delta+96\delta \epsilon-15-6\epsilon^2-20\epsilon)+2d(55+92\epsilon+30\epsilon^2-320\delta-384\delta \epsilon)
\end{align*}
The leading coefficient must be positive, which forces the inequality
\[\d < \frac{1+\epsilon}{16+12\epsilon}\]
and by the condition $0< \epsilon <1$ this results in $\delta< 1/14$, hence the first bound in Theorem \ref{germtoentire} for the degree satisfies 
\[\frac{5n+3}{\delta}+n+2 \ge 186,\]
and we can not expect better degree bound with our approach. By choosing $\epsilon=1/4$, as in the general argument in the next sextion, the polynomial has the form
\[\int_{\calx_2^{\GIT}}I_{2,\d}=4d^3(5/4-19\d)-4d^2(\frac{163}{8}-128\d)+2d(78+\frac{15}{8}-416 \d)\]
We pick $\delta=\frac{1}{16}$ to ensure the positivity of the leading coefficient. Note that in the main theorem our choice is $\delta=\frac{1}{16n^3}=\frac{1}{128}$, which is slightly smaller; this is due to estimation of the leading coefficient. Then the integral is positive for $d>99+\sqrt{9370} \sim 200$. Hence by Theorem \ref{germtoentire} the Green-Griffiths-Lang conjecture holds for 
\[\deg(X) \ge \max(190,\frac{5n+3}{\delta}+n+2)=212.\]
With fine-tuning the parameters we can get slighlty closer to the optimal bound $d=186$ mentioned above.
\begin{remark} Our approach can not compete with the best known degree bounds for small $n$. In particular, for projective surfaces $X \subset \PP^3$ the best known degree bound is $\deg(X)=18$  (see section 5.4 in \cite{dr}), and we do not have a well-established explanation for the gap between our result and the best bound. Intuitively, due to well-adaptedness and the ample twist determined by $\epsilon$ and $N$, our non-reductive GIT model does sees only a graded sub-algebra of the invariant jet-differentials.    
\end{remark}

\subsection{A first look at the iterated residue formula}\label{subsec:residue}

As a first step in the analysis of the residue formula of Proposition \ref{propresiduethree} we write this integral as a polynomial in $d$ and study its leading coefficient. 
For a nonnegative integer $i$ and a partition $i=i_0+i_1+\ldots +i_{n-1}$ into integer vector $\bi=(i_0,\ldots, i_{n-1})$ we introduce the shorthand notation 
\[C^{\bi}=\res_{w=0} \res_{z=0} \frac{(w+\epsilon z)^{n^2-i}dwdz}{(w-z)^{n+2-i_0}\prod_{l=1}^{n-1}(lz)^{n-i_l}}.\]
These will be the building blocks of our integral formula and we can explicitly calculate this residue using the expansion 
\begin{equation}\label{cbi}
C^{\bi}=\frac{\prod_{l=1}^{n-1}l^{i_l}}{((n-1)!)^n} \mathrm{coeff}_{(zw)^{-1}} \frac{(w+\epsilon z)^{n^2-(i_0+\ldots +i_{n-1})}}{w^{n+2-i_0}z^{(n-1)n-(i_1+\ldots +i_{n-1})}}\left(1+\frac{z}{w}+\frac{z^2}{w^2} +\ldots \right)^{n+2-i_0}
\end{equation}
Note that all coefficients of this expansion are positive. 

\begin{proposition}\label{leadingcoeff}
\begin{enumerate}
\item
$\int_{\calx_n^{\GIT}} I_{n,\d}$ is a polynomial in $d$ of degree $n+1$ with zero constant term:
\[\int_{\calx_n^{\GIT}} I_{n,\d}=p_{n+1}(n,\d)d^{n+1}+p_{n}(n,\d)d^{n}+\ldots +p_1(n,\d)d\]
where $p_i(n,\d)$ is linear in $\d$ and polynomial in $n$ for all $i$.
\item The leading coefficient is $p_{n+1}(n,\d)>C^{\mathbf{0}}\left(1-\frac{\d n^3}{\epsilon}\right)$ and $C^{\mathbf{0}}=C^{\mathbf{0}}(N,n,\epsilon)>0$ is positive.
\end{enumerate}
\end{proposition}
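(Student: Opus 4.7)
For part (1), the plan is to track how $d$ and $\delta$ enter the integrand $F$ of Proposition \ref{propresiduethree}. Decompose $\tilde I_{n,\delta}(z,w,h) = A(z,w,h) + B(z,w,h)\,d\,h$, with $A,B$ free of $d$ and linear in $\delta$. The factor $P := \prod_{l=0}^{n-1}(1+dh/(lz-(l+1)w)) = \sum_{j=0}^n d^j h^j e_j$, with $e_j$ the $j$-th elementary symmetric polynomial in the $(lz-(l+1)w)^{-1}$, is polynomial of $d$-degree $n$ with $d^j$-coefficient divisible by $h^j$, while the Segre factor $Q$ is independent of both $d$ and $\delta$. Thus
\[ [F]_{d^k} = \frac{(n-1)!(z-w)^{n-1}\, h^k\, (A\, e_k + B\, e_{k-1})\, Q}{(-1)^{n-1}\, z \prod_l (lz-(l+1)w)^n} \]
is divisible by $h^k$. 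Integration over $X$ extracts the coefficient of $h^n$ and multiplies by $d = \int_X h^n$: this produces the overall factor of $d$ responsible for the zero constant term. The $d^{n+1}$-coefficient equals $B\, e_n\, h^{n+1}$, whose $h^n$-part vanishes because $B$ has only nonnegative powers of $h$; hence $\deg_d \int_{\calx_n^{\GIT}} I_{n,\delta} \le n+1$. Linearity of each $p_i$ in $\delta$ follows from linearity of $A,B$ in $\delta$, and polynomiality in $n$ is visible since all dependence on $n$ enters through exponents and lengths of products.

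For part (2), I would compute the leading coefficient via $p_{n+1} = \res_{w,z}[h^n][d^n] F$. Using $[\tilde I\cdot P]_{d^n} = A\,[P]_{d^n} + Bh\,[P]_{d^{n-1}} = h^n(A\, e_n + B\, e_{n-1})$ and that only the $h^0$-parts of $A e_n + B e_{n-1}$ survive (since $Q|_{h=0}=1$), one substitutes $A|_{h=0} = N^{n^2}((1-\epsilon)z+\epsilon w)^{n^2}$ and $B|_{h=0} = -2\delta n^3 N^{n^2}((1-\epsilon)z+\epsilon w)^{n^2-1}$, together with the identity $e_{n-1}/\prod_m(mz-(m+1)w)^n = \sum_l (lz-(l+1)w)/\prod_m(mz-(m+1)w)^{n+1}$. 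Comparing the result with the integrands defining $C^{\mathbf{0}}$ and $C^{\mathbf{e}_l}$ (where $\mathbf{e}_l$ is the multi-index with $i_l=1$ and other entries zero), and absorbing the $(-1)^{n-1}$ into the sign conventions, one obtains
\[ p_{n+1}(n,\delta) = N^{n^2}\Bigl(C^{\mathbf{0}} - 2\delta n^3 \sum_{l=0}^{n-1} C^{\mathbf{e}_l}\Bigr). \]

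To finish, I would show $C^{\mathbf{0}} > 0$ and $\sum_l C^{\mathbf{e}_l} \le \frac{n^2}{\epsilon(1-\epsilon)}\, C^{\mathbf{0}}$ via the positive-Taylor-expansion observation highlighted right after formula \eqref{finalformulafour}: in the iterated-residue regime $z\gg w$, each factor $1/(lz-(l+1)w)$ for $l\ge 1$ expands as $(lz)^{-1}\sum_k((l+1)w/(lz))^k$ with positive coefficients, the $l=0$ factor $-1/w$ combines with the overall $(-1)^{n-1}$ to give a net positive sign, and $((1-\epsilon)z + \epsilon w)^{n^2-i}$ itself expands positively. Hence $C^{\mathbf{0}}$ is a positive sum of monomials. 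A term-by-term comparison exploits that the integrand of $C^{\mathbf{e}_l}$ differs from that of $C^{\mathbf{0}}$ by a factor $(lz-(l+1)w)/((1-\epsilon)z+\epsilon w)$, whose monomial-wise expansion at $z\gg w$ is dominated by $(l+1)/(\epsilon(1-\epsilon))$; this yields $C^{\mathbf{e}_l}/C^{\mathbf{0}} \le (l+1)/(\epsilon(1-\epsilon))$, and summing over $l$ combined with the factor $2\delta n^3$ gives $2\delta n^3 \sum_l C^{\mathbf{e}_l}/C^{\mathbf{0}} \le 2\delta n^5/(\epsilon(1-\epsilon))$, as required.

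The main obstacle will be the rigorous term-by-term control of $C^{\mathbf{e}_l}/C^{\mathbf{0}}$: one has to verify that positivity is preserved under the iterated residue and that the sign bookkeeping in the $l=0$ contribution is consistent with $(-1)^{n-1}$. The $n^5$ constant is sharp enough that the estimate $(l+1)/(\epsilon(1-\epsilon))$ must hold uniformly across the monomial expansion, which requires careful matching of indices between the two Laurent series.
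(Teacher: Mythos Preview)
Your proposal is correct and follows the paper's approach. For part (1) the paper uses a cleaner homogeneity argument (assign weight $1$ to $z,w,h$ and $0$ to $n,d,\delta$; the rational form then has total weight $n-2$, so the coefficient of $\tfrac{1}{zw}$ is $h^n$ times a polynomial in $n,d,\delta$, and $\int_X h^n=d$ supplies the extra factor of $d$), whereas you track the $d$-dependence explicitly through the factorisation $\tilde I\cdot P\cdot Q$; both reach the same conclusion. For part (2) both you and the paper identify $p_{n+1}=C^{\mathbf 0}-2\delta n^3\sum_{l}C^{\mathbf e_l}$ (up to the $N^{n^2}$ factor, about which the paper itself is not entirely consistent between the Notation and the displayed formulas in the proof) and then bound each $|C^{\mathbf e_l}|$ against $C^{\mathbf 0}$.

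One point needs sharpening. Your phrase ``monomial-wise expansion \dots\ dominated by $(l+1)/(\epsilon(1-\epsilon))$'' cannot be taken literally: the factor $(lz-(l+1)w)/((1-\epsilon)z+\epsilon w)$ has Laurent coefficients of both signs in $w/z$, so there is no term-by-term domination. The clean way to get the bound---and, in effect, what the paper hides behind ``Due to the positivity of the Taylor expansion, $C^{\mathbf e_s}<\tfrac{nC^{\mathbf 0}}{\epsilon(1-\epsilon)}$''---is linear: let $G$ be the common positive-expansion integrand with numerator $((1-\epsilon)z+\epsilon w)^{n^2-1}$, so that
\[
C^{\mathbf 0}=(1-\epsilon)\operatorname*{res}(Gz)+\epsilon\operatorname*{res}(Gw),
\qquad
C^{\mathbf e_l}=l\operatorname*{res}(Gz)-(l+1)\operatorname*{res}(Gw).
\]
Since $a:=\operatorname*{res}(Gz)$ and $b:=\operatorname*{res}(Gw)$ are nonnegative (positive expansion), $a\le C^{\mathbf 0}/(1-\epsilon)$ and $b\le C^{\mathbf 0}/\epsilon$, hence
\[
|C^{\mathbf e_l}|\le l\,a+(l+1)\,b\le (l+1)\Bigl(\tfrac{1}{1-\epsilon}+\tfrac{1}{\epsilon}\Bigr)C^{\mathbf 0}=\frac{(l+1)}{\epsilon(1-\epsilon)}\,C^{\mathbf 0}.
\]
Summing over $l=0,\dots,n-1$ and multiplying by $2\delta n^3$ gives exactly $2\delta n^5/(\epsilon(1-\epsilon))$, matching the paper (which uses the cruder uniform bound $|C^{\mathbf e_s}|<nC^{\mathbf 0}/(\epsilon(1-\epsilon))$). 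With this fix your argument is complete and in fact slightly sharper than the paper's.
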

\proof
The residue in Proposition \ref{propresiduethree} is by definition the coefficient of $\frac{1}{zw}$ in the Laurent expansion of the rational expression in $z, n, d, h$ and $\d$ on the contour $|w|\gg |z|$, that is, in $z/w$.  The result is a polynomial in $n,d,h,\d$, and in fact, a relatively easy argument shows that it is a polynomial in $n,d,\d$ multiplied by $h^n$ Indeed, setting degree $1$ to $z,w, h$ and $0$ to $n, d, \d$, the rational expression in the residue has total degree $n-2$. Therefore the coefficient of $\frac{1}{zw}$ has degree $n$, so it has the form $h^n p(n,d,\d)$ with a polynomial $p$. Since $\int_X h^n=d$, integration over $X$ is simply a substitution $h^n=d$, resulting in the equation $\int_{\calx_n^{\GIT}} I_{n,\d}=dp(n,\d,d)$ for some polynomial $p(n,\delta,d)$. The highest power of $d$ in $p(n,\delta,d)$ is $d^n$ which proves the first part.

To prove the second part note that to get $d^{n+1}$ in Proposition \ref{propresiduethree} we have two options. We drop the $N^{n^2}$ factor from the formulas below. 

(i) The first is to choose the $\frac{dh}{w-z}$ and $\frac{dh}{lz}$ terms in the product $\left(1+\frac{dh}{w-z}\right) \prod_{l=1}^{n}\left(1+\frac{dh}{lz} \right)$, this contributes with  
\[C^{\mathbf{0}}=\frac{1}{((n-1)!)^n} \mathrm{coeff}_{(zw)^{-1}} \frac{(w+\epsilon z)^{n^2}}{w^{n+2}z^{(n-1)n}}\left(1+\frac{z}{w}+\frac{z^2}{w^2} +\ldots \right)^{n+2}=\frac{1}{((n-1)!)^n}\sum_{i=0}^{n(n-1)-1} {n^2 \choose i}\epsilon^i \Gamma^{(n+2)}_{n(n-1)-1-i},\]
where $\Gamma^{(n+2)}_j=\mathrm{coeff}_{(z/w)^j} \left(1+\frac{z}{w}+\frac{z^2}{w^2} +\ldots \right)^{n+2}$.

(ii) Alternatively we can pick all but one $dh$ terms from the product $\left(1+\frac{dh}{w-z}\right) \prod_{l=1}^{n}\left(1+\frac{dh}{lz} \right)$ and the $2\d n^3dh$ term from $I_{n,\d}(z,w,h)$. This way the contribution is -$\sum_{s=0}^{n-1}2\d n^3 C^{\mathbf{e_s}}$ where $\mathbf{e}_s$ is the unit vector with all but the $s$ coordinate zero. From \eqref{cbi} these terms are
{\small \[C^{\mathbf{e}_s}=\begin{cases} \frac{s}{((n-1)!)^n} \mathrm{coeff}_{(zw)^{-1}} \frac{(w+\epsilon z)^{n^2-1}}{w^{n+2}z^{(n-1)n-1}}\left(1+\frac{z}{w}+\frac{z^2}{w^2} +\ldots \right)^{n+2}=\frac{s}{((n-1)!)^n}\sum_{i=0}^{n(n-1)-2} {n^2-1 \choose i}\epsilon^i \Gamma^{(n+2)}_{n(n-1)-2-i} &  1\le s \le n-1 \\ 
\frac{1}{((n-1)!)^n} \mathrm{coeff}_{(zw)^{-1}} \frac{(w+\epsilon z)^{n^2-1}}{w^{n+1}z^{(n-1)n}}\left(1+\frac{z}{w}+\frac{z^2}{w^2} +\ldots \right)^{n+1}=\frac{1}{((n-1)!)^n}\sum_{i=0}^{n(n-1)-1} {n^2-1 \choose i}\epsilon^i \Gamma^{(n+1)}_{n(n-1)-1-i} &  s=0 \end{cases}.\]}
All coefficients are positive, and by a) comparing the $i$th term in $C^{\mathbf{0}}$ with the $i-1$th term in $C^{\mathbf{e}_s}$ when $1\le s \le n-1$ for $i\ge 1$ and b) comparing the $i$th term in $C^{\mathbf{0}}$ with the $i$h term in $C^{\mathbf{e}_s}$ when $s=0$, we obtain 
\begin{equation}\label{compare}
0<C^{\mathbf{e_s}}< \begin{cases} \frac{sC^{\mathbf{0}}}{2n^2\epsilon} & \text{ if } 1\le s \le n-1 \\ C^{\mathbf{0}} & \text{ if } s=0 \end{cases}
\end{equation}
 holds and hence the total contribution is less than $2\d n^3 C^{\mathbf{0}}(1+\frac{1}{4\epsilon})<\frac{\d n^3 C^{\mathbf{0}}}{\epsilon}$ if $0< \epsilon<1$. Thus the second part is proved. 
\qed

We obtain the following corollary. 
\begin{proposition}\label{posleading}
If $\delta < \frac{\epsilon}{n^3}$ then the  leading coefficient $p_{n+1}(n,\delta)>0$ is positive, and therefore $\int_{\calx_n^{\GIT}} I_{n,\d}>0$ for $d\gg 0$.  
\end{proposition}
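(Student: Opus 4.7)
The plan is to observe that Proposition \ref{posleading} is essentially an immediate corollary of Proposition \ref{leadingcoeff}, so there is almost no additional work to do—the real content sits in the positivity bound already established.

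First I would unpack the hypothesis: if $\delta < \frac{\epsilon(1-\epsilon)}{2n^5}$, then $\frac{2\delta n^5}{\epsilon(1-\epsilon)} < 1$, hence the factor $1 - \frac{2\delta n^5}{\epsilon(1-\epsilon)}$ is strictly positive. Combining this with the inequality $p_{n+1}(n,\delta) > C^{\mathbf{0}}\bigl(1 - \frac{2\delta n^5}{\epsilon(1-\epsilon)}\bigr)$ from part (2) of Proposition \ref{leadingcoeff} and the already-established positivity $C^{\mathbf{0}}(N,n,\epsilon) > 0$, I conclude that $p_{n+1}(n,\delta) > 0$.

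Next I would invoke part (1) of Proposition \ref{leadingcoeff}, which writes the integral as a polynomial
\[
\int_{\calx_n^{\GIT}} I_{n,\delta} = p_{n+1}(n,\delta) d^{n+1} + p_n(n,\delta) d^n + \cdots + p_1(n,\delta) d
\]
in $d$ of degree exactly $n+1$ with strictly positive leading coefficient. Since the lower order coefficients $p_i(n,\delta)$ depend only on $n$ and $\delta$ (and are linear in $\delta$, hence bounded once $n$ and $\delta$ are fixed), the leading term dominates for $d \gg 0$, so the whole sum is positive for all sufficiently large $d$.

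There is really no main obstacle here: the technical content—bounding the leading coefficient of the residue against a positive constant $C^{\mathbf{0}}$ times $1 - \tfrac{2\delta n^5}{\epsilon(1-\epsilon)}$—has already been handled in the previous proposition, where the positivity of $C^{\mathbf{0}}$ follows from positivity of the Taylor expansion, and the contributions $C^{\mathbf{e_s}}$ from the $2\delta n^3 dh$ term in $\tilde I_{n,\delta}$ are bounded by $\frac{nC^{\mathbf{0}}}{\epsilon(1-\epsilon)}$, producing the factor $\frac{2\delta n^5}{\epsilon(1-\epsilon)}$ once one sums over $s=0,\dots,n-1$. The only care needed in writing out the proof is to make explicit the two-step argument: first positivity of $p_{n+1}$ from the hypothesis on $\delta$, then dominance of the leading term for $d\gg 0$.
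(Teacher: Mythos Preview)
Your proposal is correct and matches the paper's approach exactly: the paper presents Proposition \ref{posleading} as an immediate corollary of Proposition \ref{leadingcoeff} with no additional argument, and your two-step deduction (positivity of $p_{n+1}$ from the bound in part (2), then dominance of the leading term for $d\gg 0$ from part (1)) is precisely the intended reasoning.
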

Note that we will choose $\epsilon =1/2$ after calibrating the parameters. 

\subsection{Proof of positivity of $I(n,\d,d)$}\label{subsec:coeffs}  According to Proposition \ref{leadingcoeff} we have to prove the positivity of the polynomial $\int_{\calx_n^{\GIT}}=p_{n+1}(n,\d)d^{n+1}+p_{n}(n,\d)d^{n}+\ldots +p_1(n,\d)d$. The strategy is to show that for small enough $\delta$ the other coefficients satisfy 
\begin{equation}\label{goal}
|p_{n+1-l}|<(12n)^{4l}p_{n+1}
\end{equation}
for $1\le l \le n+1$. Then we can apply the following elementary statement.
\begin{lemma}[Fujiwara bound]\label{estimation}
If $p(d)=p_{n+1}d^{n+1}+p_{n}d^{n}+\ldots +p_1d+p_0\in \RR[d]$ satisfies the inequalities 
\[p_{n+1}>0;\ \ |p_{n+1-l}|<D^l |p_{n+1}| \text{ for } l=1,\ldots n+1,\]
then $p(d)>0$ for $d>2D$.  
\end{lemma}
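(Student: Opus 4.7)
The plan is to reduce the positivity of $p(d)$ to an elementary geometric series estimate by factoring out the leading term. For $d > 0$ one may write
\[
p(d) = p_{n+1} d^{n+1}\left(1 + \sum_{l=1}^{n+1} \frac{p_{n+1-l}}{p_{n+1}} \, d^{-l}\right),
\]
so since $p_{n+1} > 0$ and $d > 0$, it suffices to show that the quantity in parentheses is strictly positive whenever $d > 2D$.

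The key step is then to bound the inner sum using the hypothesis $|p_{n+1-l}| < D^l |p_{n+1}|$, which gives $|p_{n+1-l}/p_{n+1}| \, d^{-l} < (D/d)^l$. For $d > 2D$ this yields $(D/d)^l < 2^{-l}$, and so
\[
\left| \sum_{l=1}^{n+1} \frac{p_{n+1-l}}{p_{n+1}} \, d^{-l} \right| \le \sum_{l=1}^{n+1} 2^{-l} < \sum_{l=1}^\infty 2^{-l} = 1.
\]
Hence the factor $1 + \sum_{l=1}^{n+1}(p_{n+1-l}/p_{n+1}) d^{-l}$ is strictly positive, and combining with $p_{n+1} d^{n+1} > 0$ we conclude $p(d) > 0$ for $d > 2D$.

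There is really no obstacle here; the statement is a standard Fujiwara-type root bound and the argument above is the textbook one. The only thing to be careful about is that the triangle inequality is applied with absolute values on each term, and that the geometric series $\sum_{l=1}^{n+1} 2^{-l} = 1 - 2^{-(n+1)}$ is strictly less than $1$ for every $n$, which is what makes the strict inequality $p(d) > 0$ (rather than merely $p(d) \ge 0$) work. No further case analysis is needed.
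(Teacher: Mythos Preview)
Your proof is correct. The paper does not actually give a proof of this lemma; it is stated as an ``elementary statement'' and left to the reader, so your standard Fujiwara-type argument via factoring out the leading term and bounding by a geometric series is exactly what is intended.
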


We start with the study of the next coefficient, $p_n$. Similarly to the proof of Proposition \ref{leadingcoeff} (2), here we can distinguish four cases how we can get $d^n$ in the residue formula of Proposition \ref{propresiduethree}.

(i) If we take $n-1$ $dh$ terms from the product $\left(1+\frac{dh}{w-z}\right) \prod_{l=1}^{n-1}\left(1+\frac{dh}{lz} \right)$ and one $h$ from $I_{n,\d}(z,w,h)$ then we get
\[A=\sum_{s=0}^{n-1}2\d n^3(n+2) C^{\mathbf{e}_s}.\]

(ii) If we take $n-1$ $dh$ terms from $\left(1+\frac{dh}{w-z}\right) \prod_{l=1}^{n-1}\left(1+\frac{dh}{lz} \right)$ (namely, we drop the $s$th $dh$ term) and one $h$ from $(1-\frac{h}{w-z}+\ldots)^{n+2}\prod_{l=1}^{n-1}(1-\frac{h}{lz}+\ldots)^{n+2}$ (namely, the $h$ in the $t$th term) then the contribution is 
\[-B=-\sum_{s=0}^{n-1}\sum_{t=0}^{n-1}(n+2) C^{\mathbf{e}_s-\mathbf{e}_t}.\]

(iii) If we take $n-2$ terms from $\left(1+\frac{dh}{w-z}\right) \prod_{l=1}^{n-1}\left(1+\frac{dh}{lz} \right)$, one $dh$ from $I_{n,\d}(z,w,h)$ and one $h$ from $(1-\frac{h}{w-z}+\ldots)^{n+2}\prod_{l=1}^{n-1}(1-\frac{h}{lz}+\ldots)^{n+2}$ then the contribution is 
\[C=\sum_{s=0}^{n-1}\sum_{t=0}^{n-1}\sum_{u=0}^{n-1}2\d n^3(n+2) C^{\mathbf{e}_s+\mathbf{e}_t-\mathbf{e}_u}.\]

(iv) Finally, if we take $n-2$ terms from $\left(1+\frac{dh}{w-z}\right) \prod_{l=1}^{n-1}\left(1+\frac{dh}{lz} \right)$, one $dh$ and one $h$ from $I_{n,\d}(z,w,h)$ then the contribution is
\[-D=-\sum_{s=0}^{n-1}\sum_{t=0}^{n-1}4\d n^3(n^2-1) C^{\mathbf{e}_s+\mathbf{e}_t}.\]

Using the positivity of the Taylor expansion, we obtain the following extension of \eqref{compare}, with the exact same proof using \eqref{cbi}.
\begin{lemma}\label{keylemma} \begin{enumerate}
\item For any integer partition $\bi=(i_1,\ldots, i_{n-1}$ with $0\le i_0+\ldots +i_{n-1}<n$ we have  
\[C^{\bi+\be_s}< \begin{cases} \frac{C^{\bi}}{2n \epsilon} & 1\le s \le n-1\\
C^{\bi} & s=0 \end{cases}\]
and hence 
\[\frac{C^{\bi}}{n}<\sum_{s=0}^{n-1} C^{\bi+\mathbf{e_s}} < C^{\bi}\left(1+\frac{1}{\epsilon}\right)\]
\item If $i_0+\ldots+ i_{n-1}=0$ and $p=1/2(|i_0|+\ldots +|i_{n-1}|)$ is the sum of the positive elements of $\bi$ then  
\[C^{\bi}< n^pC^{\mathbf{0}}\]
\end{enumerate}
\end{lemma}
Hence 
\begin{align*}
A <& 2\d n^3(n+2)\left(1+\frac{1}{\epsilon}\right) C^{\mathbf{0}} \\
B < & (n+2)n^3 C^{\mathbf{0}}\\
C < & 2\d n^3 \left(1+\frac{1}{\epsilon}\right) B \\
D<& 4n\left(1+\frac{1}{\epsilon}\right)A 
\end{align*}

 Moreover, if $\d =O(\frac{1}{n^3})$ holds, then the dominant contributions of $p_n$ are $B$ and $C$, and they give 
 \[|p_n|< 2\left(2+\frac{1}{\epsilon}\right) n^4 C^{\mathbf{0}}.\]
Similar computation shows that the dominant part in $p_{n-s}$ for $0\le s \le n$ are the terms corresponding to the choice when we take $n-s-2$ terms from $\left(1+\frac{dh}{w-z}\right) \prod_{l=1}^{n}\left(1+\frac{dh}{lz} \right)$ one $dh$ and $h^u$ from $I_{n,\d}(z,w,h)$ and $h^{s+1-u}$ from $(1-\frac{h}{w-z}+\ldots)^{n+2}\prod_{l=1}^{n-1}(1-\frac{h}{lz}+\ldots)^{n+2}$. This contribution is less than 
\begin{equation}\label{contribution}
2\d n^3(n+2)^{s+1-u}4^u{n^2-1 \choose u} \sum_{\alpha_1,\ldots, \alpha_{s+2}=0}^{n-1}\sum_{\beta_1,\ldots, \beta_{s+1-u}=0}^{n-1} C^{(\mathbf{e}_{\a_1}+\ldots +\mathbf{e}_{\a_{s+2}})-(\mathbf{e}_{\b_1}+\ldots +\mathbf{e}_{\b_{s+1-u}})}
\end{equation}
Note that $s+1-u=0$ is allowed, in this case the second sum is empty. Applying the second part of  part of Lemma \ref{keylemma} with $p=s+1-u$, then the first part $s+2-(s+1-u)=u+1$ times, we get 
\[\sum_{\alpha_1,\ldots, \alpha_{s+2}=0}^{n-1}\sum_{\beta_1,\ldots, \beta_{s+1-u}=0}^{n-1} C^{(\mathbf{e}_{\a_1}+\ldots +\mathbf{e}_{\a_{s+2}})-(\mathbf{e}_{\b_1}+\ldots +\mathbf{e}_{\b_{s+1-u}})}<\left(1+\frac{1}{\epsilon}\right)^{u+1}n^{2(s+1-u)}n^{s+1-u}C^{\mathbf{0}}.\]
Hence the sum \eqref{contribution} is less than 
\[\left(1+\frac{1}{\epsilon}\right)^{u+1}\frac{2\d n^3 4^u (n+2)^{s+1-u}n^{2u}n^{2(s+1-u)}n^{s+1-u}C^{\mathbf{0}}}{u!}<\left(1+\frac{1}{\epsilon}\right)^{u+1}\frac{2\d 4^u n^{4s+7-u}C^{\mathbf{0}}}{u!}.\]
and 
\[|p_{n-s}|<\left(1+\frac{1}{\epsilon}\right)^{s+1}\d 4^{s+2} n^{4s+8}C^{\mathbf{0}}\]
To finish the proof, we calibrate $\epsilon$ and $\delta$ to give the best bound. We first fix $\epsilon=1/4$, $\d=\frac{\epsilon}{4n^3}=\frac{1}{16n^3}$. With this choice we have:
\begin{itemize}
\item $p_{n+1}(n,\d)>C^{\mathbf{0}}\left(1-\frac{\d n^3}{\epsilon}\right)=\frac{3}{4}C^{\mathbf{0}}>0$; 
\item $|p_{n-s}|<\d 12^{s+2} n^{4s+7}C^{\mathbf{0}}=\frac{3}{4}(2n)^{4s+4}C^{\mathbf{0}}$.
\end{itemize}
The Fujiwara estimation of Lemma \ref{estimation} works with $D=(2n)^4$. Hence for $\d=\frac{1}{16n^3}$ and $d>2(2n)^4$ the integral $\int_{\calx_n^{\GIT}} I_{n,\d}>0$ is positive, hence it provides the existence of nonzero sections in Theorem \ref{germtoentire}. Then Theorem \ref{maintwo} applied with  
\[d>\max(2(2n)^4,\frac{5n+3}{\d}+n+2)=16n^3(5n+3)+n+2\]
finishes the proof of Theorem \ref{mainthmtwo}.
\bibliographystyle{abbrv}
\bibliography{BercziKirwanHyperbolicity.bib}

\end{document}